\newtheorem{lemma}{Lemma}[section]
 \newtheorem{theorem}{Theorem}[section]
\begin{document}
\title{Graphs with second largest eigenvalue less than $1/2$}
\author{{Xiaoxia Wu$^{a}$\footnote{Corresponding author: xx\_wu4023@126.com},~~Jianguo Qian$^{b}$,~~Haigen Peng$^{a}$}\\
{\small $^{a}$School of Mathematics and Statistics, Minnan Normal University, Fujian 363000, China}\\
{\small $^{b}$School of Mathematical Sciences, Xiamen University,
Fujian 361005, China}}
\date{} \maketitle
\begin{abstract} We characterize the simple connected graphs with the second
largest eigenvalue less than 1/2, which consists of 13 classes of specific graphs. These 13 classes  hint that $c_{2}\in [1/2, \sqrt{2+\sqrt{5}}]$, where $c_2$ is the minimum real number $c$ for which every real number greater than $c$ is a limit point in the set
of the second largest eigenvalues of the simple connected graphs. We leave it as a problem.
\end{abstract}
\noindent {\bf Keywords:} { Adjacency matrix; Second largest
eigenvalue;
Induced subgraph.}\\
\noindent {\bf AMS classification:} 05C50

\section{ Introduction}
The second largest eigenvalue is one of the particularly concerned
eigenvalues in the theory of graph spectra.  In application, the
second largest eigenvalue has close relations with the hyperbolic
geometry in Lorentz space $R^{p,1}$ \cite{CGSS,NS}, equiangular
lines of elliptic geometry in Euclidean space $R^{p}$
\cite{LS,JTYZZ} and, also the expander in theoretical computer
science \cite{A2}.

As pointed by Cvetkovi\'c and Simi\'c \cite{CS2}, the graphs with
small second largest eigenvalue $\lambda_2$ may have interesting
structural properties.
  In earlier seventies, using the fact that $\lambda_2(H)\leq \lambda_2(G)$ for any induced subgraph $H$ of a graph $G$ (the hereditary property),
  Howes studied the second largest eigenvalue not more than a constant by considering the forbidden induced subgraphs \cite{H}.
  In particular,  Hoffman proposed the problem of characterizing graphs with second largest eigenvalue at most 1, which was considered earlier by Cvetkovi\'c \cite{C2}. Later in
\cite{P}, Petrovi\'c characterized the connected bipartite graphs
with $\lambda_{2}\leq 1$. The trees, unicyclic, bicyclic and
tricyclic graphs with $\lambda_{2}\leq 1$ were determined in
\cite{G,HY,LY,X}, respectively. In \cite{CGK}, the connected graphs
with exactly three distinct eigenvalues and second largest
eigenvalue at most 1 were classified by Cheng et al. Recently, Liu
et al. \cite{LCS} determined all connected $\{K_{1,3},
K_{5}-e\}$-free graphs with $\lambda_{2}\leq 1$.

 In addition to the graphs with $\lambda_{2}\leq 1$, the graphs with $\lambda_{2}$ less than some other smaller constants also receive particular
 attention
 in the literature \cite{CH1,CH3,CGK,CS}. The graphs with  $\lambda_{2}\leq \sqrt{2}-1$ were determined independently by Petrovi\'c \cite{P2} and Li \cite{L}.
 In  \cite{CH1},  Cao and Yuan  characterized the simple graphs with $\lambda_{2}< 1/3$ and further proposed the problem of characterizing the connected graphs
 with $1/3<\lambda_{2}< ({\sqrt{5}-1})/{2}$. Using the hereditary property, this problem was considered by Cvetkovi\'c and Simi\'c  \cite{CS} from the view point of forbidden induced subgraphs. Till now, the problem still remains open in general.

In this paper we characterize the simple connected graphs with the
second largest eigenvalue less than 1/2 (Theorem \ref{main}), which consists of 13 classes of specific graphs. Our result implies that
$1/2$ is a limit point in $A_2$, where $A_2$ is the set of the
second largest eigenvalues of the simple graphs without isolated
vertex. On the other hand, it was shown that $c_2\in
[\sqrt{2}-1,\sqrt{2+\sqrt{5}}]$ \cite{ZC}, where $c_2$ is the minimum real number $c$ for which every real
number greater than $c$ is a limit point of $A_2$.  Our 13 classes of specific graphs hint that $c_{2}\in [1/2, \sqrt{2+\sqrt{5}}]$. We leave it as a problem at the end of the article.

\section{Main results}
Let $G$ be a simple graph of order $n$. We denote by
$\chi(G,\lambda)$ the characteristic polynomial of $G$ and by
$\lambda_i(G)$ the $i$-th largest eigenvalue of the adjacency matrix
of $G$. For two graphs $G$ and $H$, we denote by $G\cup H$ the
disjoint union of $G$ and $H$. The join $G\vee H$ of $G$ and $H$ is
the graph obtained from $G\cup H$ by joining every vertex of $G$ to
every vertex of $H.$ To simplify notation, we write $G=({G_1}
\vee{G_2}) \vee  {G_3}$ by $G={G_1} \vee{G_2} \vee {G_3}$. Further,
we write the union and join of $k$ copies of a graph $G$ by $kG$ and
$k\circ G$, respectively. As usual, we denote by $\overline{G}$ the
complement of $G.$

In the following, much of our proof is a direct calculation, some of
which seems a little tedious and is listed in Appendix. We begin
with some elementary lemmas.
\begin{lemma}\label{2.1} (Cauchy's Interlace Theorem)\cite{F}. Let $A$
be a symmetric $n\times n$ matrix, and $B$ be an $m\times m$
principal submatrix of $A$, for some $m<n$. If the eigenvalues of
$A$ are $\lambda_{1}\geq \lambda_{2}\geq \cdots \geq\lambda_{n}$,
and the eigenvalues of $B$ are $\mu_{1}\geq \mu_{2}\geq \cdots
\geq\mu_{m}$, then for all $1\leq i\leq m$, $\lambda_{i}\geq
\mu_{i}\geq \lambda_{i+n-m}$.
\end{lemma}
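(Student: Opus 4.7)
The plan is to deduce the interlacing inequalities from the Courant--Fischer min-max characterization of eigenvalues of symmetric matrices. After a simultaneous row/column permutation (which changes neither the spectrum of $A$ nor that of $B$), I may assume $B$ sits as the top-left $m\times m$ principal block of $A$. The bridge between the two spectra is the isometric embedding $\iota\colon\mathbb{R}^m\hookrightarrow\mathbb{R}^n$ that pads a vector with $n-m$ zeros: for every $x\in\mathbb{R}^m$ one has $(\iota x)^{T}A(\iota x)=x^{T}Bx$ and $\|\iota x\|=\|x\|$, so Rayleigh quotients of $B$ on $\mathbb{R}^m$ coincide with Rayleigh quotients of $A$ restricted to the coordinate subspace $\iota(\mathbb{R}^m)\subset\mathbb{R}^n$.

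For the upper inequality $\mu_i\leq\lambda_i$, I use the max-min formula
$$\lambda_i(A)=\max_{\dim T=i}\,\min_{0\neq y\in T}\frac{y^{T}Ay}{y^{T}y}$$
where $T$ ranges over $i$-dimensional subspaces of $\mathbb{R}^n$. For any $i$-dimensional subspace $S\subset\mathbb{R}^m$, the subspace $\iota(S)$ is an admissible candidate for $T$, and the inner minimum over $\iota(S)$ equals the $B$-Rayleigh minimum over $S$; hence this latter minimum is at most $\lambda_i$. Taking the maximum over $S$ and invoking the analogous max-min formula for $\mu_i$ yields $\mu_i\leq\lambda_i$.

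For the lower inequality $\lambda_{i+n-m}\leq\mu_i$, I switch to the dual formula
$$\lambda_j(A)=\min_{\dim T=n-j+1}\,\max_{0\neq y\in T}\frac{y^{T}Ay}{y^{T}y},$$
applied with $j=i+n-m$, so that the competing subspaces $T\subset\mathbb{R}^n$ have dimension $m-i+1$. For any $(m-i+1)$-dimensional subspace $S\subset\mathbb{R}^m$, the embedded subspace $\iota(S)$ is again admissible for $T$, so $\lambda_{i+n-m}\leq\max_{0\neq x\in S}(x^{T}Bx)/(x^{T}x)$. Minimizing over $S$ and using the min-max formula for $\mu_i$ gives $\lambda_{i+n-m}\leq\mu_i$.

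The only real obstacle is bookkeeping: ensuring that the subspace dimensions $i$ and $n-i+1$, together with the index shift $i+n-m$, line up correctly with the max-min versus min-max form of Courant--Fischer. Once this is done, the proof reduces to observing that the admissible subspaces for the $B$-problem embed into the admissible subspaces for the $A$-problem with Rayleigh quotients preserved, and everything else follows from the two variational formulas.
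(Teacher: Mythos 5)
Your proof is correct. Note that the paper does not actually prove this lemma: it is quoted as a known result with a citation to Fisk \cite{F}, so there is no in-paper argument to compare against. Your route is the classical variational one via the Courant--Fischer max-min and min-max characterizations, and the bookkeeping checks out: the zero-padding $\iota$ preserves Rayleigh quotients and dimensions, the max-min formula over $i$-dimensional subspaces gives $\mu_i\leq\lambda_i$, and in the dual formula the choice $j=i+n-m$ makes the competing subspaces have dimension $n-j+1=m-i+1$, exactly matching the min-max description of $\mu_i$, which gives $\lambda_{i+n-m}\leq\mu_i$. For contrast, the cited reference establishes the same statement by a different, non-variational argument (a short perturbation/continuity analysis of the characteristic polynomial in the corank-one case); your proof is longer to write out but entirely self-contained and uses only the standard spectral theorem for symmetric matrices, which is all this paper needs, since it only ever applies the lemma through the hereditary property $\lambda_i(G)\geq\lambda_i(G-V_0)\geq\lambda_{i+k}(G)$.
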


By Lemma \ref{2.1}, if $V_0$ is a subset of $k$
vertices of a graph $G$, then for any $i$ with $1\leq i\leq
n-k,\lambda_i(G)\geq \lambda_i(G-V_0)\geq \lambda_{i+k}(G)$.

\begin{lemma}\label{2.2}\cite{W}  If a graph $G$ has no isolated
vertex and ${\overline G}$ is connected, then $G$ contains an
induced subgraph isomorphic to $P_4$ or $2K_2$.
\end{lemma}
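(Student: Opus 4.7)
I would prove the contrapositive: assuming $G$ contains no induced $P_4$ and no induced $2K_2$, show that $G$ has an isolated vertex or $\overline{G}$ is disconnected. The case $|V(G)|\le 1$ is immediate, so assume $|V(G)|\ge 2$.

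The key ingredient is the classical Seinsche characterization of $P_4$-free graphs (cographs): any cograph on at least two vertices is itself disconnected, or its complement is disconnected. Since $G$ is $P_4$-free by hypothesis, Seinsche's theorem applies. If $\overline{G}$ is disconnected, the contrapositive is established. Otherwise $G$ itself is disconnected; then if $G$ additionally had no isolated vertex, each of the (at least two) components of $G$ would contain an edge, and choosing one edge from each of two distinct components would produce an induced $2K_2$ in $G$, contradicting the hypothesis. Hence $G$ must have an isolated vertex, completing the contrapositive.

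If a self-contained treatment is preferred, I would work instead in $H:=\overline{G}$, noting that $\overline{P_4}=P_4$ and $\overline{2K_2}=C_4$, so the claim reduces to: a connected graph $H$ with no dominating vertex contains an induced $P_4$ or an induced $C_4$. One picks vertices $x,z$ at distance exactly $2$ in $H$ through a common neighbor $y$ (such a triple exists since $H$ is connected but not complete), then uses a non-neighbor $w$ of $y$ and splits on $w$'s adjacencies to $\{x,z\}$: adjacency to both yields an induced $C_4$ on $\{x,y,z,w\}$, adjacency to exactly one yields an induced $P_4$, and the remaining troublesome subcase (where $w$ is non-adjacent to all of $\{x,y,z\}$) is handled by following a shortest $w$-to-$y$ path in $H$ and extracting an induced $P_4$ from its last four vertices. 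This final subcase, and the verification that one of these branches always succeeds, is the main obstacle in a fully self-contained proof; citing Seinsche's theorem bypasses it entirely.
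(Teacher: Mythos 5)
Your proposal is correct, but note that the paper offers no proof of this lemma at all: it is quoted verbatim from Wolk's 1965 note \cite{W}, so any comparison is with that classical source rather than with an argument in the text. Your primary derivation is clean and complete: Seinsche's theorem on $P_4$-free graphs gives that $G$ or $\overline{G}$ is disconnected, the first branch of the contrapositive is immediate in the latter case, and in the former case two edges taken from distinct components of $G$ (which exist once every component is nontrivial) form an induced $2K_2$, so $G$ must have an isolated vertex. The only caveat is historical and logical economy: Wolk's lemma (1965) predates Seinsche's theorem (1974), and the two are essentially equivalent in strength, so you are deriving the older, more elementary fact from the newer, stronger one. Your self-contained sketch in $\overline{G}$ is the honest route to an independent proof, and you correctly identify where it is incomplete: in the subcase where $w$ is nonadjacent to all of $\{x,y,z\}$, a shortest $w$--$y$ path of length $2$, say $w$--$u$--$y$ with $u$ adjacent to both $x$ and $z$, yields only a $K_{1,3}$ or a paw on the natural four-vertex sets, so "take the last four vertices of a shortest path" does not close that case; one needs an extra device such as choosing $y$ of maximum degree or an induction on $|V(H)|$. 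Since you flag this gap explicitly and your Seinsche-based argument does not depend on it, the proposal as a whole is sound.
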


\begin{lemma}\label{2.3} \cite{CH1} If a graph $G$  has no isolated
vertex, then $\lambda_2(G)=0$ if and only if $G$ is a complete
$k$-partite graph with $2\leq k\leq n-1.$
\end{lemma}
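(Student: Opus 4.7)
The plan is to prove the two directions separately, with the necessity being the main work.

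For the sufficiency ($\Leftarrow$), I would start with $G=K_{n_1,\ldots,n_k}$ where $2\leq k\leq n-1$. The argument uses the equitable partition of $V(G)$ into the $k$ parts: the corresponding $k\times k$ quotient matrix $B$, with $B_{ii}=0$ and $B_{ij}=n_j$ for $i\neq j$, contributes its $k$ eigenvalues to the spectrum of the adjacency matrix of $G$. By analyzing the equation $\sum_{i=1}^{k}\frac{n_i}{\lambda+n_i}=1$, I would show $B$ has exactly one positive eigenvalue (the Perron root) and $k-1$ negative ones. The remaining $n-k$ eigenvalues of the adjacency matrix arise from vectors supported in a single part that sum to zero there, and all equal $0$. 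Since $k\leq n-1$ forces $n-k\geq 1$, the value $0$ appears in the spectrum and $\lambda_2(G)=0$.

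For the necessity ($\Rightarrow$), I would suppose $G$ has no isolated vertex and $\lambda_2(G)=0$, and aim to show $G$ is complete multipartite. First, $G$ must be connected: otherwise each component contains an edge, so each has first eigenvalue at least $1$, forcing $\lambda_2(G)\geq 1$. A direct calculation gives $\lambda_2(P_4)=(\sqrt{5}-1)/2>0$ and $\lambda_2(2K_2)=1$, so Lemma \ref{2.1} forbids $G$ from containing either as an induced subgraph. Then Lemma \ref{2.2} forces $\overline{G}$ to be disconnected; I would let $V_1,\ldots,V_k$ ($k\geq 2$) be the vertex sets of the components of $\overline{G}$ and set $H_i=\overline{G}[V_i]$, so each $H_i$ is connected and $G[V_i]=\overline{H_i}$.

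The main obstacle is the crucial step of showing each $V_i$ is independent in $G$. Suppose otherwise: pick $u,v\in V_i$ with $u\sim v$ in $G$, i.e.\ $u\not\sim v$ in $H_i$, and let $\ell\geq 2$ be the length of a shortest $u$-$v$ path in $H_i$. If $\ell=2$, I would take the intermediate vertex $x$ of the path together with any $w$ from a different part $V_j$; the induced subgraph of $G$ on $\{u,v,w,x\}$ has edge set $\{uv,uw,vw,wx\}$, i.e.\ a triangle $uvw$ with pendant edge $wx$, and a short computation of its characteristic polynomial yields $\lambda_2>0$. If $\ell\geq 3$, any four consecutive vertices of a shortest path induce $P_4$ in $H_i$ (each non-consecutive pair is non-adjacent by minimality), and since $P_4$ is self-complementary they also induce $P_4$ in $G[V_i]$. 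In either case Lemma \ref{2.1} contradicts $\lambda_2(G)=0$. Therefore each $V_i$ is independent and $G=K_{|V_1|,\ldots,|V_k|}$ with $k\geq 2$. Finally $k\leq n-1$ follows, because otherwise $G=K_n$ would give $\lambda_2(G)=-1\neq 0$, so at least one part must have size at least $2$.
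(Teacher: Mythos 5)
The paper does not prove this lemma at all: it is quoted from the reference \cite{CH1} (and the fact itself goes back to J.~H.~Smith's classical characterization of graphs with $\lambda_2\le 0$), so there is no in-paper argument to compare yours against. Your proof is correct and self-contained. The sufficiency via the equitable partition is sound: the quotient matrix has no zero eigenvalue (the rational function $1-\sum_i n_i/(\lambda+n_i)$ equals $1-k<0$ at $\lambda=0$) and exactly one positive one, while the remaining $n-k\ge 1$ eigenvalues, coming from vectors summing to zero on a single part, all vanish because vertices in a part are non-adjacent twins. The necessity argument is essentially the standard one and fits naturally with the paper's own toolkit: connectivity of $G$, exclusion of induced $P_4$ and $2K_2$ by interlacing (Lemma \ref{2.1}), disconnectedness of $\overline G$ via Lemma \ref{2.2}, and then the shortest-path analysis inside a component of $\overline G$ correctly reduces a non-independent part either to an induced paw (whose second eigenvalue is the root of $\lambda^3-\lambda^2-3\lambda+1$ in $(0,1)$, approximately $0.311$) or, via the self-complementarity of $P_4$, to an induced $P_4$ in $G$; both contradict $\lambda_2(G)=0$. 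The closing observations ($k\ge 2$ from $\overline G$ disconnected, $k\le n-1$ since $\lambda_2(K_n)=-1$) are also right. The only stylistic remark is that the paw computation could be replaced by noting the paw is $K_1\vee(K_1\cup K_2)$ and invoking interlacing against $K_1\cup K_2$ after deleting the dominating vertex would not work (that gives only $\lambda_2\ge$ something nonpositive), so your explicit computation is indeed needed there.
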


By Lemma \ref{2.1}, if a graph $H$ satisfies $\lambda_2(H)\geq 1/2$,
then any graph $G$ that contains $H$ as an induced subgraph
satisfies  $\lambda_2(G)\geq 1/2$ too (the hereditary property). By a direct calculation, we
have $\lambda_2(2K_2)=1>1/2$ and $\lambda _2(P_4)=({\sqrt 5 -
1})/{2}
> 1/2$. So the following property follows directly from Lemma
\ref{2.1}.
\begin{lemma}\label{2.4} For any graph $G$, if $\lambda_2(G)< 1/2$, then $G$ contains no induced subgraph isomorphic to $P_4$ or
$2K_2.$
\end{lemma}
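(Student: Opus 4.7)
The plan is to prove the contrapositive: if $G$ contains $P_4$ or $2K_2$ as an induced subgraph, then $\lambda_2(G) \geq 1/2$. This follows immediately from the hereditary property once the second largest eigenvalues of the two forbidden subgraphs are known to exceed $1/2$.

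First I would compute $\lambda_2(2K_2)$ and $\lambda_2(P_4)$ directly from their characteristic polynomials (or simply by citing them, since the statement above Lemma~\ref{2.4} already records these values). For $2K_2$, the adjacency matrix is block-diagonal with two copies of $\bigl(\begin{smallmatrix} 0 & 1 \\ 1 & 0\end{smallmatrix}\bigr)$, so the spectrum is $\{1,1,-1,-1\}$, giving $\lambda_2(2K_2)=1>1/2$. For $P_4$, one checks $\chi(P_4,\lambda)=\lambda^4-3\lambda^2+1$, whose roots are $\pm(\sqrt{5}\pm 1)/2$, so $\lambda_2(P_4)=(\sqrt 5 - 1)/2\approx 0.618>1/2$.

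Next, I would invoke Lemma~\ref{2.1}. If $H$ is an induced subgraph of $G$ on $m$ vertices, then the adjacency matrix $A(H)$ is a principal $m\times m$ submatrix of $A(G)$, and taking $i=2$ in Cauchy's interlacing inequality yields $\lambda_2(G)\geq \lambda_2(H)$ (this is the hereditary property already pointed out by the authors in the paragraph preceding the lemma). Applying this with $H=2K_2$ or $H=P_4$ gives $\lambda_2(G)\geq 1/2$ in either case, contradicting the hypothesis $\lambda_2(G)<1/2$.

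There is essentially no obstacle here: the lemma is a routine consequence of Lemma~\ref{2.1} combined with the two elementary spectral computations. The only minor care needed is to make sure that Cauchy interlacing is applied to $i=2$ (not $i=1$), which requires $H$ to have at least two vertices — both $2K_2$ and $P_4$ do. Thus the write-up can be kept to a single short paragraph consisting of the two eigenvalue computations followed by one invocation of the hereditary property.
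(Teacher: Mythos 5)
Your argument is exactly the one the paper uses: the authors record $\lambda_2(2K_2)=1$ and $\lambda_2(P_4)=(\sqrt5-1)/2$ in the paragraph preceding the lemma and then invoke the hereditary property from Lemma~\ref{2.1}. Your computations and the application of interlacing at $i=2$ are both correct, so the proposal is fine as written.
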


\begin{lemma}\label{3.1}
Let $G$ be a connected graph and ${\overline G_1},{\overline G_2},\cdots,{\overline G_k}$ be the components of $\overline{G}$. If $\lambda_2(G)<1/2$, then\\
(i). $\overline{G}$ is not connected, i.e., $k \ge 2$;\\
(ii).  ${G_i}$ contains at least one isolated vertex for every
$i\in\{1,2,\ldots, k\}$.
\end{lemma}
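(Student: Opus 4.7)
The plan is to deduce both parts directly from Lemma \ref{2.4}, which says that $G$ is free of induced $P_4$ and $2K_2$, combined with the structural dichotomy in Lemma \ref{2.2}.

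For part (i), I would argue by contradiction. Suppose $\overline{G}$ is connected. Since $G$ itself is connected with at least two vertices (otherwise $\lambda_2(G)$ is undefined), $G$ has no isolated vertex. Then Lemma \ref{2.2} applies to $G$ and forces an induced $P_4$ or $2K_2$. This contradicts Lemma \ref{2.4}, so $k \ge 2$.

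For part (ii), I would again argue by contradiction. Fix a component $\overline{G_i}$ of $\overline{G}$ and suppose, toward contradiction, that $G_i$ has no isolated vertex. If $|V(\overline{G_i})|=1$, then the single vertex of $G_i$ is automatically isolated, so we may assume $|V(\overline{G_i})|\ge 2$. The key observation is that $G_i$ is precisely the complement of $\overline{G_i}$ (viewed as a graph on the vertex set $V(\overline{G_i})$), since $\overline{G_i}$ is an induced subgraph of $\overline{G}$ and complementation preserves vertex sets. Because $\overline{G_i}$ is a component of $\overline{G}$, the complement of $G_i$ is connected, so Lemma \ref{2.2} applied to $G_i$ produces an induced $P_4$ or $2K_2$ inside $G_i$. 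This induced subgraph sits inside $G$ as well, contradicting Lemma \ref{2.4}.

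The whole argument is essentially two applications of the same idea, and there is no computation to carry out. The only point needing minor care is the degenerate case of a singleton component in part (ii) and the explicit identification of $G_i$ with the graph-theoretic complement of $\overline{G_i}$; once this is noted, the proof reduces to quoting Lemmas \ref{2.2} and \ref{2.4}. I do not anticipate any substantive obstacle.
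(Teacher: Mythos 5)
Your proof is correct and follows essentially the same route as the paper's: both parts are obtained by applying Lemma \ref{2.2} (to $G$ for (i), to $G_i$ for (ii)) and contradicting Lemma \ref{2.4}. The only difference is that you spell out the degenerate singleton case and the identification of $G_i$ with the complement of $\overline{G_i}$, which the paper leaves implicit.
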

\begin{proof} (i). If $\overline G $ is connected, then by Lemma \ref{2.2},
$G$ contains an induced subgraph isomorphic to ${P_4}$ or $2{K_2}$, a contradiction to Lemma \ref{2.4}.

(ii). Suppose to the contrary  that ${G_i}$ contains no isolated
vertex for some $i\in\{1,2,\ldots,k\}$.  Since ${\overline G_i} $ is
connected, so by Lemma \ref{2.2}, ${G_i}$ contains an induced
subgraph isomorphic to ${P_4}$ or $2{K_2}$. Further, noticing that
$G={G_1} \vee{G _2} \vee \cdots \vee {G _k}$, $G$ contains an
induced subgraph ${P_4}$ or $2{K_2}$ as ${G _i}$ is an induced
subgraph of $G$. This is again a contradiction.
\end{proof}

By Lemma \ref{3.1}, from now on we always write $G$ as the form
$$G={G_1} \vee{G_2} \vee \cdots \vee {G_k},$$
where $k\geq 2$. In addition to $P_4$ and $2K_2$, in the following proposition, we list some other graphs that have the second largest eigenvalue at least $1/2$, which will be used in our forthcoming argument.

\noindent\textbf{Proposition 1.} Let $H_i=X_i\vee K_1$, where $X_i$ is as
listed in the following table. Then for any $i=1,2,\ldots,13$,
$\lambda_2(H_i)\geq 1/2$.

\begin{table}[h] \label{S}
\centering
\begin{tabular}{|l|l|l|l| }
\hline
$X_i$       &$\lambda_2(H_i)$    &  $X_i$       &$\lambda_2(H_i)$  \\
\hline
$X_1={\overline K_2}\cup K_3$    & 0.6784    &  $X_8=K_1\cup({\overline K_2}\vee {\overline K_4} \vee K_2)$ & 0.5010\\
\hline
$X_2={\overline K_2}\cup P_3$     & 0.5293     & $X_9=K_1\cup({\overline K_2}\vee {\overline K_2} \vee {\overline K_2} \vee K_1)$ &0.5030\\
\hline
$X_3={\overline K_3}\cup K_2$     & 0.5720     & $X_{10}=K_1\cup((K_1\cup P_3)\vee K_1)$ &0.5368\\
\hline
$X_4=({\overline K_2}\cup K_2)\vee K_1$ &0.5151 & $X_{11}=K_1\cup((\overline{K}_2\cup K_2)\vee K_1)$ &0.5730\\
\hline
$X_5=(K_1\cup C_3)\vee K_1$       &   0.5451    &   $X_{12}=K_1\cup({\overline K_{1,3}}\vee K_1)$ &0.6818\\
\hline
$X_6=K_1\cup K_5$                 &   0.5135     &  $X_{13}=K_1\cup({\overline P_3}\vee K_2)$ &0.5100\\
\hline
$X_7=K_1\cup({\overline K_3}\vee {\overline K_3} \vee K_2)$ &0.5022 & &\\
\hline
\end{tabular}
\caption{$X_i,i=1,2,\ldots,13$.}
\end{table}
\begin{lemma}\label{3.2} If $\lambda_2(G)<1/2$ and ${G_i}$ is not empty for some $i\in\{1,2,\ldots,k\}$, then\\
(i). ${G_i}$ has exactly one isolated vertex when $k\geq 3$; or\\
(ii). ${G_i}$ has at most two isolated vertices when $k = 2$.
\end{lemma}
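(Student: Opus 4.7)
The plan is to combine the hereditary property (Lemma~\ref{2.1}) with two of the forbidden induced subgraphs supplied by Proposition~1, namely $H_3=(\overline{K}_3\cup K_2)\vee K_1$ and $H_4=((\overline{K}_2\cup K_2)\vee K_1)\vee K_1$. Because $G_i$ is non-empty by hypothesis, it contains an edge $xy$; together with any assumed isolated vertices this produces an induced copy of $\overline{K}_m\cup K_2$ inside $G_i$ for a suitable $m$, which I will promote to $H_3$ or $H_4$ by exploiting the join structure $G=G_1\vee G_2\vee\cdots\vee G_k$.

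For part (i), suppose toward contradiction that $G_i$ carries two isolated vertices $u_1,u_2$ in addition to the edge $xy$. Since $k\geq 3$, I may choose distinct indices $j,l\in\{1,\ldots,k\}\setminus\{i\}$ and pick vertices $v_j\in G_j$, $v_l\in G_l$. In $G$ the vertices $v_j$ and $v_l$ are adjacent (they lie in different components of $\overline{G}$) and each is adjacent to every vertex of $G_i$. Hence $\{u_1,u_2,x,y,v_j,v_l\}$ induces in $G$ the graph $(\overline{K}_2\cup K_2)\vee K_2$, which by associativity of the join equals $H_4$. Proposition~1 together with the hereditary property then yields $\lambda_2(G)\geq\lambda_2(H_4)\geq 1/2$, a contradiction. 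Combined with Lemma~\ref{3.1}(ii), this forces $G_i$ to have exactly one isolated vertex.

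For part (ii), let $k=2$ and assume for contradiction that $G_i$ has three isolated vertices $u_1,u_2,u_3$ together with an edge $xy$. Pick any vertex $v\in G_{3-i}$ (which exists by Lemma~\ref{3.1}(ii)); in $G$ this $v$ is adjacent to all five chosen vertices of $G_i$, so $\{u_1,u_2,u_3,x,y,v\}$ induces $(\overline{K}_3\cup K_2)\vee K_1=H_3$, contradicting the hypothesis via Proposition~1. The reason the conclusion weakens from ``exactly one'' to ``at most two'' when $k=2$ is transparent from this setup: with only one external component available, I cannot always produce two pairwise-adjacent external vertices to play the role of the outer $K_2$ in $H_4$, so I must settle for the larger forbidden subgraph $H_3$, which tolerates one additional isolated vertex before forcing a contradiction.

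The only real subtlety is keeping track of which adjacencies appear in the induced subgraph, but this is immediate from the decomposition: vertices in distinct $G_j$'s are always adjacent in $G$, while vertices inside the same $G_j$ inherit their adjacency from $G_j$. All quantitative work---namely the numerical lower bounds on $\lambda_2(H_3)$ and $\lambda_2(H_4)$---has already been absorbed into Proposition~1, so no eigenvalue computation is needed in the proof itself.
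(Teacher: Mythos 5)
Your proposal is correct and follows essentially the same route as the paper: it rules out two isolated vertices (when $k\geq 3$) via the forbidden induced subgraph $H_4=(\overline{K}_2\cup K_2)\vee K_1\vee K_1$ and three isolated vertices (when $k=2$) via $H_3=(\overline{K}_3\cup K_2)\vee K_1$, invoking Proposition~1 and the hereditary property from Lemma~\ref{2.1}, and then cites Lemma~\ref{3.1}(ii) to upgrade ``at most one'' to ``exactly one'' in case (i). The only difference is that you spell out the vertex choices $v_j,v_l$ across distinct join parts explicitly, which the paper leaves implicit in the phrase ``as $k\geq 3$.''
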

\begin{proof}(i). Suppose to the contrary that  ${G_i}$ contains at least two isolated vertices. Since ${G_i}$ is not empty,
${\overline K_2}  \cup {K_2}$ is an induced subgraph of ${G_i}$ and,
hence an induced subgraph of $G.$ Therefore, $({\overline K_2}  \cup
{K_2}) \vee {K_1} \vee {K_1}= {H_4}$ is an induced subgraph of $G$
as $k\geq 3$. By Lemma \ref{2.1} and Table 1, ${\lambda _2}(G) \ge
{\lambda _2}({H_4})
>1/2,$ a contradiction. Further, by Lemma \ref{3.1} (ii),
${G_i}$ has exactly one isolated vertex.

    (ii). To the contrary suppose that ${G_i}$ has at least three isolated vertices. Since
${G_i}$ contains an edge, ${\overline K_3}  \cup {K_2}$ is an
induced subgraph of ${G_i}$  and, hence $({\overline K_3} \cup
{K_2}) \vee {K_1} = {H_3}$ is an induced subgraph of $G$. By Lemma
\ref{2.1} and Table 1, ${\lambda _2}(G) \ge {\lambda _2}({H_3})
> 1/2,$ a contradiction.
\end{proof}

\begin{theorem}\label{3.3} {Let $G={G _1} \vee {G _2}$ be a connected graph of order $n$. If ${G_i}$ is not empty and has exactly two isolated vertices  for some $i\in\{1,2\}$, then  $\lambda_2(G)<1/2$  if and only if $G= ({\overline K_2}\cup K_2)\vee
{\overline K_{n-4}}$.}
\end{theorem}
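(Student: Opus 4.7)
The plan is to establish both directions by combining the hereditary property (Lemma~\ref{2.4}) with the explicit forbidden induced subgraphs listed in Table~1, and then to carry out one direct spectral computation for the candidate graph.

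For the forward implication, assume without loss of generality that $G_1$ is non-empty with exactly two isolated vertices, so $G_1 = \overline{K_2}\cup F$, where $F$ has no isolated vertex and at least one edge. The first step is to show that $F$ is connected: otherwise $F$ would have at least two components each containing an edge, giving an induced $2K_2$ in $G$ and contradicting Lemma~\ref{2.4}. The second step is to show $|V(F)|=2$. If $F$ had at least three vertices, then since $F$ is connected it would contain either an induced $P_3$ (when $F$ is not complete) or an induced $K_3$ (when $F$ is complete). Combined with the two isolated vertices of $G_1$, this would force $G_1$ to contain $X_2$ or $X_1$ as an induced subgraph, and appending any vertex of $G_2$ would then produce $H_2 = X_2\vee K_1$ or $H_1 = X_1\vee K_1$ as an induced subgraph of $G$, contradicting Lemma~\ref{2.1} together with Table~1. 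Hence $F = K_2$ and $G_1 = \overline{K_2}\cup K_2$. Finally, I would rule out any edge in $G_2$: such an edge together with $G_1$ would induce $(\overline{K_2}\cup K_2)\vee K_2 = H_4$ in $G$, again contradicting Table~1. Therefore $G_2 = \overline{K_{n-4}}$ and $G = (\overline{K_2}\cup K_2)\vee \overline{K_{n-4}}$.

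For the converse, I would compute the spectrum of $G = (\overline{K_2}\cup K_2)\vee \overline{K_{n-4}}$ via the equitable partition into the three cells formed by the two isolated vertices of $G_1$, the two endpoints of the edge of $G_1$, and the $n-4$ vertices of $\overline{K_{n-4}}$. Eigenvectors orthogonal to this partition contribute only $0$'s and a single $-1$, all of which are strictly less than $1/2$. The quotient characteristic polynomial simplifies to $p(\lambda) = \lambda^3 - \lambda^2 - 4(n-4)\lambda + 2(n-4)$; since $p(0) = 2(n-4) > 0$ and $p(1/2) = -1/8 < 0$ for $n \geq 5$, the polynomial $p$ has exactly one root in $(0, 1/2)$, one larger root, and one negative root. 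Consequently $\lambda_2(G)$ equals the quotient root lying in $(0, 1/2)$, so $\lambda_2(G) < 1/2$.

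The most delicate step should be the structural analysis of $F$: one must carefully verify that every way $F$ could extend beyond $K_2$ yields one of $H_1, H_2$ as an induced subgraph of $G$. Once $G_1 = \overline{K_2}\cup K_2$ is forced and $G_2$ is shown to be an independent set (using $H_4$), the remaining eigenvalue computation is routine via the equitable partition, and a sign argument localizes $\lambda_2$ inside $(0, 1/2)$ without needing an explicit root formula.
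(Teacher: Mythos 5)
Your proposal is correct and follows essentially the same route as the paper: the forward direction forces $G_1=\overline{K_2}\cup K_2$ by exhibiting the forbidden induced subgraphs $H_1=(\overline{K_2}\cup K_3)\vee K_1$ and $H_2=(\overline{K_2}\cup P_3)\vee K_1$, rules out edges in $G_2$ via $H_4$, and the converse reduces to the same cubic $\lambda^3-\lambda^2-4(n-4)\lambda+2(n-4)$ with the identical sign check at $0$ and $1/2$. The only difference is cosmetic: you obtain the cubic from the equitable-partition quotient matrix (accounting separately for the eigenvalues $0$ and $-1$ orthogonal to the partition), whereas the paper derives the full factorization $\lambda^{n-4}(\lambda+1)f(\lambda)$ by direct determinant manipulation.
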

\begin{proof} If ${G_i}$ has at least two edges, then all edges in
${G _i}$ are in the same component of ${G _i}.$ Otherwise, ${G _i}$
would contain $2{K_2}$ as an induced subgraph and, hence ${\lambda
_2}(G) \ge {\lambda _2}({2{K_2}}) >1/2$ by Lemma \ref{2.1}
and Lemma \ref{2.4}, a contradiction. Therefore,  ${G_i}$ must
contain an induced subgraph isomorphic to ${P_3}$ or ${K_3}$ and,
hence $G$ has $({\overline K_2}  \cup {P_3}) \vee {K_1}= {H_2}$ or
$({\overline K_2}  \cup {K_3}) \vee {K_1}={H_1}$ as an induced
subgraph. By Lemma \ref{2.1} and Table 1, this is again a
contradiction. Thus $G= ({\overline K_2}\cup K_2)\vee {\overline
K_{n-4}}$ by Lemma \ref{3.2}.

Conversely, we prove ${\lambda _2}(({\overline K_2}\cup K_2)\vee
{\overline K_{n-4}})<1/2$. By a direct calculation (see Appendix 1 for details),
we have
$$\chi (G,\lambda ) = {\lambda ^{n-4}}(\lambda  + 1)({\lambda ^3} - {\lambda ^2} - 4(n-4)\lambda  + 2(n-4)).$$
Let $f(\lambda)={\lambda ^3} - {\lambda ^2} - 4(n-4)\lambda  +
2(n-4).$ It is easy to get that $f(-\infty)<0$, $f(0)>0$, $f(1/2)<0$
and $f(+\infty)>0$. Thus the three roots of $f(\lambda)=0$ lie in
$(-\infty,0)$, $(0,1/2)$ and $(1/2,+\infty)$. Therefore
$\lambda_2(G)<1/2$, which completes our proof.
\end{proof}
\begin{lemma}\label{iff}
Let $\lambda_{1}\geq \lambda_{2}\geq \cdots \geq\lambda_{n}$ be all the eigenvalues of a graph $G$. If $G$ is non-empty connected and ${\lambda _2}(G)<1/2$, then
 \begin{equation}\label{eigen2}
\chi \left(G,\frac{1}{2}\right)= \prod\limits_{i = 1}^n \left(\frac{1}{2} -\lambda _i\right) < 0.
\end{equation}
Conversely, if (\ref{eigen2}) holds and  ${\lambda _3}(G)<1/2$, then  ${\lambda _2}(G)<1/2$.
\end{lemma}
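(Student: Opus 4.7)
The plan is to exploit the fact that $\chi(G,1/2)=\prod_{i=1}^{n}(1/2-\lambda_i)$ is a product of $n$ real factors, so its sign is governed by the parity of the number of indices $i$ with $\lambda_i\ge 1/2$. Both directions reduce to showing that exactly one factor is negative precisely when $\lambda_2(G)<1/2$ (given the side hypotheses).

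For the forward direction, I would first use that $G$ is non-empty and connected to pin down $\lambda_1(G)$. Since $G$ contains an edge, $K_2$ sits inside $G$ as an induced subgraph, so Cauchy interlacing (Lemma \ref{2.1}) forces $\lambda_1(G)\ge\lambda_1(K_2)=1>1/2$; hence the factor $1/2-\lambda_1<0$. The hypothesis $\lambda_2(G)<1/2$ then gives $\lambda_i\le\lambda_2<1/2$, so $1/2-\lambda_i>0$ for every $i\ge 2$. Thus $\chi(G,1/2)$ is a product of one strictly negative factor and $n-1$ strictly positive factors, proving $\chi(G,1/2)<0$.

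For the converse, I would argue by contradiction. Assume $\chi(G,1/2)<0$ and $\lambda_3(G)<1/2$ but $\lambda_2(G)\ge 1/2$. Then $\lambda_1\ge\lambda_2\ge 1/2$, so both $1/2-\lambda_1\le 0$ and $1/2-\lambda_2\le 0$, and their product is $\ge 0$. On the other hand, $\lambda_i\le\lambda_3<1/2$ for $i\ge 3$, so every remaining factor $1/2-\lambda_i$ is strictly positive. Multiplying, $\chi(G,1/2)\ge 0$, contradicting the assumption. Hence $\lambda_2(G)<1/2$.

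This is really a parity-of-sign argument, so there is no serious obstacle; the only point requiring a little care is the strictness in the forward direction, which is handled by noting $\lambda_1\ge 1>1/2$ strictly (so the product is strictly negative, not merely non-positive) and by using the strict bound $\lambda_2<1/2$ to conclude that no later factor can vanish. In the converse, the strict hypothesis $\chi(G,1/2)<0$ rules out the borderline case $\lambda_2=1/2$ (which would make $\chi(G,1/2)=0$), and the strict hypothesis $\lambda_3<1/2$ prevents any cancellation from factors beyond the second.
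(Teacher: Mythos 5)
Your proof is correct and follows essentially the same sign-counting argument as the paper: $\lambda_1\ge\lambda_1(K_2)=1>1/2$ gives exactly one negative factor in the forward direction, and the converse is the same parity observation. (The paper also invokes the simplicity of $\lambda_1$ via Perron--Frobenius, which, as your argument shows, is not actually needed.)
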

\begin{proof} Since $G$ is non-empty connected, ${\lambda _1}(G) \ge {\lambda _1}({K_2}) = 1$. Recall that the largest eigenvalue of a connected graph is simple (Perron-Frobenius theory).
Hence, if ${\lambda _2}(G)<1/2$, then (\ref{eigen2}) holds. Conversely, if (\ref{eigen2}) holds and  ${\lambda _3}(G)<1/2$, then  ${\lambda _2}(G)<1/2$.
\end{proof}
\subsection{${G _i}$ is non-bipartite for some $i\in\{1,2,\ldots,k\}$}
\begin{lemma}\label{3.4} If $\lambda_2(G)<1/2$ and ${G _i}$  is a non-bipartite graph for some $i\in\{1,2,\ldots,k\}$,   then $k = 2$ and $G ={G _i} \vee {\overline K_t}$.
\end{lemma}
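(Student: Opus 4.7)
The plan is to use the hereditary property (Lemma~\ref{2.1}, Lemma~\ref{2.4}) together with the induced subgraph $H_5$ from Table~1. The strategy is first to extract an induced $K_1\cup K_3$ from $G_i$, then observe that any edge coming from ``outside'' $G_i$ in the join decomposition would combine with this $K_1\cup K_3$ to produce $H_5$.

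First I would show that $G_i$ contains an induced $K_3$. By Lemma~\ref{2.4}, $G$ has no induced $P_4$ and hence neither does $G_i$. Since $G_i$ is non-bipartite, it contains some induced odd cycle $C_{2\ell+1}$ with $\ell\ge 1$; for $\ell\ge 2$ the cycle $C_{2\ell+1}$ itself already contains an induced $P_4$, contradicting Lemma~\ref{2.4}. Hence $\ell=1$ and $G_i$ contains an induced $K_3$. Combined with any isolated vertex of $G_i$ (guaranteed by Lemma~\ref{3.1}(ii)), we obtain an induced $K_1\cup K_3$ in $G_i$, and therefore in $G$.

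Next I would rule out $k\ge 3$ and extraneous edges. Observe that
$$H_5 \;=\; X_5\vee K_1 \;=\; \bigl((K_1\cup C_3)\vee K_1\bigr)\vee K_1 \;=\; (K_1\cup K_3)\vee K_2,$$
using $K_1\vee K_1=K_2$ and $C_3=K_3$. Suppose first that $k\ge 3$. Pick any vertices $u_1\in V(G_{j_1})$ and $u_2\in V(G_{j_2})$ with $j_1,j_2\in\{1,\dots,k\}\setminus\{i\}$ distinct; by the join, $u_1u_2$ is an edge of $G$ and each of $u_1,u_2$ is adjacent to every vertex of $G_i$. Hence the vertex set of the induced $K_1\cup K_3$ in $G_i$ together with $\{u_1,u_2\}$ induces $(K_1\cup K_3)\vee K_2=H_5$ in $G$, so $\lambda_2(G)\ge \lambda_2(H_5)>1/2$ by Lemma~\ref{2.1} and Table~1, a contradiction. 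Thus $k=2$, so write $G=G_i\vee G_j$. If $G_j$ contained an edge $u_1u_2$, the identical six-vertex argument (with $u_1,u_2$ now inside $G_j$) would again exhibit $H_5$ as an induced subgraph of $G$, a contradiction. Therefore $G_j$ is edgeless, i.e., $G_j=\overline{K}_t$ for some $t\ge 1$, which is the conclusion.

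The only mildly delicate point is the first step: one must notice that $P_4$-freeness of $G_i$ already excludes every induced $C_m$ with $m\ge 5$, so non-bipartiteness of $G_i$ has to be witnessed by an induced triangle. Once this is established, the rest of the argument is a single application of the forbidden-subgraph table with $H_5$, handling $k\ge 3$ and the ``$G_j$ has an edge'' subcase simultaneously.
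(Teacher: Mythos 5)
Your proof is correct and follows essentially the same route as the paper's: reduce the shortest odd cycle of $G_i$ to a triangle via $P_4$-freeness, attach an isolated vertex from Lemma~\ref{3.1}(ii) to get an induced $K_1\cup C_3$, and then use $H_5=(K_1\cup C_3)\vee K_1\vee K_1$ to kill both the case $k\ge 3$ and the case that the second part contains an edge. The only cosmetic difference is that you phrase the two contradictions as a single unified six-vertex argument, whereas the paper treats them in sequence.
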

\begin{proof}  Since ${G _i}$ is non-bipartite,  ${G _i}$ has
an odd cycle. Let ${C_{2m + 1}}$ be a shortest odd cycle in ${G
_i}$. It is clear that ${C_{2m + 1}}$ is an induced subgraph of
${G_i}.$ If $m >1$, then ${G _i}$ contains ${P_4}$ as an induced
subgraph,  a contradiction. Hence, $m=1$ and ${G _i}$ has ${C_3}$ as
an induced subgraph. By Lemma \ref{3.1} (ii), ${G _i}$ contains an
induced subgraph ${K_1} \cup {C_3}$. Further, if $k \ge 3$, then $G$
has an induced subgraph $({K_1} \cup {C_3}) \vee {K_1} \vee
{K_1}={H_5}$, a contradiction by Table 1. Therefore, $k = 2$ and $G
={G_1} \vee{G_2}.$ Since ${K_1} \cup {C_3}$ is an induced subgraph
of ${G_1}$, if ${G_2} $ is not empty, then ${K_2} = {K_1} \vee
{K_1}$ is an induced subgraph of ${G_2}$ and, hence $({K_1} \cup
{C_3})\vee {K_1} \vee {K_1}={H_5}$  is an induced subgraph of $G$,
again a contradiction. This completes our proof.
\end{proof}
\begin{theorem}\label{3.5} Let $G={G _1} \vee {K_1}$, where ${G_1} $ is a non-bipartite graph. If $\lambda_2(G)<1/2$ then  ${G_1} $ is one of the following graphs:\\
(i). ${K_1}\cup({\overline
K_s}\vee{\overline K_2}\vee {K_2}),$ $2 \le s \le 3$;\\
(ii). ${K_1} \cup ({\overline K_s}  \vee {K_3}),$ $s \ge 1$;\\
(iii). ${K_1} \cup ({\overline K_{{s_1}}} \vee {\overline K_{{s_2}}}
\vee {\overline K_{{s_3}}} ),$ $1 \le {s_3}
\le {s_2} \le {s_1}$;\\
(iv). ${K_1} \cup ( {\overline K_s}\vee
 {\overline P_3}),$ $s \ge 1.$
\end{theorem}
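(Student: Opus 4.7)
The plan is to reduce $G_1$ to a short list of cograph families by combining the hereditary property with the forbidden subgraphs of Proposition~1.

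First I would show that $G_1 = K_1 \cup G_1'$ with $G_1'$ a connected cograph containing an induced $K_3$. By Lemma~\ref{3.1}(ii), $G_1$ has at least one isolated vertex; by Lemma~\ref{3.2}(ii) it has at most two; and having exactly two would, via Theorem~\ref{3.3}, force $G_1 = \overline{K}_2 \cup K_2$, which is bipartite and contradicts the hypothesis. So $G_1$ has a unique isolated vertex $v_0$, and $G_1' := G_1 - v_0$ is nonempty with no isolated vertex. Being $2K_2$-free (Lemma~\ref{2.4}), $G_1'$ is connected, and being non-bipartite and $P_4$-free, its shortest odd cycle is $C_3$, so $K_3$ is induced in $G_1'$.

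Next I would analyze $G_1'$ through the cograph join decomposition $G_1' = F_1 \vee \cdots \vee F_k$ with $k \geq 2$, where each $F_i$ is $K_1$ or disconnected. Because $X_1, X_2, X_3$ are forbidden in $G_1$ and $v_0$ is isolated, $G_1'$ has no induced $K_1 \cup K_3$, $K_1 \cup P_3$, or $\overline{K}_2 \cup K_2$. Therefore the edge-containing component of any $F_i$ (if any) must be a $\{P_3, K_3\}$-free connected cograph containing an edge, i.e., $K_2$, and at most one isolated vertex may coexist with it. Hence each $F_i$ is $\overline{K}_{t_i}$ (with $t_i \geq 1$) or $K_1 \cup K_2$. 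Moreover $X_{13}$, equivalently $(K_1 \cup K_2) \vee K_2 \not\subseteq G_1'$, forbids two $F_i$'s from being $K_1 \cup K_2$ at once.

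Then I would split into two cases. If no $F_i = K_1 \cup K_2$, then $G_1' = K_{t_1,\dots,t_k}$ is complete $k$-partite; $K_3 \subseteq G_1'$ forces $k \geq 3$ and $K_5 \not\subseteq G_1'$ (from $X_6$) forces $k \leq 4$. The case $k = 3$ is case~(iii). For $k = 4$, imposing that $G_1'$ contains no $K_{2,2,2,1}$, $K_{4,2,1,1}$, $K_{3,3,1,1}$ (coming from $X_9, X_8, X_7$ respectively) forces the sorted part sizes to be $(s,1,1,1)$ (case~(ii)) or $(s,2,1,1)$ with $s \in \{2,3\}$ (case~(i)). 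If exactly one $F_i = K_1 \cup K_2$, then $k = 2$: for $k \geq 3$, any two of the other $\overline{K}_{t_j}$ factors contribute an edge between them that is disjoint from $F_i$, inducing the forbidden $(K_1 \cup K_2) \vee K_2$. Hence $G_1' = \overline{K}_s \vee (K_1 \cup K_2) = \overline{K}_s \vee \overline{P}_3$, giving case~(iv). The main obstacle is the $k = 4$ subcase of the multipartite analysis, where the three forbidden 4-partite graphs from $X_7, X_8, X_9$ must be orchestrated together to pin down exactly the allowed tuples of part sizes.
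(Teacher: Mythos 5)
Your argument is correct and follows the paper's overall skeleton --- peel off the unique isolated vertex, observe that $Q=G_1'$ is a connected $\{P_4,2K_2\}$-free graph containing $K_3$, decompose it by the components of $\overline{Q}$, cap the number of join factors at $4$ via $K_1\cup K_5$, and settle the complete-multipartite case with the forbidden $K_{3,3,1,1}$, $K_{2,4,1,1}$, $K_{2,2,2,1}$ --- but your middle step is genuinely different and simpler. The paper constrains the join factors case by case: when some complement-component $\overline{Q_1}$ is connected, non-complete and has at least $4$ vertices, it proves a structural claim that $\overline{Q_1}$ must contain one of $K_1\vee(K_1\cup K_2)$, $K_2\vee\overline{K}_2$, $K_{1,3}$, and then kills each possibility with $H_{10}$, $H_{11}$, $H_{12}$. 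You instead observe up front that, because the isolated vertex $v_0$ turns $X_1,X_2,X_3$ into the prohibitions ``no induced $K_1\cup K_3$, $K_1\cup P_3$, $\overline{K}_2\cup K_2$ in $G_1'$,'' every join factor that is not an independent set must be exactly $K_1\cup K_2$; this makes the paper's hardest subcase vacuous and renders $H_{10},H_{11},H_{12}$ unnecessary for this theorem (the paper only invokes $X_1,X_2,X_3$ elsewhere, in Lemma 2.6 and Theorem 2.1). The one point you should spell out is the assertion that each factor $F_i$ is $K_1$ or disconnected: this is not automatic from writing a join, but follows from Lemma 2.2 applied to $F_i$ (a connected $F_i$ with at least two vertices has no isolated vertex and connected complement, hence contains $P_4$ or $2K_2$). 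With that line added, your proof is complete and, in my view, cleaner than the published one on the non-multipartite branch.
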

\begin{proof} Let  ${C_{2m + 1}}$ be a shortest odd cycle of ${G_1}$.
 By the same discussion as in the proof of Lemma \ref{3.4},
we have $m = 1$ and, hence ${G_1}$ contains $C_3$ as an induced
subgraph. By Lemma \ref{3.2}, ${G_1}$ has exactly one isolated
vertex, i.e., ${G_1}= {K_1} \cup Q,$ where $Q$ is a non-bipartite
graph without isolated vertex. Further,  $Q$ is connected since $Q$
contains no induced subgraph $2{K_2}$ by Lemma \ref{2.4}.

Since $Q$ is connected and contains no induced subgraph
isomorphic to $P_4$ or $2K_2$,  $\overline Q $ must be
disconnected by Lemma \ref{2.2}. If $\omega (\overline Q ) \ge 5$, then  $Q$
 contains ${K_5}$ as an induced subgraph.  Note that $G = {G _1} \vee {K_1}$
 and ${G_1}= {K_1} \cup Q.$ It follows that $G$ contains an induced subgraph $ ({K_1} \cup {K_5}) \vee {K_1}= H_{6},$
a contradiction. Therefore, $2 \le \omega (\overline Q ) \le 4.$

\noindent{\bf Case 1.} $\omega (\overline Q ) = 2$.

Let ${\overline Q_1}$ and ${\overline Q_2}$ be the two components of
$\overline Q$. Then $Q ={ Q_1}  \vee {Q_2}$. If ${\overline Q_1}$
and ${\overline Q_2}$ are both complete graphs, then $Q$ is a
complete bipartite graph, contradicting that $Q$ is non-bipartite.
Therefore, at least one of ${\overline Q_1}$ and ${\overline Q_2}$,
say ${\overline Q_1}$, is not complete.
 Then ${\overline Q_1}$ contains ${P_3}$ as an induced subgraph and, hence, $\left| {V({\overline Q_1})} \right| \geq3$.

\noindent{\bf Case 1.1.} $\left| {V({\overline Q_1})} \right| = 3$.

In this case, ${\overline Q_1}={P_3}$. If ${\overline Q_2}$ is not
complete, then $\overline Q$ contains ${P_3} \cup {\overline K_2} $
as an induced subgraph and, correspondingly, $Q$ has an induced
subgraph ${\overline P_3}  \vee {K_2}.$  This means that $G$
contains $({K_1} \cup ({\overline P_3}  \vee {K_2})) \vee
{K_1}=H_{13}$ as an induced subgraph since $G=({K_1} \cup Q) \vee
{K_1}$, a contradiction. Therefore, ${\overline Q_2}$ is complete
and, hence $\overline Q = {P_3} \cup {K_s}$ and $Q = {\overline P_3}
\vee {\overline K_s}$ where $s \ge 1.$ This yields ${G_1}= {K_1}
\cup ({\overline P_3}  \vee {\overline K_s})$, $s \ge 1$,  which is
indicated as (iv) in the theorem.

\noindent{\bf Case 1.2.} $\left| {V({\overline Q_1})} \right| \geq
4$.

Since $Q$ contains neither $2K_2$ nor $P_4$ as an induced subgraph,
${\overline Q_1}$ contains neither $\overline{2K_2}=C_4$ nor
$\overline{P_4}=P_4$ as an induced subgraph. Further, notice that
${\overline Q_1}$ is connected and contains ${P_3}$ as an induced
subgraph. We conclude that ${\overline Q_1}$ must contain one of
${K_1 \vee (K_1 \cup K_2)}$, ${K_2 \vee {\overline K_2}}$ and
${K_{1,3}}$ as an induced subgraph. Then $\overline Q $ contains
$({K_1 \vee (K_1 \cup K_2)}) \cup {K_1}$, $({K_2 \vee {\overline
K_2}}) \cup {K_1}$ or ${K_{1,3}} \cup {K_1}$ as an induced subgraph.
Correspondingly, $Q$ contains $(K_1 \cup P_{3}) \vee {K_1}$,
$({{\overline K_2} \cup K_2})  \vee {K_1}$ or ${\overline K_{1,3}}
\vee {K_1}$ as an induced subgraph. Therefore, $G$ contains $({K_1}
\cup ((K_1 \cup P_{3}) \vee {K_1})) \vee {K_1}=H_{10}$, $({K_1} \cup
(({{\overline K_2} \cup K_2})  \vee {K_1})) \vee {K_1}=H_{11}$ or
$({K_1} \cup ({\overline K_{1,3}} \vee {K_1})) \vee {K_1}=H_{12}$ as
an induced subgraph since $G= ({K_1} \cup Q) \vee {K_1}$. This is a
contradiction.

\noindent{\bf  Case 2.} $\omega (\overline Q)\geq 3$.

\noindent\textbf{Claim 1.} {\em  If $\omega (\overline Q ) \geq 3,$ then every
component of $\overline Q $ is a complete graph.}

Let ${\overline Q_1} ,{\overline Q_2},\ldots, {\overline Q_{\omega
(\overline Q )}}$ be the components of $\overline Q$. To the
contrary suppose that ${\overline Q_1}$ is not complete. Then
${\overline Q_1}$ contains an induced subgraph ${P_3}$ and, hence
$\overline Q $ contains ${P_3} \cup {K_1} \cup {K_1}$ as an induced
subgraph. Thus, $Q$ has an induced subgraph ${\overline P_3} \vee
{K_2}$ and, therefore, $G$ contains $({K_1} \cup ({\overline P_3}
\vee {K_2})) \vee {K_1}=H_{13}$ as an induced subgraph, a
contradiction. The claim follows.

\noindent{\bf  Case 2.1}. $\omega (\overline Q ) = 3$.

By Claim 1, the three components of $\overline Q$ are all complete.
We have ${ G_1}={K_1} \cup ({\overline K_{{s_1}}} \vee {\overline
K_{{s_2}}}  \vee {\overline K_{{s_3}}} )$ since ${G_1} = {K_1}\cup
Q$, where $1 \le {s_3} \le {s_2} \le {s_1}.$  This is indicated as
(iii) in the theorem.

\noindent{\bf  Case 2.2}. $\omega (\overline Q ) = 4$.

Let ${\overline Q_1},{\overline Q_2},{\overline Q_3},{\overline
Q_4}$ be the four components of $\overline Q$. By Claim 1,
${\overline Q_1},{\overline Q_2},{\overline Q_3},{\overline Q_4}$
are all complete. If three of ${\overline Q_1},$ ${\overline Q_2},$
${\overline Q_3}$ and ${\overline Q_4}$ are not $K_1$, then
$\overline Q$ contains an induced subgraph ${K_2} \cup {K_2}
\cup{K_2} \cup {K_1}$ and, hence $G$ has an induced subgraph $({K_1}
\cup ({\overline K_2} \vee {\overline K_2} \vee {\overline K_2} \vee
{K_1})) \vee {K_1}=H_{9}$. This is a contradiction. Hence, at most
two of ${\overline Q_1},$ ${\overline Q_2},$ ${\overline Q_3}$ and
${\overline Q_4}$ are not $K_1$.

If $ {\overline Q_1},$ $ {\overline Q_2},$ $ {\overline Q_3},$ $
{\overline Q_4}$ are all $K_1$, then ${G_1} = {K_1} \cup {K_4}={K_1}
\cup ({K_1} \vee {K_3}).$ This is indicated as (ii) in the theorem,
where $s = 1$.

 If exactly one of ${\overline Q_1},$ ${\overline Q_2},$ ${\overline Q_3}$ and ${\overline Q_4}$, say ${\overline Q_1}$, is not $K_1$, then ${\overline Q_1}={K_s}$, where $s \ge
2$. Therefore, $\overline Q={K_s} \cup {K_1} \cup {K_1} \cup
{K_1}={K_s} \cup {\overline K_3}$ and ${G_1}={K_1} \cup ({\overline
K_s}  \vee {K_3})$. This is indicated as (ii) in the theorem, where
$s\geq 2$.

If exactly two of ${\overline Q_1},$ ${\overline Q_2},$ ${\overline
Q_3}$ and ${\overline Q_4}$, say ${\overline Q_1}$ and ${\overline
Q_2}$, are not $K_1$, then ${\overline Q_1}={K_r},$ ${\overline
Q_2}= {K_s},$
 ${Q_3}={K_1}$ and ${Q_4}= {K_1}$, where $r,s \ge 2$. Therefore,  $\overline Q$ contains
an induced subgraph ${K_r} \cup {K_s} \cup {\overline K_2}$ and, hence $Q$ contains an induced
subgraph ${\overline K_r}  \vee {\overline K_s} \vee {K_2}$. Without loss of generality, we assume $r\leq s$.  We claim
 that $r=2$ and $s\leq 3$. Suppose to the contrary that $r\geq 3$ or $s\ge 4$. Since $G= ({K_1} \cup Q) \vee
{K_1}$, then $G$ contains $({K_1} \cup ({\overline K_3}  \vee
{\overline K_3} \vee {K_2}))\vee {K_1}=H_{7}$ or $({K_1} \cup
({\overline K_2} \vee {\overline K_4} \vee {K_2})) \vee{K_1}=H_{8}$
as an induced subgraph. This is a contradiction. As a result, we
have either ${G_1} ={K_1} \cup ({\overline K_2} \vee {\overline K_2}
\vee {K_2})$ or ${G_1} = {K_1} \cup ({\overline K_2}  \vee
{\overline K_3} \vee {K_2}),$ which is indicated as (i) in the
theorem.
\end{proof}

Note that ${G _1} \vee {K_1}$ is an induced subgraph of ${ G _1}
\vee {\overline K_t}$. So by Lemma \ref{2.1} and Lemma \ref{3.4}, if
$G={G _1} \vee {\overline K_t},\lambda_2(G)<1/2$ and ${G _1}$ is
non-bipartite, then  ${G _1}$ must have one of the four forms as
indicated in Theorem \ref{3.5}. In the following we will determine
the exact values of $t$ for the four cases.
\begin{lemma}\label{3.6} Let $G= ({K_1} \cup({\overline K_s}  \vee {\overline K_2}  \vee {K_2})) \vee
{\overline K_t},2 \le s \le 3$. Then $\lambda_2(G)<1/2$ if and only
if $t = 1.$
\end{lemma}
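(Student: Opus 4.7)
Set $Y=K_1\cup(\overline{K_s}\vee\overline{K_2}\vee K_2)$, so $G=Y\vee\overline{K_t}$. Since $Y\vee\overline{K_t}$ is an induced subgraph of $Y\vee\overline{K_{t+1}}$, Lemma~\ref{2.1} gives that $\lambda_2(G)$ is non-decreasing in $t$. The lemma therefore reduces to showing, for each $s\in\{2,3\}$, that
\[\lambda_2(Y\vee K_1)<1/2\quad\text{and}\quad \lambda_2(Y\vee\overline{K_2})\ge 1/2.\]

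For both inequalities I would exploit the module structure of $G$: the five vertex sets $\overline{K_s}$, $\overline{K_2}$, $K_2$, the isolated $K_1$ and $\overline{K_t}$ consist of twin vertices, so the spectrum of $G$ is the union of the five eigenvalues of the $5\times 5$ quotient matrix $B(s,t)$ together with internal eigenvalues $0$ (of multiplicity $s+t-1$, one from each co-edge module) and $-1$ (of multiplicity $1$, from the module $K_2$). Equivalently,
\[\chi(G,\lambda)=\lambda^{s+t-1}(\lambda+1)\,p_{s,t}(\lambda),\]
with $p_{s,t}$ a quintic in $\lambda$ depending on $s$ and $t$. This is exactly the kind of factorisation that drives the proof of Theorem~\ref{3.3}.

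For the $t=1$ half I would follow the Theorem~\ref{3.3} scheme: specialise to $s\in\{2,3\}$ and evaluate $p_{s,1}$ at test points such as $-\infty,\,-1,\,0,\,1/2,\,+\infty$. From the resulting sign changes the five roots of $p_{s,1}$ are localised into intervals, and only the Perron root should exceed $1/2$; combined with the internal eigenvalues $0$ and $-1$, this forces $\lambda_2(G)<1/2$. For the $t=2$ half I would invoke Lemma~\ref{iff} contrapositively: to obtain $\lambda_2(G)\ge 1/2$ it suffices to check $\chi(G,1/2)\ge 0$, which after cancelling the explicit factor reduces to verifying the sign of $p_{s,2}(1/2)$ for $s=2$ and $s=3$.

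The main obstacle is computational rather than conceptual. A short check shows that the maximum independent set of $Y\vee\overline{K_2}$ has size only $1+s\le 4$, which rules out a forbidden-induced-subgraph argument against any of the larger entries $H_7,H_8,H_9$ of Table~1; there is no shortcut through Proposition~1, and one really does have to expand a $5\times 5$ determinant that is symbolic in $s$ and then carry out a careful sign analysis of the resulting quintic. In keeping with the conventions of this paper, I would relegate these expansions to the Appendix and display only the key polynomial values in the main text.
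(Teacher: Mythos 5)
Your overall route coincides with the paper's: both rest on the factorization $\chi(G,\lambda)=\lambda^{s+t-1}(\lambda+1)\,p_{s,t}(\lambda)$ and on evaluating $\chi$ at $1/2$ through Lemma~\ref{iff}. The necessity direction is fine: the paper evaluates $\chi(G(s,t),1/2)$ for general $t$, obtaining $140t-145$ (for $s=2$) and $208t-209$ (for $s=3$) up to a positive factor, which forces $t=1$ directly; your monotonicity reduction to the single case $t=2$ is a harmless variant, and the contrapositive use of Lemma~\ref{iff} there is legitimate.

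The gap is in the sufficiency direction ($t=1$). Lemma~\ref{iff} requires \emph{both} $\chi(G,1/2)<0$ and $\lambda_3(G)<1/2$, and your sign analysis as described does not deliver the latter. For $s=2$, $t=1$ the quintic is $p(\lambda)=\lambda^5-\lambda^4-19\lambda^3-31\lambda^2-4\lambda+12$, whose signs at your test points $-\infty,-1,0,1/2,+\infty$ are $-,+,+,-,+$. The parity constraints coming from these sign changes admit the root distribution $(1,0,1,3)$ over the four intervals, i.e.\ three roots in $(1/2,\infty)$ --- precisely the situation you must exclude. (In reality the three unaccounted roots lie in $(-\infty,-1)$, one of them being $-2$ exactly, so you would need extra test points such as $-2$ and $-3/2$ to pin them down.) A cleaner repair, and the one the paper itself uses in the neighbouring Lemmas~\ref{3.7} and~\ref{3.8}, is to delete the isolated vertex of $K_1\cup(\overline{K_s}\vee\overline{K_2}\vee K_2)$: the remaining graph $\overline{K_s}\vee\overline{K_2}\vee K_1\vee K_1\vee \overline{K_1}$ is complete multipartite, so its second eigenvalue is $0$ by Lemma~\ref{2.3}, whence $\lambda_3(G)\le 0<1/2$ by interlacing; together with $\chi(G,1/2)<0$ (i.e.\ $140\cdot 1-145<0$ and $208\cdot 1-209<0$) this finishes via Lemma~\ref{iff}. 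The paper instead simply reports the numerical values $\lambda_2\approx 0.4968$ and $\approx 0.4996$ for $t=1$.
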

\begin{proof} By a direct calculation (see Appendix 2), we have\\
$\chi (G,\lambda ) =$
\begin{equation}\label{eigen1}
 {\lambda ^{s + t - 1}}(\lambda  + 1)\left({\lambda ^5} - {\lambda ^4} - (st + 5t + 4s + 4){\lambda ^3} - (7st + 6s + 5t){\lambda ^2} - (4st - 4t)\lambda  + 6st\right).
\end{equation}

For specificality, we write $G=G(s,t)$.   If $s = 2$, then by (\ref{eigen1}) we have
$$\chi
\left( {G(2,t),\frac{1}{2}} \right) = {\left( {\frac{1}{2}}
\right)^{t + 1}}\left( {\frac{3}{2}} \right) \times
\frac{1}{{32}}(140t - 145).$$ So by Lemma \ref{iff}, if
$\lambda_2(G(2,t))<1/2$ then $\chi \left( G(2,t),1/2 \right)<0$,
meaning that $t < 2$, i.e., $t=1$. If $s = 3$, then
$$\chi \left( {G(3,t),\frac{1}{2}}
\right) = {\left( {\frac{1}{2}} \right)^{t + 2}}\left( {\frac{3}{2}}
\right) \times \frac{1}{{32}}(208t - 209).$$ Similarly, again by
Lemma  \ref{iff}, if  $\lambda_2(G(3,t))<1/2$  then $t=1$.

Conversely, assume $t=1$. If $s=2$, then by a direct calculation we
have $\lambda_2(G)\approx 0.4968<1/2$ and if $s=3$, then
$\lambda_2(G)\approx 0.4996<1/2$. This completes the proof.
\end{proof}

\begin{lemma}\label{3.7} Let $G=({K_1} \cup
({\overline K_s}  \vee {K_3})) \vee {\overline K_t}$. Then
$\lambda_2(G)<1/2$ if and only if $t = 1$.
\end{lemma}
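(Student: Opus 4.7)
Following the template of Lemma~\ref{3.6}, I would first compute $\chi(G,\lambda)$ using the equitable partition of $G$ whose cells are the single vertex $K_1$ in the first summand, the vertex set of $\overline K_s$, the vertex set of $K_3$, and the vertex set of $\overline K_t$. A direct expansion of the associated $4\times 4$ quotient matrix (to be placed in an appendix) yields the factorization
\[
\chi(G,\lambda) = \lambda^{s+t-2}(\lambda+1)^{2}\,p(\lambda),\quad p(\lambda)=\lambda^{4}-2\lambda^{3}-(st+3s+4t)\lambda^{2}-2t(2s-1)\lambda+3st.
\]
The $s+t-2$ extra zero eigenvalues arise from vectors supported on $\overline K_s$ or $\overline K_t$ that sum to zero, and the two eigenvalues equal to $-1$ come from zero-sum vectors on $K_3$.

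Substituting $\lambda=1/2$ and simplifying then gives
\[
\chi\!\left(G,\tfrac{1}{2}\right) = \tfrac{27}{64}\!\left(\tfrac{1}{2}\right)^{\!s+t-2}\!\bigl(4s(t-1)-1\bigr).
\]
For the ``only if'' direction, suppose $\lambda_2(G)<1/2$. By Lemma~\ref{iff}, $\chi(G,1/2)<0$, forcing $4s(t-1)<1$; since $s\geq 1$ and $t\geq 1$ are integers, this is possible only when $t=1$.

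For the ``if'' direction, set $t=1$ and count the eigenvalues of $G$ lying in $[1/2,+\infty)$. None of the $s-1$ zeros or the two copies of $-1$ qualifies, so it suffices to examine the four roots of $p(\lambda)$. Descartes' rule of signs applied to $p$ (coefficient signs $+,-,-,-,+$) shows that $p$ has at most two positive real roots, while $p(0)=3s>0$, $p(1/2)=-3/16<0$, $p(+\infty)=+\infty$, together with the fact that $\lambda_1(G)>0$ must be a root, force exactly two positive roots---one lying in $(0,1/2)$ and the other (equal to $\lambda_1(G)$) in $(1/2,+\infty)$. Consequently $\lambda_1(G)$ is the only eigenvalue of $G$ that lies in $[1/2,+\infty)$, and so $\lambda_2(G)<1/2$.

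The only nontrivial step is the determinant computation yielding the explicit form of $p(\lambda)$, which parallels in structure and tedium the Appendix~2 calculation underlying Lemma~\ref{3.6}; once $p(\lambda)$ is in hand, every subsequent step reduces to a short sign inspection, so I do not foresee any further obstacle.
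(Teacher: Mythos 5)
Your proposal is correct, and the characteristic polynomial you obtain agrees with the paper's Appendix~3 computation ($-2t(2s-1)\lambda$ is the same as $-(4st-2t)\lambda$), as does the value $\chi(G,1/2)=\frac{27}{64}(1/2)^{s+t-2}(4s(t-1)-1)$. The ``only if'' direction is essentially identical to the paper's: both reduce to the sign of $\chi(G,1/2)$ via Lemma~\ref{iff}. The ``if'' direction, however, is genuinely different. You argue directly on the quartic factor: Descartes' rule of signs caps the number of positive roots at two, the sign pattern $p(0)>0$, $p(1/2)<0$, $p(+\infty)=+\infty$ exhibits exactly two, and hence only one root of $\chi$ lies in $[1/2,+\infty)$; this is a complete and self-contained root count (all roots are real since $p$ divides the characteristic polynomial of a symmetric matrix). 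The paper instead deletes the isolated vertex of the $K_1\cup(\overline{K}_s\vee K_3)$ block to obtain the complete multipartite graph $H=\overline{K}_s\vee K_1\vee K_1\vee K_1\vee K_1$, concludes $\lambda_3(G)\le\lambda_2(H)\le 0$ by Lemma~\ref{2.1} and Lemma~\ref{2.3}, and then invokes the converse half of Lemma~\ref{iff} together with $\chi(G,1/2)<0$. Your argument is more elementary but tailored to this specific quartic; the paper's interlacing device is the one it reuses throughout (Lemmas~\ref{3.8}, \ref{3.9}, \ref{b5}--\ref{b7}), since it always reduces the verification to the single inequality $\chi(G,1/2)<0$ regardless of the degree of the surviving factor. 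Either route is acceptable here.
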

\begin{proof} By a direct calculation (see Appendix 3), we have
$$\chi (G,\lambda ) = {\lambda ^{s + t - 2}}{(\lambda  + 1)^2}\left({\lambda ^4} - 2{\lambda ^3} - (st + 4t + 3s){\lambda ^2} - (4st - 2t)\lambda  + 3st\right).$$
So by Lemma \ref{iff}, if $\lambda_2(G)<1/2$, then
$$\chi \left(G,\frac{1}{2}\right) = {\left(\frac{1}{2}\right)^{s + t - 2}}{\left(\frac{3}{2}\right)^2}\left(\frac{3}{4}\right)\left(st-s-\frac{1}{4}\right)<0,$$
 meaning that $t=1$.

 We now assume that $t=1$. Let $H={\overline K_s}  \vee {K_3}\vee {\overline K_t}={\overline K_s} \vee {K_1} \vee {K_1}  \vee {K_1}\vee K_1$.  If $s=1$, then $H$ is a complete graph and, hence, $\lambda_2(H)=-1<1/2$. If $s>1$, then by Lemma \ref{2.3}, we have $\lambda_2(H)=0<1/2$. Further, note that $H$ is an induced subgraph of $G$ and has one vertex less than $G$. So by Lemma \ref{2.1}, $\lambda_3(G)\leq \lambda_2(H)<1/2$.  Our lemma follows by Lemma \ref{iff}.
\end{proof}

\begin{lemma}\label{3.8} Let $G=({K_1} \cup ({\overline K_{{s_1}}}  \vee {\overline K_{{s_2}}} \vee
{\overline K_{{s_3}}} )) \vee
{\overline K_t},1 \le {s_3} \le {s_2} \le {s_1}$. \\
(i). If $s_1=s_2=s_3=1$ then $\lambda_2(G)<1/2$ for any $t$ with $t\geq1$; and \\
(ii). if $s_1>1$, then  $\lambda_2(G)<1/2$ if and only if $t
<\frac{\alpha(s_1,s_2,s_3)}{\beta(s_1,s_2,s_3)}$, where
$$\alpha(s_1,s_2,s_3)={16{s_1}{s_2}{s_3} + 4({s_1}{s_2} + {s_2}{s_3} +{s_1}{s_3}) - 1}$$
and
$$\beta(s_1,s_2,s_3)={16{s_1}{s_2}{s_3} - 4({s_1} + {s_2} + {s_3}+1)}.$$
\end{lemma}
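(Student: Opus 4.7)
The plan is to follow the template of Lemmas~\ref{3.6} and~\ref{3.7}: reduce to verifying the sign of $\chi(G,1/2)$ via Lemma~\ref{iff}, and handle the $\lambda_3$ hypothesis of that lemma separately by interlacing.

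First, I would observe that deleting the isolated $K_1$-vertex $v_0$ from the inner graph leaves the induced subgraph $H=\overline{K_{s_1}}\vee\overline{K_{s_2}}\vee\overline{K_{s_3}}\vee\overline{K_t}$, which is a complete $4$-partite graph on $s_1+s_2+s_3+t\ge 4$ vertices with all parts nonempty. By Lemma~\ref{2.3}, $\lambda_2(H)=0$, so Cauchy interlacing (Lemma~\ref{2.1}) gives $\lambda_3(G)\le\lambda_2(H)=0<1/2$. Consequently, by Lemma~\ref{iff}, $\lambda_2(G)<1/2$ is equivalent to $\chi(G,1/2)<0$, and all subsequent work is about computing this sign.

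Next, I would compute $\chi(G,\lambda)$ using the obvious equitable partition into the five twin classes $\{v_0\}$, $\overline{K_{s_1}}$, $\overline{K_{s_2}}$, $\overline{K_{s_3}}$, $\overline{K_t}$. Because each of the last four classes is an independent set of twins, any vector summing to zero on one class and vanishing elsewhere is annihilated by $A(G)$, contributing $(s_1-1)+(s_2-1)+(s_3-1)+(t-1)=s_1+s_2+s_3+t-4$ zero eigenvalues. The remaining five eigenvalues come from the $5\times 5$ quotient matrix $B$, so
$$\chi(G,\lambda)=\lambda^{\,s_1+s_2+s_3+t-4}\,\det(\lambda I-B).$$
Since $t$ appears only in the fifth column of $B$ (not in its fifth row), expansion of $\det(\lambda I-B)$ along that column shows the quintic is linear in $t$. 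Substituting $\lambda=1/2$ and collecting terms (details to be placed in an appendix, parallel to Appendices~1--3) should yield a clean form
$$\chi(G,1/2)=C\cdot(\beta t - \alpha),$$
with an explicit positive factor $C$ in $s_1,s_2,s_3,t$ and with $\alpha,\beta$ exactly as defined in the statement.

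Granted this, the two parts follow mechanically. For (i), substituting $s_1=s_2=s_3=1$ gives $\beta=16-16=0$ and $\alpha=16+12-1=27$, so $\chi(G,1/2)=-27C<0$ for every $t\ge 1$, whence $\lambda_2(G)<1/2$. For (ii), I would verify that $\beta>0$ whenever $s_1>1$: the minimum $\beta=12$ over the feasible region $1\le s_3\le s_2\le s_1$, $s_1\ge 2$, is attained at $(s_1,s_2,s_3)=(2,1,1)$, and $\beta$ increases monotonically in each $s_i$ beyond that. Hence $\chi(G,1/2)<0$ iff $\beta t<\alpha$ iff $t<\alpha/\beta$, giving the claimed equivalence. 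The only genuine obstacle is the bookkeeping in the $5\times 5$ determinant expansion and the simplification at $\lambda=1/2$ into the form $C(\beta t-\alpha)$ with exactly the stated $\alpha$ and $\beta$; all remaining ingredients (interlacing, Lemma~\ref{2.3}, and Lemma~\ref{iff}) are already in hand.
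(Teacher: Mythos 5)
Your proposal is correct and follows essentially the same route as the paper: both reduce $\chi(G,\lambda)$ to $\lambda^{s_1+s_2+s_3+t-4}$ times a quintic that is linear in $t$ (the paper via explicit row/column operations in Appendix 4, you via the twin/equitable-partition argument), evaluate at $\lambda=1/2$ to obtain a positive multiple of $\beta t-\alpha$, and combine Lemma \ref{iff} with the interlacing bound $\lambda_3(G)\le\lambda_2(H)=0$ furnished by the complete $4$-partite subgraph $H$ and Lemma \ref{2.3}. Your derivation of (i) as the degenerate case $\beta=0$, $\alpha=27$ of the same formula, and your check that $\beta>0$ for $s_1\ge 2$, are both sound and in fact slightly more explicit than the paper's own text.
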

\begin{proof} (i) follows directly by a direct calculation. \\
(ii). By a direct calculation (see Appendix 4), we have
\begin{equation}\label{eigen3}
\chi \left(G,\lambda\right)= {\lambda ^{{s_1} + {s_2} + {s_3} + t - 4}}\left({\lambda ^5} - ({s_1}{s_2} + {s_1}{s_3} + {s_2}{s_3} + {s_1}t + {s_2}t + {s_3}t + t){\lambda ^3}\right.$$
$$\left. -2({s_1}{s_2}{s_3} + {s_1}{s_2}t + {s_1}{s_3}t + {s_2}{s_3}t){\lambda ^2} +
        ({s_1}{s_2}t + {s_1}{s_3}t + {s_2}{s_3}t - 3{s_1}{s_2}{s_3}t)\lambda  + 2{s_1}{s_2}{s_3}t\right).
\end{equation}
Hence,
$$\chi\left(G,\frac{1}{2}\right) =\left(\frac{1}{2}\right)^{{s_1} + {s_2} + {s_3} + t -4}\left(\frac{1}{32}\right)(\beta(s_1,s_2,s_3) t-\alpha(s_1,s_2,s_3)).$$
Since $s_1>1$, we have $\beta(s_1,s_2,s_3)>0$. So by Lemma
\ref{iff}, if $\chi(G,1/2)< 0$, then $\beta(s_1,s_2,s_3) t<
\alpha(s_1,s_2,s_3),$  i.e.,  $t
<\alpha(s_1,s_2,s_3)/\beta(s_1,s_2,s_3)$.

 Conversely, let $H={\overline K_{{s_1}}}  \vee {\overline K_{{s_2}}} \vee{\overline K_{{s_3}}}\vee{\overline K_t}$. It is clear that $H$ is an induced subgraph of $G$ and has one vertex less than $G$. The remaining discussion is completely the same as that for Lemma \ref{3.7}.
\end{proof}
\begin{lemma}\label{3.9} Let $G=({K_1} \cup
({\overline K_s}  \vee {\overline P_3}))  \vee {\overline K_t}, s\ge
1$. Then $\lambda_2(G)<1/2$ if and only if $t =1.$
\end{lemma}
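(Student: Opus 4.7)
The plan is to follow the two-stage recipe used in Lemmas \ref{3.7} and \ref{3.8}. In stage one I would compute $\chi(G,\lambda)$ explicitly (with the routine expansion relegated to an appendix) and apply Lemma \ref{iff} to extract the forward direction. In stage two, for $t=1$, I would exhibit an induced subgraph $H$ of $G$ on $|V(G)|-1$ vertices satisfying $\lambda_2(H)<1/2$, then invoke Cauchy interlacing (Lemma \ref{2.1}) together with the converse part of Lemma \ref{iff} to close the argument.

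Stage one: the $s$ vertices of $\overline{K_s}$ have identical neighbourhoods and are pairwise non-adjacent, contributing the eigenvalue $0$ with multiplicity $s-1$; the $t$ vertices of $\overline{K_t}$ contribute $0$ with multiplicity $t-1$; and the two vertices of the $K_2$ inside $\overline{P_3}$ are adjacent with otherwise identical neighbourhoods, contributing the eigenvalue $-1$ once. Consequently I expect a factorisation $\chi(G,\lambda)=\lambda^{s+t-2}(\lambda+1)\,q(\lambda)$ with $q$ a quintic whose coefficients are polynomials in $s$ and $t$. Substituting $\lambda=1/2$ should reduce $\chi(G,1/2)$ to a strictly positive factor times a linear expression $\alpha(s)\,t-\beta(s)$ with $\alpha(s)>0$. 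Lemma \ref{iff} then turns $\lambda_2(G)<1/2$ into $t<\beta(s)/\alpha(s)$; verifying that this ratio is strictly less than $2$ for every $s\ge 1$ forces $t=1$, as required.

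Stage two, with $t=1$: let $H:=\overline{K_s}\vee\overline{P_3}\vee K_1$, obtained by deleting the isolated vertex of $G_1$. Then $|V(H)|=|V(G)|-1$, Cauchy interlacing gives $\lambda_3(G)\le\lambda_2(H)$, and the already-established $\chi(G,1/2)<0$ (for $t=1$) together with Lemma \ref{iff} will finish the proof as soon as $\lambda_2(H)<1/2$ is known. The main obstacle is exactly this last bound. In Lemmas \ref{3.7} and \ref{3.8} the analogous $H$ was complete multipartite, so Lemma \ref{2.3} gave $\lambda_2(H)=0$ at once; here, however, $\overline{P_3}=K_2\cup K_1$ contributes the edge $b_1b_2$, so $H$ is not complete multipartite and Lemma \ref{2.3} is unavailable.

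To handle this I would again use the twin-vertex symmetry of $H$: the spectrum of $H$ decomposes into the eigenvalues of a $4\times 4$ quotient matrix on the four symmetry classes $\{a_1,\ldots,a_s\}$, $\{b_1,b_2\}$, $\{c\}$, $\{d\}$ (where $a_i$'s are the vertices of $\overline{K_s}$, $b_1b_2$ is the edge inside $\overline{P_3}$, $c$ is the remaining vertex of $\overline{P_3}$, and $d$ is the $K_1$ factor) together with $s-1$ copies of $0$ and one copy of $-1$. Since $0,-1<1/2$, it is enough to show that at most one eigenvalue of the quotient exceeds $1/2$. I would compute its characteristic polynomial $p(\lambda)$ in closed form, verify $p(1/2)<0$ (so an odd number of eigenvalues of the quotient lie above $1/2$), and rule out three such eigenvalues by checking the signs of $p$ at convenient test points such as $\lambda=-1$ and $\lambda=0$; together with the behaviour as $\lambda\to\pm\infty$, this localises exactly one root of $p$ in $(1/2,\infty)$ and the remaining three below, yielding $\lambda_2(H)<1/2$ and completing the converse.
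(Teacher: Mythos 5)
Your stage one is exactly the paper's argument: the same factorisation $\chi(G,\lambda)=\lambda^{s+t-2}(\lambda+1)q(\lambda)$ with $q$ quintic appears there, and $\chi(G,1/2)$ reduces to a positive factor times $4s(t-1)-1$, forcing $t=1$ via Lemma \ref{iff}. Your stage two, however, genuinely diverges. The paper proves the converse by induction on $s$: it checks $s=1$ numerically ($\lambda_2\approx 0.4897$), then notes $G(m)$ is an induced subgraph of $G(m+1)$ on one fewer vertex, so interlacing gives $\lambda_3(G(m+1))\le\lambda_2(G(m))<1/2$, and Lemma \ref{iff} closes the step using the already-known $\chi(G(m+1),1/2)<0$. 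This recycles the stage-one computation and needs no new spectral analysis. You instead delete the isolated vertex and analyse $H=\overline{K_s}\vee\overline{P_3}\vee K_1$ directly through its equitable-partition quotient; this is self-contained for each $s$ and correctly identifies why Lemma \ref{2.3} is unavailable, but it costs you a fresh quartic. The quotient here is
$$p(\lambda)=\lambda^4-\lambda^3-(4s+3)\lambda^2+(1-4s)\lambda+2s,$$
and indeed $p(1/2)=-(16s+5)/16<0$, so your criterion is the right one.

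The one soft spot is your proposed root localisation. With your test points one finds $p(0)=2s>0$ and $p(-1)=2(s-1)\ge 0$, so together with $p(\pm\infty)=+\infty$ and $p(1/2)<0$ you certify one root in $(0,1/2)$ and one in $(1/2,\infty)$, but the remaining two roots are not pinned down: no sign change is detected on $(-\infty,-1)$ or $(-1,0)$, and a priori both could lie above $1/2$ (the sign pattern at $1/2$ and $+\infty$ only forces an odd count there). You need one more evaluation, e.g.\ $p(-2)=10-6s<0$ for $s\ge 2$, which places the last two roots in $(-\infty,-2)$ and $(-2,-1)$; for $s=1$ you must instead factor out the root $\lambda=-1$ and analyse the residual cubic $\lambda^3-2\lambda^2-5\lambda+2$ (which has one root in each of $(-\infty,-1)$, $(0,1/2)$, $(1/2,\infty)$). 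With that repair your converse is complete; alternatively, you could shortcut the localisation entirely by deleting one further vertex of the $K_2$ in $\overline{P_3}$ to reach a complete multipartite graph, bounding $\lambda_3(H)$ by interlacing and applying Lemma \ref{iff} to $H$ itself, which is closer in spirit to the paper's other lemmas.
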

\begin{proof}  By a direct calculation (see Appendix 5), we have
$$\chi (G,\lambda ) = {\lambda ^{s + t - 2}}(\lambda  + 1)\left({\lambda ^5} - {\lambda ^4}
- (st + 3s + 4t){\lambda ^3} - (5st - s - 2t){\lambda ^2} +
5st\lambda- st\right).$$

Therefore, $\chi (G,1/2)= {\left(\frac{1}{2}\right)^{s + t -
2}}\left(\frac{1}{2}+1\right)\frac{1}{{32}}(4s(t - 1) - 1)$. By
Lemma \ref{iff}, if $\lambda_2(G)<1/2$ then $\chi (G,1/2)<0$ and,
hence,
 $t=1$.

 Conversely, assume $t=1$. We prove $\lambda_2(G)<1/2$  by induction on $s$.  When $s=1$, one can see that $\lambda_2(G)\approx 0.4897<1/2$.

Write $G$ specifically by $G(s)$ and  assume that
$\lambda_2(G(s))<1/2$ for $s\leq m$, where $m\geq 1$. We note that
$G(m)$ is an induced subgraph of $G(m+1)$ and has one vertex less
than $G(m+1)$. So by the induction hypothesis and Lemma \ref{2.1},
$\lambda_3(G(m+1))\leq\lambda_2(G(m))<1/2$. Again by Lemma \ref{iff}
we have $\lambda_2(G(m+1))<1/2$, which completes the proof.
\end{proof}
By Theorem \ref{3.5} and the lemmas above, we have the following
result.

\begin{theorem}\label{3.10}
Let $G={G_1} \vee{G_2} \vee \cdots \vee {G_k},$ $k \ge 2$, where at
least one of ${G _i}$ is non-bipartite. Then $\lambda_2(G)<1/2$ if
and only if one of the
following holds:\\
(i). $G = ({K_1} \cup ({\overline K_s}  \vee {\overline K_2}
\vee {K_2})) \vee {K_1},2 \le s \le 3$;\\
(ii). $G = ({K_1} \cup ({\overline K_s}  \vee {K_3})) \vee
{K_1},s \ge 1$;\\
(iii). $G = ({K_1} \cup {K_3}) \vee {\overline K_t}, t \ge 1$;\\
(iv). $G =({K_1} \cup ({\overline K_{{s_1}}}  \vee {\overline
K_{{s_2}}}  \vee {\overline K_{{s_3}}} )) \vee {\overline K_t}$,
${s_1}\geq {s_2}\geq {s_3}\geq 1,$ ${s_1} > 1,$ $t
<\frac{\alpha(s_1,s_2,s_3)}{\beta(s_1,s_2,s_3)}$;\\
(v). $G = ({K_1} \cup ({\overline K_s}  \vee {\overline P_3} ))
\vee {K_1}, s \ge 1$.
\end{theorem}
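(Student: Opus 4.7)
The plan is to assemble Theorem \ref{3.10} directly from the machinery built up in this subsection: Lemma \ref{3.4} will fix the global shape of $G$, Theorem \ref{3.5} will enumerate the possible non-bipartite ``core,'' and Lemmas \ref{3.6}--\ref{3.9} will pin down the exact range of $t$ in each case. No new calculation is needed; the proof is a pure synthesis of earlier results.

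First I would apply Lemma \ref{3.4}: since some $G_i$ is non-bipartite, we get $k=2$ and, after relabeling, $G = G_1 \vee \overline{K}_t$ with $G_1$ non-bipartite and $t \ge 1$. Next, because $G_1 \vee K_1$ is an induced subgraph of $G$ (take any single vertex from the $\overline{K}_t$ factor), the Cauchy interlace (Lemma \ref{2.1}) gives $\lambda_2(G_1 \vee K_1) \le \lambda_2(G) < 1/2$. So Theorem \ref{3.5} applies to $G_1 \vee K_1$ and forces $G_1$ into one of the four forms (i)--(iv) listed there.

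I would then dispatch each of these four possibilities by the matching lemma. Form (i) of Theorem \ref{3.5} together with Lemma \ref{3.6} yields Theorem \ref{3.10}(i) (forcing $t = 1$); form (ii) with Lemma \ref{3.7} yields Theorem \ref{3.10}(ii) (again forcing $t = 1$); form (iv) with Lemma \ref{3.9} yields Theorem \ref{3.10}(v); and form (iii) with Lemma \ref{3.8} splits naturally into Theorem \ref{3.10}(iii) when $s_1=s_2=s_3=1$ (all $t \ge 1$ admissible) and Theorem \ref{3.10}(iv) when $s_1 > 1$ (with the sharp bound $t < \alpha(s_1,s_2,s_3)/\beta(s_1,s_2,s_3)$). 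The converse direction in each case is already packaged inside the corresponding lemma, so both implications follow simultaneously.

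The main thing to be careful about is not overlooking the two-way split of Theorem \ref{3.5}(iii) via Lemma \ref{3.8}(i) versus (ii); this is the only point where a single structural form in Theorem \ref{3.5} produces two distinct items of Theorem \ref{3.10}, so the quantifier $s_1 > 1$ in Theorem \ref{3.10}(iv) must be stated explicitly to keep items (iii) and (iv) disjoint. Beyond this bookkeeping step there is no real obstacle, since all the computational work has been absorbed into the preceding lemmas.
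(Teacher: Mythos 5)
Your proposal is correct and follows essentially the same route as the paper, which itself presents Theorem \ref{3.10} as a direct synthesis: Lemma \ref{3.4} forces $k=2$ and $G=G_1\vee\overline{K_t}$, the interlacing remark after Theorem \ref{3.5} (since $G_1\vee K_1$ is an induced subgraph of $G_1\vee\overline{K_t}$) restricts $G_1$ to the four listed forms, and Lemmas \ref{3.6}--\ref{3.9} supply both directions of the bound on $t$, with Lemma \ref{3.8} splitting form (iii) of Theorem \ref{3.5} into items (iii) and (iv) exactly as you describe. No gaps.
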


\subsection{${G_i}$ is bipartite for any $i\in\{1,2,\ldots,k\}$}
In this subsection, we consider the case that ${G_i}$ is bipartite
for any $i\in\{1,2,\ldots,k\}$. If ${G_i}$ is empty for any
$i\in\{1,2\ldots,k\}$, then $G$ is a $k$-partite graph and, hence,
$\lambda_2(G)<1/2$. In the following, without loss of generality we
always assume that ${G_1}$ is not empty.
\begin{lemma}\label{b1}
Let $G={G_1} \vee{G_2} \vee \cdots \vee {G_k}$ $(k\geq 2)$. If
${G_i}$ is bipartite for every $i\in\{1,2,\ldots,k\}$ and
$\lambda_2(G)<1/2$, then for any $i\in\{1,2,\ldots,k\}$, ${ G_i}$ is
empty or ${G_i}={\overline K_2}\cup K_2$ or ${G_i}=K_1\cup K_{s,t},
t\geq s\geq 1$.
\end{lemma}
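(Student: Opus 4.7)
The plan is to follow the structural machinery already developed. By Lemma \ref{3.1}(ii), every non-empty $G_i$ contains an isolated vertex, and by Lemma \ref{3.2} the number of isolated vertices in $G_i$ is exactly one when $k\geq 3$, and at most two when $k=2$. If $G_i$ is empty we are done; if $G_i$ has exactly two isolated vertices, then $k=2$ and Theorem \ref{3.3} forces $G_i=\overline{K_2}\cup K_2$ immediately.

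The remaining case, which is the heart of the lemma, is when $G_i$ has exactly one isolated vertex, so $G_i=K_1\cup Q$ with $Q$ bipartite and having no isolated vertex. First I would show that $Q$ is connected: if $Q$ split into two components each containing an edge, then $2K_2$ would be induced in $Q\subseteq G$, contradicting Lemma \ref{2.4}. Since $Q$ contains no induced $P_4$ or $2K_2$ (again by Lemma \ref{2.4}) and has no isolated vertex, Lemma \ref{2.2} forces $\overline{Q}$ to be disconnected.

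Writing $\overline{Q_1},\ldots,\overline{Q_r}$ for the components of $\overline{Q}$, we obtain $Q=Q_1\vee\cdots\vee Q_r$ with $r\geq 2$. The key step is to exploit bipartiteness: if some $Q_j$ contained an edge $uv$, then for any $w\in V(Q_\ell)$ with $\ell\neq j$ the vertices $u,v,w$ would form a triangle in $Q$, contradicting bipartiteness. Hence every $Q_j$ is edgeless, so $Q$ is a complete multipartite graph $K_{s_1,\ldots,s_r}$. Bipartiteness then forces $r=2$, giving $Q=K_{s,t}$ for some $t\geq s\geq 1$ and therefore $G_i=K_1\cup K_{s,t}$.

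The main difficulty, modest as it is, lies in correctly splitting into the three regimes for the number of isolated vertices in $G_i$ and in verifying the hypotheses of Lemma \ref{2.2} for $Q$ itself (i.e. that $Q$ is non-empty and isolated-vertex-free after the $K_1$ is peeled off); once these preliminaries are in place the final reduction to $K_{s,t}$ is a short forbidden-subgraph argument that does not require any of the $H_i$ computations from Table \ref{S}.
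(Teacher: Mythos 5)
Your proposal is correct, and its skeleton matches the paper's: peel an isolated vertex off a non-empty $G_i$, rule out a disconnected remainder $Q$ via an induced $2K_2$ (or fall into the $\overline{K_2}\cup K_2$ case of Theorem \ref{3.3}), and then show the connected remainder is complete bipartite. The one genuine divergence is in the last step. The paper simply asserts that a connected bipartite graph which is not complete bipartite contains an induced $P_4$ (a standard fact, left unproved there) and concludes via Lemma \ref{2.4}. You instead derive this from the paper's own toolkit: Lemma \ref{2.2} forces $\overline{Q}$ to be disconnected, so $Q$ is a join $Q_1\vee\cdots\vee Q_r$; an edge inside any $Q_j$ would create a triangle with a vertex of another factor, so every factor is edgeless and $Q$ is complete multipartite; bipartiteness then forces $r=2$. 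This buys a fully self-contained argument at the cost of a few extra lines, and it also cleanly isolates the hypothesis needed for Lemma \ref{2.2} (that $Q$ is isolated-vertex-free), which your up-front case split on the number of isolated vertices of $G_i$ (via Lemma \ref{3.2}) guarantees; the paper handles the same point implicitly by folding the two-isolated-vertex situation into the ``$Q$ disconnected'' branch. Both routes are sound and reach the same conclusion.
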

\begin{proof} Assume ${G_i}$ is non-empty. Then by Lemma \ref{3.1},  we may assume that ${G_i}=K_1\cup Q$, where $Q$ is a non-empty graph. If $Q$ is not connected,
then $Q$ must contain $2K_2$ as an induced subgraph, a contradiction
to Lemma \ref{2.4}, or  ${G_i}={\overline K_2}\cup K_2$ by Theorem
\ref{3.3}. If $Q$ is connected and not complete bipartite, then $Q$
must contain $P_4$ as an induced subgraph, again a contradiction to
Lemma \ref{2.4}. Therefore, $Q$ is complete bipartite.
\end{proof}
In the following proposition, we list some particular graphs with the second greatest eigenvalue no less than $1/2$.

\noindent\textbf{Proposition 2.} Let $Y_i$ be as listed in the following
table, in which $T_{s,t}=K_1\cup K_{s,t}$. Then for any
$i=1,2,\ldots,8$, $\lambda_2(Y_i)\geq 1/2$.
\begin{table}[h] \label{S}
\centering
\begin{tabular}{|l|c|l|c| }
\hline
$Y_i$       &$\lambda_2(Y_i)$    &  $Y_i$       &$\lambda_2(Y_i)$  \\
\hline
$Y_1=T_{1,3}\vee T_{1,2}\vee T_{1,2}$  & {0.5031}    &  $Y_5=T_{2,2}\vee T_{1,1}\vee K_2$ & {0.5049}\\
\hline
$Y_2=T_{1,3}\vee T_{1,2}\vee T_{1,1}\vee K_{1,1}$  & {0.5003}  &  $Y_6=T_{2,3}\vee T_{1,1}\vee K_1$ & {0.5152}\\
\hline
$Y_3=T_{1,4}\vee T_{1,2}$  & {0.5065}  &  $Y_7=T_{2,4}\vee T_{1,1}$ & {0.5061}\\
\hline
$Y_4=T_{2,2}\vee T_{1,2}$  & {0.5195}  &  $Y_{8}=T_{3,3}\vee T_{1,1}$ & {0.5130}\\
\hline
\end{tabular}
\caption{$Y_i,i=1,2,\ldots,8$.}
\end{table}
\begin{lemma}\label{b2}
Let ${G_1}=K_1\cup K_{s,t}$ and $t\geq s\geq 3$. If
$\lambda_2(G)<1/2$ then ${G_i}$ is empty, i.e., ${ G_i}={\overline
K_{s_i}}$, for every $i\in\{2,\ldots,k\}$.
\end{lemma}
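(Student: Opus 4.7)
The plan is to argue by contradiction. Suppose some $G_j$ with $j\in\{2,\ldots,k\}$ is non-empty; I will extract from $G$ an induced copy of the graph $Y_8=T_{3,3}\vee T_{1,1}$ listed in Proposition 2 and then invoke the hereditary property (Lemma \ref{2.1}) to conclude $\lambda_2(G)\geq \lambda_2(Y_8)\geq 1/2$, contradicting the hypothesis $\lambda_2(G)<1/2$.

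First I would show that every non-empty $G_j$ (for $j\geq 2$) contains $T_{1,1}=K_1\cup K_2$ as an induced subgraph. By Lemma \ref{b1}, such a $G_j$ is either $\overline{K_2}\cup K_2$ or $K_1\cup K_{s_j,t_j}$ with $t_j\geq s_j\geq 1$. In the first case, picking one of the two isolated vertices together with the edge produces $K_1\cup K_2$; in the second, the isolated vertex together with any edge of $K_{s_j,t_j}$ does the same.

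Next I would exploit the hypothesis $s,t\geq 3$ on $G_1$: restricting $G_1=K_1\cup K_{s,t}$ to its isolated vertex together with three vertices from each side of the complete bipartite part yields an induced copy of $T_{3,3}=K_1\cup K_{3,3}$. Because $G=G_1\vee G_2\vee\cdots\vee G_k$ and a join leaves the induced structure within each part intact while adding all cross-edges, the induced subgraph of $G$ on the chosen vertices of $G_1$ together with the chosen vertices of $G_j$ is precisely $T_{3,3}\vee T_{1,1}=Y_8$. Proposition 2 records $\lambda_2(Y_8)\approx 0.5130\geq 1/2$, and Lemma \ref{2.1} delivers the promised contradiction.

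The only nontrivial step is identifying the correct witness $Y_8$ from Table 2: the condition $s,t\geq 3$ is exactly what guarantees an induced $T_{3,3}$ inside $G_1$, while the weakest consequence of a non-empty $G_j$ via Lemma \ref{b1} is the presence of $T_{1,1}$. Once this pairing is spotted, the proof collapses to a single application of the hereditary property and requires no spectral calculation beyond what is already tabulated in Proposition 2.
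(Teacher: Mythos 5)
Your proof is correct and is essentially the paper's own argument: both extract an induced copy of $Y_8=(K_1\cup K_{3,3})\vee(K_1\cup K_{1,1})$ from a non-empty $G_j$ and apply the hereditary property via Lemma \ref{2.1} and Proposition 2. You merely spell out (via Lemma \ref{b1}) why a non-empty $G_j$ must contain an induced $K_1\cup K_2$, a detail the paper leaves implicit.
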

\begin{proof} If ${G_i}$ is not empty for some $i\geq 2$, then $G$ contains an induced subgraph $(K_1\cup K_{3,3})\vee (K_1\cup K_{1,1})=Y_{8}$. This is a contradiction.
\end{proof}
\begin{lemma}\label{b3}
Let ${G_1}=K_1\cup K_{2,t}$ and $t\geq 2$. If $\lambda_2(G)<1/2$, then  ${G_i}={\overline K_{s_i}}$ for every $i\in\{2,\ldots,k\}$, or one of the following holds:\\
(i).  $t=3,k=2$ and $G=(K_1\cup K_{2,3})\vee (K_1\cup K_{1,1})$;\\
(ii).  $t=2,k\leq 3$ and $G=(K_1\cup K_{2,2})\vee (K_1\cup
K_{1,1})\vee{\overline K_{s_3}},s_3\geq 0$.
\end{lemma}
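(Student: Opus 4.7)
The plan is to argue by contradiction using the forbidden induced subgraphs listed in Proposition 2, together with Theorem \ref{3.3} and Lemma \ref{b1}. Assume $\lambda_2(G) < 1/2$. If every $G_i$ with $i \geq 2$ is empty, the first conclusion holds, so I would suppose some $G_j$ ($j \geq 2$) is non-empty. By Lemma \ref{b1}, each such non-empty $G_j$ is either $\overline{K_2} \cup K_2$ or $K_1 \cup K_{a,b}$ with $b \geq a \geq 1$; in either case $G_j$ contains $T_{1,1} = K_1 \cup K_{1,1}$ as an induced subgraph. A first easy step: if $t \geq 4$, then $G_1 \supseteq T_{2,4}$, so $G$ contains $T_{2,4} \vee T_{1,1} = Y_7$, contradicting Proposition 2. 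Hence $t \in \{2, 3\}$.

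When $t = 3$, I would first show $k = 2$: if $k \geq 3$, then any part beyond $G_1$ and $G_j$ contributes at least one vertex, so $G \supseteq T_{2,3} \vee T_{1,1} \vee K_1 = Y_6$, a contradiction. With $k = 2$, the option $G_2 = \overline{K_2} \cup K_2$ is eliminated by Theorem \ref{3.3}, which would force $G_1$ to be empty---contradicting $G_1 = K_1 \cup K_{2,3}$. Thus $G_2 = K_1 \cup K_{a,b}$. If $b \geq 2$, then $G_2 \supseteq T_{1,2}$ and, since $G_1 \supseteq T_{2,2}$, we get $G \supseteq T_{2,2} \vee T_{1,2} = Y_4$, a contradiction. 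Therefore $a = b = 1$ and $G_2 = T_{1,1}$, yielding case (i).

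The case $t = 2$ is handled analogously but requires a bit more care. Using $Y_5 = T_{2,2} \vee T_{1,1} \vee K_2$ in place of $Y_6$, I would rule out $k \geq 4$ (any two further parts contribute $K_1 \vee K_1 = K_2$) and simultaneously rule out having two non-empty $G_i$'s among $i \geq 2$ (which would contain $T_{1,1} \vee T_{1,1}$, hence $T_{1,1} \vee K_2$, as an induced subgraph). The same $T_{1,2}$-avoidance argument via $Y_4$ forces the unique non-empty $G_i$ ($i \geq 2$) to equal $T_{1,1}$, giving $G = T_{2,2} \vee T_{1,1} \vee \overline{K_{s_3}}$ with $k \leq 3$ and $s_3 \geq 0$, which is case (ii). The main subtlety is correctly distinguishing when to invoke Theorem \ref{3.3} (for the $\overline{K_2} \cup K_2$ option, only relevant when $k = 2$) versus Lemma \ref{3.2}(i) (which eliminates this option directly when $k \geq 3$); beyond that, the whole argument is routine forbidden-subgraph bookkeeping and no eigenvalue calculation is needed---the computations $\lambda_2(Y_4), \ldots, \lambda_2(Y_7) \geq 1/2$ are already tabulated in Proposition 2.
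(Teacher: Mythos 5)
Your proposal is correct and follows essentially the same route as the paper's own proof: rule out $t\geq 4$ via $Y_7$, force each non-empty $G_i$ ($i\geq 2$) to be $T_{1,1}$ via $Y_4$, and then bound $k$ (and the number of non-empty parts) via $Y_6$ when $t=3$ and $Y_5$ when $t=2$. The only difference is that you explicitly dispose of the $\overline{K_2}\cup K_2$ alternative from Lemma \ref{b1} using Theorem \ref{3.3} and Lemma \ref{3.2}(i), a step the paper leaves implicit; this is a welcome clarification rather than a deviation.
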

\begin{proof} Assume that ${G_i}$ is not empty for some $i\geq 2$.

If $t\geq 4$, then $G$ contains an induced subgraph $(K_1\cup
K_{2,4})\vee (K_1\cup K_{1,1})=Y_7$. This is a contradiction. In the
following  we assume that $t\leq 3$. By Lemma \ref{b1},  ${
G_i}=K_1\cup K_{s_i,t_i}$. If $t_i\geq 2$, then $G$ contains an
induced subgraph $(K_1\cup K_{2,2})\vee(K_1\cup K_{1,2})=Y_4$, a
contradiction. This implies that $s_i=t_i=1$ by symmetry.

If $t=3$ and $k\geq 3$, then $G$ contains an induced subgraph
$(K_1\cup K_{2,3})\vee(K_1\cup K_{1,1})\vee K_1=Y_6$, again a
contradiction. Therefore, if $t=3$ then $k=2$ and, hence (i)
follows. If $t=2$ and ${G}_j={G}_l=K_1\cup K_{1,1}$ for some $j,l$
with $j,l\not=i$, then  $G$ contains an induced subgraph $(K_1\cup
K_{2,2})\vee(K_1\cup K_{1,1})\vee K_2=Y_5$, again a contradiction.
Further, notice that  $(K_1\cup K_{2,2})\vee(K_1\cup K_{1,1})\vee
K_2=(K_1\cup K_{2,2})\vee(K_1\cup K_{1,1})\vee K_1\vee K_1$, meaning
that $k\leq 3$. (ii) thereby follows, which completes our proof.
\end{proof}
\begin{lemma}\label{b4}
Let ${G_1}=K_1\cup K_{1,t},t\geq 3$. If $\lambda_2(G)<1/2$, then one
of the following holds:\
(i). ${G_i}=K_1\cup K_{1,1}$ or ${G_i}={\overline K_{s_i}}$ for every $i\in\{2,\ldots,k\}$;\\
(ii). $t=3,{G_2}=K_1\cup K_{1,2}$ and ${G_i}={\overline K_{s_i}}$  for any $i\in\{3,\ldots,k\}$;\\
(iii). $t=3$ and $G=(K_1\cup K_{1,3})\vee (K_1\cup K_{1,2})\vee
(K_1\cup K_{1,1})\vee{\overline K_{s_4}}$.
\end{lemma}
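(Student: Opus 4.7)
I would run a case analysis on the shape of each $G_i$ for $i\geq 2$, combining Lemma~\ref{b1} with the forbidden induced subgraphs of Proposition~2 (augmented by one direct characteristic-polynomial evaluation).

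\textbf{Preliminary reduction and the easy range $t\geq 4$.} By Lemma~\ref{b1}, every nonempty $G_i$ ($i\geq 2$) is either $\overline{K}_2\cup K_2$ or $K_1\cup K_{s_i,t_i}$ with $t_i\geq s_i\geq 1$. The option $G_i=\overline{K}_2\cup K_2$ is eliminated at once: for $k\geq 3$ it contradicts Lemma~\ref{3.2}(i) (two isolated vertices in $G_i$), and for $k=2$ Theorem~\ref{3.3} would force $G_1=\overline{K}_{n-4}$, contradicting $G_1=K_1\cup K_{1,t}$ with $t\geq 3$. Next, if some $s_i\geq 2$, then $G_i\supseteq T_{2,2}$ while $G_1\supseteq T_{1,2}$, so $G\supseteq Y_4=T_{2,2}\vee T_{1,2}$, a contradiction. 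Hence every nonempty $G_i$ equals $T_{1,t_i}=K_1\cup K_{1,t_i}$. In the range $t\geq 4$, if any $t_i\geq 2$ then $G\supseteq Y_3=T_{1,4}\vee T_{1,2}$; so $t_i\leq 1$ throughout and we are in case~(i).

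\textbf{The critical case $t=3$, excluding $t_i\geq 3$.} The symmetric $Y_3$-argument immediately bounds $t_i\leq 3$. The main obstacle is excluding $t_i=3$, because $T_{1,3}\vee T_{1,3}$ contains \emph{no} $Y_j$ from Proposition~2 as an induced subgraph, so Proposition~2 alone will not close the case. I would handle this by a direct computation: using the equitable partition with six cells (isolated vertex, centre, leaves of $G_1$, and similarly for $G_2$) together with the $G_1\leftrightarrow G_2$ swap symmetry to split the $6\times 6$ quotient matrix into symmetric and antisymmetric $3\times 3$ blocks, one obtains
\[
\chi(T_{1,3}\vee T_{1,3},\lambda)=\lambda^{4}\bigl(\lambda^{3}-5\lambda^{2}-9\lambda+3\bigr)\bigl(\lambda^{3}+5\lambda^{2}+3\lambda-3\bigr).
\]
Evaluating at $\lambda=1/2$ gives $\frac{1}{16}\cdot\bigl(-\frac{21}{8}\bigr)\cdot\bigl(-\frac{1}{8}\bigr)=\frac{21}{1024}>0$. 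Since $T_{1,3}\vee T_{1,3}$ is non-empty and connected, Lemma~\ref{iff} forces $\lambda_2\geq 1/2$, contradicting the hypothesis. Such a computation would naturally belong in an appendix alongside the existing ones.

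\textbf{Enumeration for $t=3$ with $t_i\leq 2$.} If every $G_i$ ($i\geq 2$) is empty or $T_{1,1}$, we are in case~(i). Otherwise relabel so that $G_2=T_{1,2}$. Then $Y_1=T_{1,3}\vee T_{1,2}\vee T_{1,2}$ forbids any further $G_j$ ($j\geq 3$) from being $T_{1,2}$, and $Y_2=T_{1,3}\vee T_{1,2}\vee T_{1,1}\vee K_{1,1}$ forbids two $G_j$'s with $j\geq 3$ from both being $T_{1,1}$: a second $T_{1,1}$ part supplies simultaneously the $T_{1,1}$ factor (its whole self) and the $K_{1,1}$ factor (its internal edge) needed in $Y_2$. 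Hence at most one $G_j$ ($j\geq 3$) equals $T_{1,1}$ and the rest are empty, producing case~(ii) (when none does) or case~(iii) (when exactly one does).
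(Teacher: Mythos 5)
Your proof is correct, and the computation at its heart checks out: the factorization $\chi(T_{1,3}\vee T_{1,3},\lambda)=\lambda^{4}(\lambda^{3}-5\lambda^{2}-9\lambda+3)(\lambda^{3}+5\lambda^{2}+3\lambda-3)$ is consistent with the trace, edge count ($31$) and triangle count ($30$) of that graph, so $\chi(T_{1,3}\vee T_{1,3},1/2)=21/1024>0$ and Lemma \ref{iff} indeed forces $\lambda_2(T_{1,3}\vee T_{1,3})\geq 1/2$. Your skeleton (reduce via Lemma \ref{b1} to parts of the form $K_1\cup K_{1,t_i}$, then apply $Y_3,Y_1,Y_2$) coincides with the paper's, but you diverge at exactly the step where Proposition 2 is insufficient, namely excluding a second part containing $K_1\cup K_{1,3}$. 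The paper handles that spectrally: it computes $\chi(\overline{G_1},\lambda)$ for $\overline{G_1}=K_1\vee(K_1\cup K_t)$, shows its least eigenvalue is below $-3/2$ whenever $t\geq 3$, observes that two such components push the second smallest eigenvalue of $\overline{G}$ below $-3/2$, and concludes $\lambda_2(G)\geq-\lambda_{n-1}(\overline{G})-1\geq 1/2$ from the Cao--Hong inequality. You instead promote $T_{1,3}\vee T_{1,3}$ to an additional forbidden subgraph certified by a direct evaluation at $1/2$. Both are valid: the paper's route kills all $t,t_i\geq 3$ in one stroke (no prior appeal to $Y_3$ to cap $t_i$ at $3$) and reuses machinery already deployed in Lemmas \ref{b6}--\ref{b8}, while yours stays entirely inside the forbidden-induced-subgraph framework of Propositions 1 and 2 at the cost of one more appendix-style polynomial. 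One phrasing nit in your last step: the two factors $T_{1,1}$ and $K_{1,1}$ of $Y_2$ must come from \emph{different} join-parts (one $T_{1,1}$ part supplies the whole $T_{1,1}$, the other supplies its internal edge as the $K_{1,1}$); as written, ``a second $T_{1,1}$ part supplies simultaneously'' both factors reads as if a single part furnished both, which would not yield the required join structure.
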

\begin{proof}  By a direct calculation we have
$$\chi(\overline G_1,\lambda)=\chi(K_1\vee(K_1\cup K_t),\lambda)=(\lambda+1)^{t-1}(\lambda^3+(1-t)\lambda^2-(t+1)\lambda+t-1).$$
Write $f(\lambda)=\lambda^3+(1-t)\lambda^2-(t+1)\lambda+t-1$. Since
$t\geq 3$, it is clear that $f(-3/2)>0$. Therefore, the smallest
root of $f(\lambda)$ is smaller than $-3/2$ as
$\lim_{\lambda\rightarrow-\infty}\chi(\overline
G_1,\lambda)=-\infty$. This implies that the smallest eigenvalue of
$\overline G_1$ is smaller than $-3/2$, i.e.,
$\lambda_{n_1}(\overline G_1)\leq-3/2$, where $|\overline G_1|=n_1$.
 Further, by Lemma \ref{3.2},  Lemma \ref{b1} and Lemma \ref{b3}, ${G_i}=K_1\cup K_{1,t_i}$ for any $i\geq 2$, where $t_i\geq 1$.
 With no loss of generality, assume $t_2\geq t_3\geq\cdots\geq t_k$. We show that $t_2< 3$.

Suppose to the contrary that $t_2\geq 3$. By the same discussion as
for $\overline G_1$, we also have $\lambda_{n_2}(\overline
G_2)\leq-3/2$, where $|\overline G_2|=n_2$. Since $\overline G_1$
and $\overline G_2$ are components of ${\overline G}$,
$\lambda_{n_1}(\overline G_1)$ and $\lambda_{n_2}(\overline G_2)$
are also the eigenvalues of ${\overline G}$. This means that the
second smallest eigenvalue of ${\overline G}$ is at most $-3/2$.
Further, for a graph $H$ of order $n$ $(n\geq 2)$ and a positive
integer $k$ $(k\geq 2)$, recall that
$\lambda_k(H)+\lambda_{n-k+1}({\overline H})\geq -1$ (see \cite{CH2} for details).
Therefore,
$$\lambda_2(G)\geq -\lambda_{n-2+1}({\overline G})-1= -\lambda_{n-1}({\overline G})-1\geq \frac{3}{2}-1=1/2.$$
This contradicts our assumption that $\lambda_2(G)<1/2$ and, hence
$t_2< 3$.

If $t_2=2$ and $t\geq 4$, then $G$ contains $Y_3=(K_1\cup
K_{1,4})\vee (K_1\cup K_{1,2})$ as an induced subgraph, a
contradiction. We now assume that $t=3$ and $t_2=2$.

If $t_3=2$, then  $G$ contains $Y_1=(K_1\cup K_{1,3})\vee (K_1\cup
K_{1,2})\vee (K_1\cup K_{1,2})$ as an induced subgraph, a
contradiction. Similarly, if $t_3=t_4=1$, then  $G$ contains
$Y_2=(K_1\cup K_{1,3})\vee (K_1\cup K_{1,2})\vee (K_1\cup
K_{1,1})\vee K_{1,1}$ as an induced subgraph, again a contradiction.
Notice that $K_{1,1}={\overline K_1}\vee{\overline K_1}$. This
completes our proof.
\end{proof}
\begin{lemma}\label{b5}
Let
$$\delta(\lambda,s,t,s_2,\ldots,s_k)=\left(1-\sum\limits_{i=2}^k\frac{s_i}{\lambda+s_i}\right)(\lambda^{3}+(s+t+1)\lambda^{2}+st\lambda-st)-(s+t+1)\lambda^{2}-2st\lambda+st.$$
If ${G_1}=K_1\cup K_{s,t}$ ($s,t\geq 2$) and ${ G_i}={\overline
K}_{s_i}$ for $i\in\{2,\ldots,k\}$, then $\lambda_2(G)<1/2$ if and
only if $\delta(1/2,s,t,s_2,\ldots,s_k)<0$.
\end{lemma}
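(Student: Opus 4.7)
The plan is to apply Lemma~\ref{iff} to reduce the claim to the sign of $\chi(G,1/2)$, then compute $\chi(G,\lambda)$ explicitly via an equitable partition and match the resulting polynomial to $\delta$.

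To apply Lemma~\ref{iff} in both directions I would first verify $\lambda_3(G)<1/2$. Deleting the isolated vertex of $G_1$ leaves the induced subgraph $H=\overline{K}_s\vee\overline{K}_t\vee\overline{K}_{s_2}\vee\cdots\vee\overline{K}_{s_k}$, which is complete multipartite; Lemma~\ref{2.3} gives $\lambda_2(H)=0$, and since $|V(H)|=|V(G)|-1$, Cauchy interlacing (Lemma~\ref{2.1}) yields $\lambda_3(G)\leq\lambda_2(H)=0<1/2$. Hence by Lemma~\ref{iff} it suffices to prove that $\chi(G,1/2)<0$ if and only if $\delta(1/2,s,t,s_2,\ldots,s_k)<0$.

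For the computation of $\chi(G,\lambda)$, I would use the equitable partition of $V(G)$ into $k+2$ classes: the isolated vertex of $G_1$, the two sides of $K_{s,t}$ (of sizes $s$ and $t$), and the independent sets $\overline{K}_{s_2},\ldots,\overline{K}_{s_k}$. Each class is a twin set, so the spectrum of $G$ is the spectrum of the $(k+2)\times(k+2)$ quotient matrix $B$ together with $(s-1)+(t-1)+\sum_{i=2}^{k}(s_i-1)=n-k-2$ extra $0$ eigenvalues. To evaluate $\det(\lambda I-B)$ I would apply a Schur-complement expansion along the bottom-right $(k-1)\times(k-1)$ block corresponding to $\overline{K}_{s_2},\ldots,\overline{K}_{s_k}$: this block has the form $\mathrm{diag}(\lambda+s_i)-\mathbf{1}\mathbf{s}^{T}$ with $\mathbf{s}=(s_2,\ldots,s_k)^{T}$, so by the matrix determinant lemma its determinant is exactly $\prod_{i=2}^{k}(\lambda+s_i)\cdot(1-\tau)$, where $\tau:=\sum_{i=2}^{k}\frac{s_i}{\lambda+s_i}$---precisely the first factor of $\delta$.

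The residual $3\times 3$ Schur complement picks up a rank-one correction of the shape $\frac{\tau}{1-\tau}\mathbf{1}_{3}(1,s,t)$ from the off-diagonal blocks, and a direct expansion of its determinant, multiplied back by $(1-\tau)$, telescopes to exactly $\delta(\lambda,s,t,s_2,\ldots,s_k)$. Assembling the pieces yields $\chi(G,\lambda)=\lambda^{n-k-2}\prod_{i=2}^{k}(\lambda+s_i)\cdot\delta(\lambda,s,t,s_2,\ldots,s_k)$. At $\lambda=1/2$ the prefactor $(1/2)^{n-k-2}\prod_{i=2}^{k}(1/2+s_i)$ is strictly positive since every $s_i\geq 1$, so $\chi(G,1/2)<0$ iff $\delta(1/2,s,t,s_2,\ldots,s_k)<0$, and Lemma~\ref{iff} together with the previously established $\lambda_3(G)<1/2$ finishes the proof. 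The main obstacle is the $3\times 3$ Schur-complement expansion: tracking the rank-one correction requires the simplifications $(\lambda-t\alpha)+t(1+\alpha)=\lambda+t$ and its symmetric counterpart in $s$ (where $\alpha=\tau/(1-\tau)$), and only after these cancellations does the clean identity $(1-\tau)\cdot[\text{$3\times 3$ Schur complement determinant}]=\delta$ become transparent.
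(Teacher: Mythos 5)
Your proposal is correct and follows the same logical skeleton as the paper's proof: both directions are reduced via Lemma \ref{iff} to the sign of $\chi(G,1/2)$, the factorization $\chi(G,\lambda)=\lambda^{n-k-2}\prod_{i=2}^{k}(\lambda+s_i)\,\delta(\lambda,s,t,s_2,\ldots,s_k)$ is established (your exponent $n-k-2$ equals the paper's $s+t+s_2+\cdots+s_k-k-1$), and the bound $\lambda_3(G)<1/2$ is obtained by interlacing against exactly the same one-vertex-deleted complete multipartite subgraph $H=K_{s,t}\vee\overline{K}_{s_2}\vee\cdots\vee\overline{K}_{s_k}$ together with Lemma \ref{2.3}. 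The only genuine difference is how the characteristic polynomial is evaluated: the paper (Appendix 6) performs explicit row and column operations on the full $n\times n$ determinant, then borders and expands a residual $(k+3)\times(k+3)$ determinant, whereas you pass to the $(k+2)\times(k+2)$ quotient matrix of the false-twin equitable partition and peel off $\prod_{i=2}^{k}(\lambda+s_i)(1-\tau)$ by the matrix determinant lemma before taking a Schur complement. Your route is cleaner and makes the appearance of the factor $1-\sum_{i=2}^{k}\frac{s_i}{\lambda+s_i}$ conceptually transparent, at the cost of still requiring the final (asserted but not fully displayed) $3\times3$ expansion to match $\delta$; the paper's route is more mechanical but fully written out. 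You also make explicit the positivity of the prefactor at $\lambda=1/2$, which the paper leaves implicit.
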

\begin{proof} By a direct calculation (see Appendix 6), we have
$$\chi(G,\lambda)=\lambda^{s+t+s_2+\cdots+s_k-k-1}\delta(\lambda,s,t,s_2,\ldots,s_k)\prod\limits_{i=2}^k(\lambda+s_i).$$
By Lemma \ref{iff}, if $\lambda_2(G)<1/2$, then $\chi(G,1/2)<0$ and,
hence, $\delta(1/2,s,t,s_2,\ldots,s_k)<0$.

Conversely, assume $\delta(1/2,s,t,s_2,\ldots,s_k)<0$.  Let
$H=K_{s,t}\vee {\overline K}_{s_2}\vee\cdots\vee{\overline
K}_{s_k}$. Then $H$ is an induced complete multipartite subgraph of
$G$ and has one vertex less than $G$. So by Lemma \ref{2.1} and
\ref{2.3}, $\lambda_3(G)\leq\lambda_2(H)=0<1/2$. The lemma follows
by Lemma \ref{iff}.
\end{proof}
\begin{lemma}\label{b6}
If $G=(K_1\cup K_{2,3})\vee (K_1\cup K_{1,1})$ or  $G=(K_1\cup
K_{2,2})\vee (K_1\cup K_{1,1})\vee{\overline K_{s_3}},s_3\geq 0$,
then $\lambda_2(G)<1/2$.
\end{lemma}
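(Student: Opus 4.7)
The plan is to apply Lemma~\ref{iff} in each case: since both graphs are non-empty connected, the conclusion $\lambda_2(G)<1/2$ reduces to the two conditions $\chi(G,1/2)<0$ and $\lambda_3(G)<1/2$. To secure $\lambda_3(G)<1/2$, I will exhibit in each case a one-vertex induced subgraph $H$ with $\lambda_2(H)<1/2$; Cauchy interlacing (Lemma~\ref{2.1}) then gives $\lambda_3(G)\le\lambda_2(H)<1/2$.

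For the first graph $G=(K_1\cup K_{2,3})\vee(K_1\cup K_{1,1})$, delete the isolated vertex of the factor $K_1\cup K_{1,1}$ to obtain $H=(K_1\cup K_{2,3})\vee K_{1,1}$. Rewriting $K_{1,1}=\overline{K}_1\vee\overline{K}_1$ puts $H$ in the setting of Lemma~\ref{b5} with $s=2,\,t=3,\,k=3,\,s_2=s_3=1$, and a short arithmetic check confirms $\delta(1/2,2,3,1,1)<0$, hence $\lambda_2(H)<1/2$. For the second graph $G=(K_1\cup K_{2,2})\vee(K_1\cup K_{1,1})\vee\overline{K}_{s_3}$, the analogous deletion produces $H=(K_1\cup K_{2,2})\vee\overline{K}_1\vee\overline{K}_1\vee\overline{K}_{s_3}$, which falls under Lemma~\ref{b5} with $s=t=2,\,k=4$; here the required inequality $\delta(1/2,2,2,1,1,s_3)<0$ must hold uniformly in $s_3\ge 0$. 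Only the term $s_3/(1/2+s_3)$ depends on $s_3$, and it is monotone increasing with supremum~$1$, so the sign of $\delta(1/2,2,2,1,1,s_3)$ can be controlled by evaluating at $s_3=0$ and in the limit $s_3\to\infty$.

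The remaining task is to verify $\chi(G,1/2)<0$ in each case. I would compute $\chi(G,\lambda)$ by the equitable partition induced by the natural vertex orbits --- five cells in the first case (the two sides of $K_{2,3}$, each isolated vertex, and the edge $K_{1,1}$) and a sixth cell of size $s_3$ in the second case --- combining the quotient-matrix spectrum with the intra-cell eigenvalues ($-1$ from the $K_2$ cell and $0$'s from the empty cells). Evaluating at $\lambda=1/2$ reduces case~1 to a single numerical check and case~2 to a simple expression in $s_3$ whose negativity for all $s_3\ge 0$ is immediate. The main obstacle is just the bookkeeping of these determinant calculations; no delicate argument is required beyond that.
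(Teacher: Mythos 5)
Your strategy is sound, and where you diverge from the paper your route is valid. The paper does not use Lemma~\ref{iff} at all for the first graph: it simply computes $\lambda_2\bigl((K_1\cup K_{2,3})\vee(K_1\cup K_{1,1})\bigr)\approx 0.4974$ numerically from the $9\times 9$ adjacency matrix (Appendix~7). For the second family it deletes the isolated vertex of the $K_1\cup K_{2,2}$ factor, obtaining $H=K_{2,2}\vee(K_1\cup K_{1,1})\vee\overline{K}_{s_3}$ with $\overline H=2K_2\cup K_{1,2}\cup K_{s_3}$, and bounds $\lambda_2(H)\leq-\lambda_{n-1}(\overline H)-1=\sqrt2-1<1/2$ via the Cao--Hong complement inequality, then invokes Lemma~\ref{iff}. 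You instead delete the isolated vertex of the $K_1\cup K_{1,1}$ factor in both cases and feed the resulting graph into Lemma~\ref{b5}; this is legitimate (Lemma~\ref{b5} precedes this statement and does not depend on it), and the arithmetic checks out: $\delta(1/2,2,3,1,1)=-25/24<0$, and $\delta(1/2,2,2,1,1,s_3)=\tfrac{5}{24}+\tfrac58\cdot\tfrac{2s_3}{2s_3+1}-\tfrac54$, which increases in $s_3$ to the limit $-5/12<0$, exactly as your monotonicity remark anticipates. Your version is the more self-contained one (it reuses the paper's own $\delta$-machinery rather than importing the complement eigenvalue inequality), at the cost of a little extra arithmetic. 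The piece you leave unexecuted, namely $\chi(G,1/2)<0$, is precisely the determinant evaluation the paper relegates to Appendix~7 (and, for the second family, never actually evaluates at $1/2$ either); carrying it out via your equitable partition gives a quotient determinant at $\lambda=1/2$ whose coefficient of $s_3$ vanishes, so the value is $-45/64$ independently of $s_3$, and a single check at $s_3=0$ settles the whole family. So there is no gap in the ideas, only deferred but correct bookkeeping.
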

\begin{proof} If $G=(K_1\cup K_{2,3})\vee (K_1\cup K_{1,1})$, then by a direct calculation (see Appendix 7), we have
$\lambda_2(G)\approx 0.4974026<0.5.$

Now consider $G=(K_1\cup K_{2,2})\vee (K_1\cup
K_{1,1})\vee{\overline K_{s_3}},s_3\geq 0$. Let  $H=K_{2,2}\vee
(K_1\cup K_{1,1})\vee{\overline K}_{s_3}$. It is clear that
${\overline H}={\overline K}_{2,2}\cup (\overline {K_1\cup
K_{1,1}})\cup K_{s_3}=2K_2\cup K_{1,2}\cup K_{s_3}$. Further, the
smallest eigenvalue of ${\overline H}$ equals the minimum value of
the smallest eigenvalues among the components of  ${\overline H}$,
i.e., $\lambda_{n-1}({\overline
H})=\min\{\lambda_{2}(K_2),\lambda_{3}(K_{1,2}),\lambda_{s_3}(K_{s_3})\}=-\sqrt{2}$.
Therefore, $\lambda_2(H)\leq -\lambda_{n-1}({\overline
H})-1=\sqrt{2}-1<1/2$ (see \cite{CH2} for details). So by Lemma
\ref{2.1} and \ref{2.3}, $\lambda_3(G)\leq\lambda_2(H)<1/2$. The
lemma follows by Lemma \ref{iff}.
\end{proof}
\begin{lemma}\label{b7} \ \\
(i). Let $G=(K_1\cup K_{1,t})\vee(p\circ(K_1\cup
K_{1,1}))\vee{\overline K_{s_{p+2}}}\vee\cdots\vee{\overline
K_{s_{k}}},$ $t\geq 3,$ $p\geq 0.$
Then $\lambda_2(G)<1/2$ if and only if ${\gamma(p,t)}=4tp-10p-4t+1+(2t-5)\sum_{i=p+2}^k\frac{2s_i}{2s_i+1}<0;$\\
(ii). Let $G=(K_1\cup K_{1,3})\vee (K_1\cup K_{1,2})\vee{\overline K_{s_3}}\vee\cdots\vee{\overline K_{s_{k}}}$. Then  $\lambda_2(G)<1/2$ if and only if $\sum_{i=3}^k\frac{2s_i}{2s_i+1}< 3;$\\
(iii). If $G=(K_1\cup K_{1,3})\vee (K_1\cup K_{1,2})\vee (K_1\cup
K_{1,1})\vee{\overline K_{s_4}}$, then   $\lambda_2(G)<1/2$.
\end{lemma}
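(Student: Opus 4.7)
We apply Lemma \ref{iff} throughout: compute $\chi(G,\lambda)$ via the equitable partition inherited from the join/union structure, reduce $\chi(G,1/2)$ to the stated inequality, and separately establish $\lambda_3(G)<1/2$ by passing to an induced subgraph of a form already handled.

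For part (i), take the equitable partition whose cells are the isolated $K_1$ of $K_1\cup K_{1,t}$, the center of $K_{1,t}$, its leaf-set, and, for each of the $p$ copies of $K_1\cup K_{1,1}$, the isolated $K_1$ together with the $K_2$-edge, plus each $\overline{K}_{s_i}$. Eigenvectors orthogonal to the characteristic vectors of these cells give eigenvalue $0$ from every independent cell and $-1$ from every $K_2$-cell, so
$$\chi(G,\lambda)=\lambda^{M}(\lambda+1)^{p}\,\chi_B(\lambda),$$
where $M=(t-1)+\sum_{i=p+2}^{k}(s_i-1)$ and $\chi_B$ is the characteristic polynomial of the $(k+p+2)\times(k+p+2)$ quotient matrix. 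Following the factorization used in Lemma \ref{b5}, pulling each $\overline{K}_{s_i}$ out contributes a factor $(\lambda+s_i)$ and leaves $1-\sum_{i}s_i/(\lambda+s_i)$ in the main expression; evaluating at $\lambda=1/2$ sends each $s_i/(1/2+s_i)$ to $2s_i/(2s_i+1)$, so $\chi(G,1/2)$ becomes an explicit positive multiple of $\gamma(p,t)$, giving the only-if direction by Lemma \ref{iff}. For the if direction: when $p=0$, deleting the isolated $K_1$ of $K_1\cup K_{1,t}$ leaves the complete multipartite graph $\overline{K}_1\vee\overline{K}_t\vee\overline{K}_{s_2}\vee\cdots\vee\overline{K}_{s_k}$, whose $\lambda_2=0$ by Lemma \ref{2.3}, so Lemma \ref{2.1} yields $\lambda_3(G)\le 0<1/2$; for $p\ge1$ we induct on $p$, deleting the isolated $K_1$ of one $K_1\cup K_{1,1}$ and rewriting $K_{1,1}=\overline{K}_1\vee\overline{K}_1$ to obtain $H$ of form (i) with parameters $(p-1,t)$ plus two additional $\overline{K}_1$-parts. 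A direct arithmetic check gives $\gamma_H(p-1,t)=\gamma_G(p,t)+(10-4t)/3\le\gamma_G(p,t)$ for $t\ge3$, so the inductive hypothesis yields $\lambda_2(H)<1/2$ and hence $\lambda_3(G)<1/2$.

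Part (ii) is analogous (with no $K_2$-cells); the same factorization reduces $\chi(G,1/2)$ to a positive multiple of $3-\sum_{i=3}^{k}2s_i/(2s_i+1)$. For its if direction, remove the isolated $K_1$ of $K_1\cup K_{1,2}$ and absorb $K_{1,2}=\overline{K}_1\vee\overline{K}_2$ to reach a form-(i) subgraph with $(p,t)=(0,3)$; the hypothesis $\sum 2s_i/(2s_i+1)<3$ easily implies the corresponding $\gamma$-inequality, so part (i) plus Lemma \ref{2.1} gives $\lambda_3(G)<1/2$. Part (iii) is a direct application: the equitable-partition computation gives $\chi(G,1/2)<0$, and deleting the isolated $K_1$ of $K_1\cup K_{1,2}$ leaves, after absorbing $K_{1,2}$, a form-(i) subgraph with $(p,t)=(1,3)$ and $s$-parameters $\{1,2,s_4\}$; here $\gamma<0$ is immediate since the $s_4$-term is bounded by $1$. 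Lemma \ref{iff} then finishes.

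The main obstacle is the explicit reduction of $\chi_B(1/2)$ to $\gamma(p,t)$: the quotient matrix $B$ grows in size with both $p$ and $k$, so the computation must be organized carefully. The cleanest path is to perform the $\overline{K}_{s_i}$-factorization first (as in Lemma \ref{b5}) and then expand the remaining fixed-shape determinant by cofactors in the $K_1\cup K_{1,1}$-cells. A secondary obstacle is arranging the induction in part (i) so that $\gamma_H\le\gamma_G$ during the recursion; the identity above shows this works precisely because $t\ge3$, which is why the statement of (i) requires that bound.
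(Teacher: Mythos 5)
Your proposal is correct, and for the necessity direction it is essentially the paper's argument in different clothing: the paper obtains $\chi(G,\lambda)=\lambda^{\eta+t-1}(\lambda+1)^pQ(\lambda)$ by explicit row/column operations (Appendices 8--10) and shows $Q(1/2)$ is a positive multiple of $\gamma(p,t)$ (resp. of $\sum 2s_i/(2s_i+1)-3$); your equitable-partition/quotient-matrix formulation, with the $\overline{K}_{s_i}$-factorization done first as in Lemma \ref{b5}, yields the same factor structure and the same substitution $s_i/(1/2+s_i)\mapsto 2s_i/(2s_i+1)$. Where you genuinely diverge is the sufficiency step, i.e.\ establishing $\lambda_3(G)<1/2$ so that Lemma \ref{iff} applies. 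The paper deletes the isolated vertex of $K_1\cup K_{1,t}$ (or replaces $K_1\cup K_{1,3}$ by $\overline{K}_4$) and bounds $\lambda_2$ of the resulting one-vertex-smaller graph $H$ via the Cao--Hong complement inequality $\lambda_2(H)\leq-1-\lambda_{\min}(\overline{H})$, using that every component of $\overline{H}$ ($K_1\cup K_t$, $P_3$, $K_1\vee(K_1\cup K_2)$, cliques) has least eigenvalue greater than $-3/2$. You instead delete the isolated vertex of a $K_1\cup K_{1,1}$ (resp.\ of the $K_1\cup K_{1,2}$) and close the argument internally: induction on $p$ for (i), with base case handled by Lemma \ref{2.3} applied to a complete multipartite subgraph, and reduction of (ii) and (iii) to instances of (i). Your identity $\gamma_H(p-1,t)=\gamma_G(p,t)+(10-4t)/3$ is correct (each new $\overline{K}_1$ part contributes $2/3$ to the sum, so the difference is $-4t+10+\tfrac{4}{3}(2t-5)=(10-4t)/3\le -2/3$ for $t\ge3$), so the recursion does close, and the $\gamma$-values of the form-(i) graphs you reach in (ii) and (iii) are comfortably negative. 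The trade-off: the paper's route is uniform in $p$ and needs no arithmetic on $\gamma$, but imports the eigenvalue inequality from \cite{CH2}; yours is self-contained modulo interlacing and Lemma \ref{2.3}, at the cost of the monotonicity check. The only items you leave unexecuted are the determinant evaluations at $\lambda=1/2$ (which you flag, and which occupy the paper's appendices) and, in (iii), the explicit verification that $\chi(G,1/2)<0$; neither is a conceptual gap.
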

\begin{proof} (i). By a direct calculation (see Appendix 8), we have
$$\chi(G,\lambda)=\lambda^{\eta +t-1}(\lambda+1)^{p}Q(\lambda),$$
where $\eta=\sum\limits_{i=p+2}^k s_i-k+p+1 $ and

$Q(\lambda)=$
$$\left| {\begin{array}{*{6}{c}}
                1-\sum\limits_{i=p+2}^k\frac{s_i}{\lambda+s_i} &1&1&t&1&2\\
                1&\lambda+1 &1&t&0&0\\
                1&1&\lambda+1 &0&0&0\\
                1&1&0&\lambda+t &0&0\\
                p&0&0&0&\lambda+1&2\\
                p&0&0&0&1&\lambda+1
        \end{array}} \right|
\left| {\begin{array}{*{2}{c}}
                \lambda+1&2\\
                1&\lambda+1
        \end{array}} \right|^{p-1}\prod_{j=p+2}^k(\lambda+s_j).$$

It is clear that $\chi(G,1/2)<0$ if and only if $Q(1/2)<0$. Further,
$$Q(1/2)=\frac{1}{32}\left(4tp-10p-4t+1+(2t-5)\sum_{i=p+2}^k\frac{2s_i}{2s_i+1}\right)\left(\frac{1}{4}\right)^{p-1}\prod_{j=p+2}^{k}(0.5+s_j).$$
So by Lemma \ref{iff}, if $\lambda_2(G)<1/2$ then $Q(1/2)<0$,
meaning that ${\gamma(p,t)}<0$.

Conversely, let $H=K_{1,t}\vee(p\circ(K_1\cup K_{1,1}))\vee{\overline K_{s_{p+2}}}\vee\cdots\vee{\overline K_{s_{k}}}$. One can see that $\lambda_{n-1}({\overline H})=-\sqrt{2}$ and the remaining argument is completely the same as the proof of Lemma \ref{b6}.\\
(ii). By a direct calculation (see Appendix 9), we have
$$\chi(G,\lambda)=\lambda^{\xi+3}\prod_{i=3}^k(\lambda+s_i)R(\lambda),$$
where $\xi=\sum\limits_{i=3}^ks_i-k+2$ and

$$R(\lambda)=\left| {\begin{array}{*{7}{c}}
                1-\sum\limits_{i=3}^k\frac{s_i}{\lambda+s_i} &1&1&3&1&1&2\\
                1&\lambda+1 &1&3&0&0&0\\
                1&1&\lambda+1 &0&0&0&0\\
                1&1&0&\lambda+3 &0&0&0\\
                1&0&0&0&\lambda+1&1&2\\
                1&0&0&0&1&\lambda+1&0\\
                1&0&0&0&1&0&\lambda+2
        \end{array}} \right|.$$
Further, $R(1/2)=\frac{1}{64}(\sum_{i=3}^k\frac{2s_i}{2s_i+1}- 3)$.
Again by Lemma \ref{iff}, if $\lambda_2(G)<1/2$ then $R(1/2)<0$,
meaning that $\sum_{i=3}^k\frac{2s_i}{2s_i+1}< 3$.

Conversely, let $H={\overline K_4}\vee (K_1\cup K_{1,2})\vee{\overline K_{s_3}}\vee\cdots\vee{\overline K_{s_{k}}}$. Since ${\overline K_4}$ is an induced subgraph of $K_1\cup K_{1,3}$, $H$ is an induced subgraph of $G$ and   has one vertex less than $G$. Further, ${\overline H}=K_4\cup (K_1\vee(K_1\cup K_2))\cup K_{s_3}\cup\cdots\cup K_{s_{k}}$ and, hence, $\lambda_{n-1}({\overline H})=\lambda_4(K_1\vee(K_1\cup K_2))>-3/2$. The remaining argument is completely the same as the proof of Lemma \ref{b7}(i). This proves the result.\\
(iii). Let $H={\overline K_{4}}\vee (K_1\cup K_{1,2})\vee (K_1\cup
K_{1,1})\vee{\overline K_{s_4}}$. Then $\lambda_{n-1}({\overline
H})=\lambda_4(K_1\vee(K_1\cup K_2))\approx-1.4812>-3/2$ and the
remaining argument is completely the same as the proof for the
sufficiency of (ii).
\end{proof}

\begin{lemma}\label{b8} \ \\
Let $G=(p\circ(K_1\cup K_{1,2}))\vee(q\circ(K_1\cup
K_{1,1}))\vee{\overline K}_{s_{p+q+1}}\vee\cdots\vee{\overline
K}_{s_k}$, $p+q\geq 1.$ Then $\lambda_2(G)<1/2.$
\end{lemma}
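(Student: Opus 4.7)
\noindent The plan is to apply Lemma \ref{iff}: it suffices to show (a) $\chi(G,1/2)<0$ and (b) $\lambda_3(G)<1/2$, whence $\lambda_2(G)<1/2$.

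For (b), I would mimic the strategy of Lemmas \ref{b6} and \ref{b7}: construct an induced subgraph $H$ of $G$ with $|V(H)|=|V(G)|-1$ and $\lambda_2(H)<1/2$; Cauchy interlacing (Lemma \ref{2.1}) then gives $\lambda_3(G)\le\lambda_2(H)<1/2$. Suppose first $p\ge 1$ and let $H$ be obtained by deleting the isolated vertex of one copy of $K_1\cup K_{1,2}$, so that this copy becomes $K_{1,2}$. Then $\overline H$ is the disjoint union of $\overline{K_{1,2}}=K_1\cup K_2$, $(p-1)$ copies of the paw $K_1\vee(K_1\cup K_2)=\overline{K_1\cup K_{1,2}}$, $q$ copies of $P_3=\overline{K_1\cup K_{1,1}}$, and the cliques $K_{s_{p+q+1}},\ldots,K_{s_k}$. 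The smallest eigenvalue of the paw is the least root of $f(\lambda)=\lambda^3-\lambda^2-3\lambda+1$; since $f(-3/2)=-1/8<0$ and $f(-\sqrt 2)=\sqrt 2-1>0$, this root lies in $(-3/2,-\sqrt 2)$. All remaining components of $\overline H$ have smallest eigenvalue $\geq -\sqrt 2$, so $\lambda_{\min}(\overline H)>-3/2$. The bound $\lambda_2(H)\le-\lambda_{\min}(\overline H)-1$ (cf.\ \cite{CH2} and the proof of Lemma \ref{b6}) then gives $\lambda_2(H)<1/2$. The case $p=0$, $q\ge 1$ is analogous: delete the isolated vertex of one copy of $K_1\cup K_{1,1}$, so that $\overline H$ consists of $\overline{K_2}$, $(q-1)$ copies of $P_3$, and cliques, whence $\lambda_{\min}(\overline H)\ge -\sqrt 2>-3/2$.

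For (a), I would compute $\chi(G,\lambda)$ via the equitable partition induced by the join. Merging equivalent vertices across isomorphic factors produces three blocks for the $p$ copies of $K_1\cup K_{1,2}$ (isolated vertex, center, leaves), two blocks for the $q$ copies of $K_1\cup K_{1,1}$ (isolated vertex, $K_2$-vertex), and one block per $\overline{K}_{s_l}$, yielding a quotient matrix $B$ of order $5+(k-p-q)$. Standard equitable-partition theory then gives $\chi(G,\lambda)=\chi_B(\lambda)\,\Pi(\lambda)$, where $\Pi(\lambda)$---the product of within-block zero-sum contributions---is a product of $\lambda$-powers with factors of the type $(\lambda+1)$ and $(\lambda+s_l)$, manifestly positive at $\lambda=1/2$. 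It therefore suffices to show $\chi_B(1/2)<0$, by a direct determinantal expansion in the style already carried out for Lemmas \ref{b5} and \ref{b7}; the bookkeeping is tedious but elementary, and I would defer it to the appendix.

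The main obstacle is this determinantal calculation in (a): the matrix $B$ carries the $2+(k-p-q)$ free integer parameters $p,q,s_{p+q+1},\ldots,s_k$ simultaneously, and $\chi_B(1/2)$ must be shown to be negative uniformly. A useful device, already exploited in Lemmas \ref{b5} and \ref{b7}, is to collapse the $(k-p-q)$ ``independent-set'' rows into a single row with entry $1-\sum_{l=p+q+1}^k\frac{s_l}{\lambda+s_l}$; the resulting determinant has bounded order, and substituting $\lambda=1/2$ yields a closed-form expression whose sign can be verified separately in the two subcases $p\ge 1$ and $p=0$, $q\ge 1$, completing the proof.
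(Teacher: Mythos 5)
Your proposal has a genuine gap: part (a), the claim that $\chi(G,1/2)<0$, is never actually proved. You describe a quotient-matrix setup and a row-collapsing device and then state that "the bookkeeping is tedious but elementary" and that the sign "can be verified separately in the two subcases," but this verification is the entire substance of that half of the argument; with the free parameters $p$, $q$ and $s_{p+q+1},\ldots,s_k$ all present, the sign of the resulting closed-form expression at $\lambda=1/2$ is not self-evident and cannot simply be asserted. As written, the proof is incomplete.

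The irony is that the ingredient you use correctly in part (b) already finishes the whole lemma without any characteristic polynomial, and this is exactly what the paper does. The components of $\overline G$ itself are $p$ copies of the paw $K_1\vee(K_1\cup K_2)$, $q$ copies of $P_3$, and the cliques $K_{s_l}$; their smallest eigenvalues are approximately $-1.4812$, exactly $-\sqrt2$, and $-1$ respectively, so $\lambda_n(\overline G)>-3/2$. The Cao--Hong inequality $\lambda_2(G)+\lambda_{n}(\overline G)\leq -1$ (the form cited in \cite{CH2} and used in Lemma \ref{b6}) then gives $\lambda_2(G)\leq -\lambda_n(\overline G)-1<3/2-1=1/2$ directly --- no appeal to Lemma \ref{iff}, no induced subgraph $H$, and no determinant. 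You should either carry out the deferred computation in full (checking negativity uniformly in $p$, $q$ and the $s_l$) or, far more simply, apply to $G$ itself the complement bound you already applied to $H$.
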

\begin{proof}
Without loss of generality, we may assume that ${ G_1}=K_1\cup
K_{1,2},$ or ${G_2}=K_1\cup K_{1,1}.$ Since ${\overline G_1}$ and
${\overline G_2}$ are components of ${\overline G}$,
$\lambda_{4}({\overline G_1})$ and $\lambda_{3}({\overline G_2})$
are also the eigenvalues of ${\overline G}$. Moreover,
$\lambda_{4}({\overline G_1})\approx -1.4812$ and
$\lambda_{3}({\overline G_2})=-\sqrt{2}$ by routine calculation.
This means that the smallest eigenvalue of ${\overline G}$ is
$\lambda_{4}({\overline G_1})$ or $\lambda_{3}({\overline G_2})$.
Further, for a graph $H$ of order $n$ $(n\geq 2)$ and a positive
integer $k$ $(k\geq 2)$, recall that
$\lambda_k(H)+\lambda_{n-k+2}({\overline H})\leq -1$ \cite{CH2}.
Therefore,
$$\lambda_2(G)\leq -\lambda_{n-2+2}({\overline G})-1= -\lambda_{n}({\overline G})-1< 1/2.$$
\end{proof}
\begin{theorem}\label{main}Let $G$ be a connected graph of order $n$. Then $\lambda(G)<1/2$ if and only if $G$ is one of the following graphs:\\
(1). $({\overline K_2}\cup K_2)\vee{\overline K}_s$, $s \ge 1$;\\
(2). $({K_1} \cup ({\overline K_s}\vee {\overline P_3} ))\vee {K_1}$, $s \ge 1$;\\
(3). $({K_1} \cup ({\overline K_s}  \vee {K_3})) \vee{K_1}$, $s \ge 1$;\\
(4). $({K_1} \cup ({\overline K_s}  \vee {\overline K_2}\vee {K_2})) \vee {K_1}$, $2 \le s \le 3$;\\
(5). $({K_1} \cup ({\overline K_{{s_1}}}  \vee {\overline K_{{s_2}}}
\vee {\overline K_{{s_3}}} )) \vee {\overline K_t}$, ${s_1}\geq
{s_2}\geq {s_3}\geq 1,$ ${s_1} > 1,$ $t
<\frac{\alpha(s_1,s_2,s_3)}{\beta(s_1,s_2,s_3)}$;\\
(6).  $({K_1} \cup {K_3}) \vee {\overline K_t}$, $t \ge 1$;\\
(7). $(p\circ(K_1\cup K_{1,2}))\vee(q\circ(K_1\cup K_{1,1}))\vee{\overline K}_{s_{p+q+1}}\vee\cdots\vee{\overline K}_{s_k}$, $p$, $q\geq 0$;\\
(8). $(K_1\cup K_{1,t})\vee(p\circ(K_1\cup K_{1,1}))\vee{\overline K_{s_{p+2}}}\vee\cdots\vee{\overline K_{s_{k}}}$, $t\geq 3$, $p\geq 0$, ${\gamma(p,t)}<0$;\\
(9). $(K_1\cup K_{1,3})\vee(K_1\cup K_{1,2})\vee{\overline K_{s_3}}\vee\cdots\vee{\overline K_{s_{k}}}$, $\sum_{i=3}^k\frac{2s_i}{2s_i+1}< 3$;\\
(10). $(K_1\cup K_{1,3})\vee (K_1\cup K_{1,2})\vee (K_1\cup K_{1,1})\vee{\overline K}_s$;\\
(11). $(K_1\cup K_{2,2})\vee (K_1\cup K_{1,1})\vee{\overline K_s}$;\\
(12). $(K_1\cup K_{2,3})\vee (K_1\cup K_{1,1})$;\\
(13). $(K_1\cup K_{s,t})\vee{\overline
K_{s_2}}\vee\cdots\vee{\overline K_{s_k}}$, $s,t\geq2$,
$\delta(1/2,s,t,s_2,\ldots,s_k)<0$.
\end{theorem}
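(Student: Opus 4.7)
The statement is a consolidation of the classifications established across Theorem \ref{3.3}, Theorem \ref{3.10} and Lemmas \ref{b1}--\ref{b8}, so my plan is to organise these results into one exhaustive case analysis. The starting point is Lemma \ref{3.1}: writing $G = G_1 \vee G_2 \vee \cdots \vee G_k$ with $k \geq 2$ and every non-empty $G_i$ containing an isolated vertex. Lemma \ref{3.2} then restricts the number of isolated vertices in each $G_i$ to exactly one when $k \geq 3$, and at most two when $k = 2$.

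I would split the analysis into three disjoint top-level cases according to the structure of the summands. Case A (some $G_i$ has two isolated vertices): by Lemma \ref{3.2}(ii) we must have $k = 2$, and Theorem \ref{3.3} pins $G$ down to class (1). Case B (every non-empty $G_i$ has exactly one isolated vertex and at least one $G_i$ is non-bipartite): Theorem \ref{3.10} directly yields classes (2)--(6). Case C (every $G_i$ is bipartite, possibly empty): Lemma \ref{b1} forces each non-empty $G_i$ to have the form $K_1 \cup K_{s_i,t_i}$, because the $\overline K_2 \cup K_2$ option of Lemma \ref{b1} has already been absorbed by Case A.

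In Case C, if every $G_i$ is empty then $G$ is complete multipartite and coincides with class (7) for $p = q = 0$. Otherwise, with no loss of generality I set $G_1 = K_1 \cup K_{s,t}$ with $t \geq s \geq 1$ and branch on $s$: for $s \geq 3$ Lemma \ref{b2} forces the remaining $G_i$ to be $\overline K_{s_i}$, producing the $s,t \geq 3$ range of class (13); for $s = 2$ Lemma \ref{b3} funnels the possibilities into classes (11), (12), or the $s = 2$ range of (13); for $s = 1$ with $t \geq 3$ Lemma \ref{b4} produces classes (8), (9), (10), together with the $K_1 \cup K_{1,t}$ configurations that merge into (7) or (8); finally, $s = 1, t \leq 2$ gives only the small-summand cases collected in (7). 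The converse direction --- that each listed $G$ satisfies $\lambda_2(G) < 1/2$ --- is already supplied term by term by the preceding results: Theorem \ref{3.3} for (1), Theorem \ref{3.10} for (2)--(6), Lemma \ref{b8} for (7), Lemma \ref{b7} for (8)--(10), Lemma \ref{b6} for (11) and (12), and Lemma \ref{b5} for (13).

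The principal obstacle is bookkeeping rather than new mathematical content: the 13 classes must be shown to be exhaustive, and the overlaps between parameter ranges (for instance, class (7) with $p+q=0$ subsuming the complete multipartite graphs, or the boundary between the $s=2$ sub-case of Lemma \ref{b3} and class (13)) must be reconciled so that no configuration is double-counted or missed. I would handle this by first fixing a canonical normal form $G_1 = K_1 \cup K_{s,t}$ with $s \leq t$ maximising $s+t$ among non-empty summands, then letting the remaining $G_i$ take their representations from Lemma \ref{b1}, and finally matching the resulting parameters $(s,t,s_2,\ldots,s_k)$ against each class in turn.
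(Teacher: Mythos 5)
Your proposal is correct and follows essentially the same route as the paper, which states Theorem \ref{main} without a separate proof precisely because it is the assembly of Theorem \ref{3.3}, Theorem \ref{3.10} and Lemmas \ref{b1}--\ref{b8} that you describe. Your three-way split (a summand with two isolated vertices; a non-bipartite summand; all summands bipartite), the sub-branching on the smaller part $s$ of $K_1\cup K_{s,t}$, and the term-by-term citations for the converse all match the paper's intended argument, with the normalization issues you flag being exactly the bookkeeping the paper leaves implicit.
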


{\bf Final remark.} Those graphs from Theorem 2.1 and Theorem 2.3
enable us to see that without the maximum degree hypothesis, these
graphs with $0<\lambda_{2}< 1/2$ have the second eigenvalue
multiplicity at most the constant $5$. Then, we immediately know
that these connected graphs have small second eigenvalue
multiplicity. This is an especially interesting case related to
Theorem 2.2 in \cite{JTYZZ} and Theorem 1.3 in \cite{CH3} .

Let $G_{n}=({\overline K_2}\cup K_2)\vee {\overline K_{n-4}}$. Since
$G_{n}$ is an induced subgraph of $G_{n+1}$,
$\lambda_{2}(G_{n+1})\geq \lambda_{2}(G_{n})$. Therefore, the
sequence $\lambda_{2}(G_{n})$ increases with $n$. Further, by
Theorem \ref{main} (1), $\lambda_{2}(G_{n})<1/2$, meaning that
$\lim_{n\rightarrow \infty}\lambda_{2}(G_{n})$ exists. On the other
hand, in the proof of Theorem \ref{3.3}, we know that
$\lambda_2^3(G_n)-\lambda_2^2(G_n)-4(n-4)\lambda_2(G_n)+2(n-4)=0$
and, hence,
$\lambda_{2}(G_{n})=1/2+(\lambda_{2}^3(G_{n})-\lambda_{2}^2(G_{n}))/(4(n-4))$.
Therefore, $\lim_{n\rightarrow \infty}\lambda_{2}(G_{n})=1/2$, which
means that $1/2$ is a limit point of the second largest eigenvalues
of graphs. Let $A_2$ be the set of the second largest eigenvalues of
simple graphs without isolated vertex and $c_2$ is the minimum real
number $c$ such that every real number greater than $c$ is a limit
point of $A_2$. It was shown that $c_2\in
[\sqrt{2}-1,\sqrt{2+\sqrt{5}}]$ \cite{ZC}. If we could show that
each of the 13 graph classes in Theorem \ref{main} has (if exists)
finite number of limit points, then it would mean that $A_{2}$ is
nowhere dense in the interval $[0, 1/2]$ and, hence, $c_{2}\in [1/2,
\sqrt{2+\sqrt{5}}]$. We leave it as the following problem.

\noindent{\bf Problem.} Is it true that  $c_{2}\in [1/2, \sqrt{2+\sqrt{5}}]$?

\section{Acknowledgment}

The work is supported by the National Natural Science Foundation of
China\, [Grant numbers, 11971406, 12171402], the Natural Science
Foundation of Fujian Province\,[Grant numbers, 2021J06029,
2021J02048, 2021J01978].

\newpage
\begin{center}
{\Large\bf Appendix}
\end{center}

In the following, for a determinant $D$, we use $R_{j}+kR_{i}$ and $C_{j}+kC_{i}$ to denote the addition of $k$ times the $i$-th row to the $j$-th row and $k$ times the $i$-th column
to the $j$-th column of $D$, respectively.

1. (For the proof of Theorem 2.1) Let $G \cong ({\overline K_2} \cup
{K_2}) \vee {\overline K_{n - 4}}.$ Then

    $\chi (G,\lambda ) = {\left| {\begin{array}{*{20}{c}}
                \lambda &0&0&0&{ - 1}& \cdots &{ - 1}\\
                0&\lambda &0&0&{ - 1}& \cdots &{ - 1}\\
                0&0&\lambda &{ - 1}&{ - 1}& \cdots &{ - 1}\\
                0&0&{ - 1}&\lambda &{ - 1}& \cdots &{ - 1}\\
                { - 1}&{ - 1}&{ - 1}&{ - 1}&\lambda & \cdots &0\\
                \vdots & \vdots & \vdots & \vdots & \vdots & \ddots & \vdots \\
                { - 1}&{ - 1}&{ - 1}&{ - 1}&0& \cdots &\lambda
        \end{array}} \right|_{n \times n}} $

By $C_{1}+C_{2},$ $C_{3}+C_{4},$ $C_{5}+\sum\limits_{i=6}^{n}C_{i}$
 and then by $R_{2}-R_{1},$ $R_{4}-R_{3}$ and
$R_{i}-R_{5}$ $(6\leq i\leq n),$ the determinant becomes
    ${\left| {\begin{array}{*{20}{c}}
                    \lambda &0&0&0&{ - n+4}&{ - 1}& \cdots &{ - 1}\\
                    0&\lambda &0&0&0&0& \cdots &0\\
                    0&0&{\lambda  - 1}&{ - 1}&{ - n+4}&{ - 1}& \cdots &{ - 1}\\
                    0&0&0&{\lambda  + 1}&0&0& \cdots &0\\
                    { - 2}&{ - 1}&{ - 2}&{ - 1}&\lambda &0& \cdots &0\\
                    0&0&0&0&0&\lambda & \cdots &0\\
                    \vdots & \vdots & \vdots & \vdots & \vdots & \vdots & \ddots & \vdots \\
                    0&0&0&0&0&0& \cdots &\lambda
            \end{array}} \right|_{n \times n}}$

Then by Laplace expansion along the $i$-th row $(2\leq i\leq n,$ $i \neq
3,5),$ we obtain

 $\begin{array}{l}
\lambda (\lambda + 1) \cdot {\lambda ^{n - 5}}{\left|
{\begin{array}{*{20}{c}}
                    \lambda &0&{ - n+4}\\
                    0&{\lambda  - 1}&{ -n+4}\\
                    { - 2}&{ - 2}&\lambda
            \end{array}} \right|_{3 \times 3}}= {\lambda ^{n-4}}(\lambda  + 1)({\lambda ^3} - {\lambda ^2} - 4(n-4)\lambda  + 2(n-4)).
    \end{array}$

2. (For the proof of Lemma 2.9) Let $G= ({K_1} \cup({\overline K_s}
\vee {\overline K_2}  \vee {K_2})) \vee {\overline K_t}.$ Then

$$\chi (G,\lambda ) = {\left| {\begin{array}{*{20}{c}}
                \lambda &0& \cdots &0&0&0&0&0&{ - 1}& \cdots &{ - 1}\\
                0&\lambda & \cdots &0&{ - 1}&{ - 1}&{ - 1}&{ - 1}&{ - 1}& \cdots &{ - 1}\\
                \vdots & \vdots & \ddots & \vdots & \vdots & \vdots & \vdots & \vdots & \vdots &{}& \vdots \\
                0&0& \cdots &\lambda &{ - 1}&{ - 1}&{ - 1}&{ - 1}&{ - 1}& \cdots &{ - 1}\\
                0&{ - 1}& \cdots &{ - 1}&\lambda &0&{ - 1}&{ - 1}&{ - 1}& \cdots &{ - 1}\\
                0&{ - 1}& \cdots &{ - 1}&0&\lambda &{ - 1}&{ - 1}&{ - 1}& \cdots &{ - 1}\\
                0&{ - 1}& \cdots &{ - 1}&{ - 1}&{ - 1}&\lambda &{ - 1}&{ - 1}& \cdots &{ - 1}\\
                0&{ - 1}& \cdots &{ - 1}&{ - 1}&{ - 1}&{ - 1}&\lambda &{ - 1}& \cdots &{ - 1}\\
                { - 1}&{ - 1}& \cdots &{ - 1}&{ - 1}&{ - 1}&{ - 1}&{ - 1}&\lambda & \cdots &0\\
                \vdots & \vdots & \cdots & \vdots & \vdots & \vdots & \vdots & \vdots & \vdots & \ddots & \vdots \\
                { - 1}&{ - 1}& \cdots &{ - 1}&{ - 1}&{ - 1}&{ - 1}&{ - 1}&0& \cdots &\lambda
        \end{array}} \right|_{n \times n}}$$

By $C_{2}+\sum\limits_{i=3}^{s+1}C_{i}$, $C_{s+2}+C_{s+3},$
$C_{s+4}+C_{s+5},$ $C_{s+6}+\sum\limits_{i=s+7}^{n}C_{i}$, and then
by $R_{i}-R_{2}$ $(3\leq i\leq s+1),$ $R_{s+3}-R_{s+2},$
$R_{s+5}-R_{s+4},$ $R_{i}-R_{s+6}$ $(s+7\leq i\leq n),$ the
determinant becomes
    $$~~~~~~~~~~ {\left| {\begin{array}{*{20}{c}}
                \lambda &0&0& \cdots &0&0&0&0&0&{ - t}&{ - 1}& \cdots &{ - 1}\\
                0&\lambda &0& \cdots &0&{ - 2}&{ - 1}&{ - 2}&{ - 1}&{ - t}&{ - 1}& \cdots &{ - 1}\\
                0&0&\lambda & \cdots &0&0&0&0&0&0&0& \cdots &0\\
                \vdots & \vdots & \vdots & \ddots & \vdots & \vdots & \vdots & \vdots & \vdots & \vdots & \vdots &{}& \vdots \\
                0&0&0& \cdots &\lambda &0&0&0&0&0&0& \cdots &0\\
                0&{ - s}&{ - 1}& \cdots &{ - 1}&\lambda &0&{ - 2}&{ - 1}&{ - t}&{ - 1}& \cdots &{ - 1}\\
                0&0&0& \cdots &0&0&\lambda &0&0&0&0& \cdots &0\\
                0&{ - s}&{ - 1}& \cdots &{ - 1}&{ - 2}&{ - 1}&{\lambda  - 1}&{ - 1}&{ - t}&{ - 1}& \cdots &{ - 1}\\
                0&0&0& \cdots &0&0&0&0&{\lambda  + 1}&0&0& \cdots &0\\
                { - 1}&{ - s}&{ - 1}& \cdots &{ - 1}&{ - 2}&{ - 1}&{ - 2}&{ - 1}&\lambda &0& \cdots &0\\
                0&0&0& \cdots &0&0&0&0&0&0&\lambda & \cdots &0\\
                \vdots & \vdots & \vdots &{}& \vdots & \vdots & \vdots & \vdots & \vdots & \vdots & \vdots &{}& \vdots \\
                0&0&0& \cdots &0&0&0&0&0&0&0& \cdots &\lambda
        \end{array}} \right|_{n \times n}}$$

Then by Laplace expansion along Rows $i$ $(3\leq i\leq n,$ $i \neq
s+2,s+4, s+6),$ we obtain

    $ {\lambda ^{s + t - 1}}(\lambda  + 1){\left| {\begin{array}{*{20}{c}}
                \lambda &0&0&0&{ - t}\\
                0&\lambda &{ - 2}&{ - 2}&{ - t}\\
                0&{ - s}&\lambda &{ - 2}&{ - t}\\
                0&{ - s}&{ - 2}&{\lambda  - 1}&{ - t}\\
                { - 1}&{ - s}&{ - 2}&{ - 2}&\lambda
        \end{array}} \right|}$

    $ = {\lambda ^{s + t - 1}}(\lambda  + 1)[{\lambda ^5} - {\lambda ^4} - (st + 5t + 4s + 4){\lambda ^3} - (7st + 6s + 5t){\lambda ^2} - (4st - 4t)\lambda  + 6st]$.

3. (For the proof of Lemma 2.10) Let $G=({K_1} \cup({\overline K_s}
\vee {K_3})) \vee{\overline K_t}$. Then
$$\chi (G,\lambda ) =
{\left| {\begin{array}{*{20}{c}}
                \lambda &0& \cdots &0&0&0&0&{ - 1}& \cdots &{ - 1}\\
                0&\lambda & \cdots &0&{ - 1}&{ - 1}&{ - 1}&{ - 1}& \cdots &{ - 1}\\
                \vdots & \vdots & \ddots & \vdots & \vdots & \vdots & \vdots & \vdots &{}& \vdots \\
                0&0& \cdots &\lambda &{ - 1}&{ - 1}&{ - 1}&{ - 1}& \cdots &{ - 1}\\
                0&{ - 1}& \cdots &{ - 1}&\lambda &{ - 1}&{ - 1}&{ - 1}& \cdots &{ - 1}\\
                0&{ - 1}& \cdots &{ - 1}&{ - 1}&\lambda &{ - 1}&{ - 1}& \cdots &{ - 1}\\
                0&{ - 1}& \cdots &{ - 1}&{ - 1}&{ - 1}&\lambda &{ - 1}& \cdots &{ - 1}\\
                { - 1}&{ - 1}& \cdots &{ - 1}&{ - 1}&{ - 1}&{ - 1}&\lambda & \cdots &0\\
                \vdots & \vdots &{}& \vdots & \vdots & \vdots & \vdots & \vdots & \ddots & \vdots \\
                { - 1}&{ - 1}& \cdots &{ - 1}&{ - 1}&{ - 1}&{ - 1}&0& \cdots &\lambda
        \end{array}} \right|_{n \times n}}$$

By $C_{2}+\sum\limits_{i=3}^{s+1}C_{i}$,
$C_{s+2}+\sum\limits_{i=s+3}^{s+4}C_{i}$,
$C_{s+5}+\sum\limits_{i=s+6}^{n}C_{i}$, and then by $R_{i}-R_{2}$
$(3\leq i\leq s+1),$ $R_{i}-R_{s+2}$ $(s+3\leq i\leq s+4),$
$R_{i}-R_{s+5}$ $(s+6\leq i\leq n),$ the determinant becomes
    $${\rm{  }}{\left| {\begin{array}{*{20}{c}}
                \lambda &0&0& \cdots &0&0&0&0&{ - t}&{ - 1}& \cdots &{ - 1}\\
                0&\lambda &0& \cdots &0&{ - 3}&{ - 1}&{ - 1}&{ - t}&{ - 1}& \cdots &{ - 1}\\
                0&0&\lambda & \cdots &0&0&0&0&0&0& \cdots &0\\
                \vdots & \vdots & \vdots & \ddots & \vdots & \vdots & \vdots & \vdots & \vdots & \vdots &{}& \vdots \\
                0&0&0& \cdots &\lambda &0&0&0&0&0& \cdots &0\\
                0&{ - s}&{ - 1}& \cdots &{ - 1}&{\lambda  - 2}&{ - 1}&{ - 1}&{ - t}&{ - 1}& \cdots &{ - 1}\\
                0&0&0& \cdots &0&0&{\lambda  + 1}&0&0&0& \cdots &0\\
                0&0&0& \cdots &0&0&0&{\lambda  + 1}&0&0& \cdots &0\\
                { - 1}&{ - s}&{ - 1}& \cdots &{ - 1}&{ - 3}&{ - 1}&{ - 1}&\lambda &0& \cdots &0\\
                0&0&0& \cdots &0&0&0&0&0&\lambda & \cdots &0\\
                \vdots & \vdots & \vdots &{}& \vdots & \vdots & \vdots & \vdots & \vdots & \vdots & \ddots & \vdots \\
                0&0&0& \cdots &0&0&0&0&0&0& \cdots &\lambda
        \end{array}} \right|_{n \times n}}$$

Then by Laplace expansion along Rows $i$ $(3\leq i\leq n,$ $i \neq
s+2,s+5),$ we obtain

    ${\lambda ^{s + t - 2}} \cdot {(\lambda  + 1)^2}{\left| {\begin{array}{*{20}{c}}
                \lambda &0&0&{ - t}\\
                0&\lambda &{ - 3}&{ - t}\\
                0&{ - s}&{\lambda  - 2}&{ - t}\\
                { - 1}&{ - s}&{ - 3}&\lambda
        \end{array}} \right|}$

    $ = {\lambda ^{s + t - 2}}{(\lambda  + 1)^2}[{\lambda ^4} - 2{\lambda ^3} - (st + 4t + 3s){\lambda ^2} - (4st - 2t)\lambda  + 3st]$.

4. (For the proof of Lemma 2.11) Let $G=({K_1} \cup ({\overline
K_{{s_1}}}  \vee {\overline K_{{s_2}}} \vee {\overline K_{{s_3}}} ))
\vee{\overline K_t},1 \le {s_3} \le {s_2} \le {s_1}$. Then
$$\chi (G,\lambda ) = {\left| {\begin{array}{*{20}{c}}
                \lambda &0& \cdots &0&0& \cdots &0&0& \cdots &0&{ - 1}& \cdots &{ - 1}\\
                0&\lambda & \cdots &0&{ - 1}& \cdots &{ - 1}&{ - 1}& \cdots &{ - 1}&{ - 1}& \cdots &{ - 1}\\
                \vdots & \vdots & \ddots & \vdots & \vdots &{}& \vdots & \vdots &{}& \vdots & \vdots &{}& \vdots \\
                0&0& \cdots &\lambda &{ - 1}& \cdots &{ - 1}&{ - 1}& \cdots &{ - 1}&{ - 1}& \cdots &{ - 1}\\
                0&{ - 1}& \cdots &{ - 1}&\lambda & \cdots &0&{ - 1}& \cdots &{ - 1}&{ - 1}& \cdots &{ - 1}\\
                \vdots & \vdots &{}& \vdots & \vdots & \ddots & \vdots & \vdots &{}& \vdots & \vdots &{}& \vdots \\
                0&{ - 1}& \cdots &{ - 1}&0& \cdots &\lambda &{ - 1}& \cdots &{ - 1}&{ - 1}& \cdots &{ - 1}\\
                0&{ - 1}& \cdots &{ - 1}&{ - 1}& \cdots &{ - 1}&\lambda & \cdots &0&{ - 1}& \cdots &{ - 1}\\
                \vdots & \vdots &{}& \vdots & \vdots &{}& \vdots & \vdots & \ddots & \vdots & \vdots &{}& \vdots \\
                0&{ - 1}& \cdots &{ - 1}&{ - 1}& \cdots &{ - 1}&0& \cdots &\lambda &{ - 1}& \cdots &{ - 1}\\
                { - 1}&{ - 1}& \cdots &{ - 1}&{ - 1}& \cdots &{ - 1}&{ - 1}& \cdots &{ - 1}&\lambda & \cdots &0\\
                \vdots & \vdots &{}& \vdots & \vdots &{}& \vdots & \vdots &{}& \vdots & \vdots & \ddots & \vdots \\
                { - 1}&{ - 1}& \cdots &{ - 1}&{ - 1}& \cdots &{ - 1}&{ - 1}& \cdots &{ - 1}&0& \cdots &\lambda
        \end{array}} \right|_{n \times n}}$$

By $C_{2}+\sum\limits_{i=3}^{s_{1}+1}C_{i}$,
$C_{s_{1}+2}+\sum\limits_{i=s_{1}+3}^{s_{1}+s_{2}+1}C_{i}$,
$C_{s_{1}+s_{2}+2}+\sum\limits_{i=s_{1}+s_{2}+3}^{s_{1}+s_{2}+s_{3}+1}C_{i}$,
$C_{s_{1}+s_{2}+s_{3}+2}+\sum\limits_{i=s_{1}+s_{2}+s_{3}+3}^{n}C_{i}$,
 and then by $R_{i}-R_{2}$
$(3\leq i\leq s_{1}+1),$ $R_{i}-R_{s_{1}+2}$ $(s_{1}+3\leq i\leq
s_{1}+s_{2}+1),$ $R_{i}-R_{s_{1}+s_{2}+2}$ $(s_{1}+s_{2}+3\leq i\leq
s_{1}+s_{2}+s_{3}+1),$ $R_{i}-R_{s_{1}+s_{2}+s_{3}+2}$
$(s_{1}+s_{2}+s_{3}+3\leq i\leq n),$ the determinant becomes

    $\footnotesize{\left| {\begin{array}{*{20}{c}}
                \lambda &0&0& \cdots &0&0&0& \cdots &0&0&0& \cdots &0&{ - t}&{ - 1}& \cdots &{ - 1}\\
                0&\lambda &0& \cdots &0&{ - {s_2}}&{ - 1}& \cdots &{ - 1}&{ - {s_3}}&{ - 1}& \cdots &{ - 1}&{ - t}&{ - 1}& \cdots &{ - 1}\\
                0&0&\lambda & \cdots &0&0&0& \cdots &0&0&0& \cdots &0&0&0& \cdots &0\\
                \vdots & \vdots & \vdots & \ddots & \vdots & \vdots & \vdots &{}& \vdots & \vdots & \vdots &{}& \vdots & \vdots & \vdots &{}& \vdots \\
                0&0&0& \cdots &\lambda &0&0& \cdots &0&0&0& \cdots &0&0&0& \cdots &0\\
                0&{ - {s_1}}&{ - 1}& \cdots &{ - 1}&\lambda &0& \cdots &0&{ - {s_3}}&{ - 1}& \cdots &{ - 1}&{ - t}&{ - 1}& \cdots &{ - 1}\\
                0&0&0& \cdots &0&0&\lambda & \cdots &0&0&0& \cdots &0&0&0& \cdots &0\\
                \vdots & \vdots & \vdots &{}& \vdots & \vdots & \vdots & \ddots & \vdots & \vdots & \vdots &{}& \vdots & \vdots & \vdots &{}& \vdots \\
                0&0&0& \cdots &0&0&0& \cdots &\lambda &0&0& \cdots &0&0&0& \cdots &0\\
                0&{ - {s_1}}&{ - 1}& \cdots &{ - 1}&{ - {s_2}}&{ - 1}& \cdots &{ - 1}&\lambda &0& \cdots &0&{ - t}&{ - 1}& \cdots &{ - 1}\\
                0&0&0& \cdots &0&0&0& \cdots &0&0&\lambda & \cdots &0&0&0& \cdots &0\\
                \vdots & \vdots & \vdots &{}& \vdots & \vdots & \vdots &{}& \vdots & \vdots & \vdots &\ddots& \vdots & \vdots & \vdots &{}& \vdots \\
                0&0&0& \cdots &0&0&0& \cdots &0&0&0& \cdots &\lambda &0&0& \cdots &0\\
                { - 1}&{ - {s_1}}&{ - 1}& \cdots &{ - 1}&{ - {s_2}}&{ - 1}& \cdots &{ - 1}&{ - {s_3}}&{ - 1}& \cdots &{ - 1}&\lambda &0& \cdots &0\\
                0&0&0& \cdots &0&0&0& \cdots &0&0&0& \cdots &0&0&\lambda & \cdots &0\\
                \vdots & \vdots & \vdots &{}& \vdots & \vdots & \vdots &{}& \vdots & \vdots & \vdots &{}& \vdots & \vdots & \vdots & \ddots & \vdots \\
                0&0&0& \cdots &0&0&0& \cdots &0&0&0& \cdots &0&0&0& \cdots &\lambda
        \end{array}} \right|_{n \times n}}$

Then by Laplace expansion along Rows $i$ $(3\leq i\leq n,$ $i \neq
s_{1}+2,s_{1}+s_{2}+2,s_{1}+s_{2}+s_{3}+2),$ we obtain

    ${\lambda ^{{s_1} + {s_2} + {s_3} + t - 4}}{\left| {\begin{array}{*{20}{c}}
                \lambda &0&0&0&{ - t}\\
                0&\lambda &{ - {s_2}}&{ - {s_3}}&{ - t}\\
                0&{ - {s_1}}&\lambda &{ - {s_3}}&{ - t}\\
                0&{ - {s_1}}&{ - {s_2}}&\lambda &{ - t}\\
                { - 1}&{ - {s_1}}&{ - {s_2}}&{ - {s_3}}&\lambda
        \end{array}} \right|}$\\
    $\begin{array}{l}
        = {\lambda ^{{s_1} + {s_2} + {s_3} + t - 4}}[{\lambda ^5} - ({s_1}{s_2} + {s_1}{s_3} + {s_2}{s_3} + {s_1}t + {s_2}t + {s_3}t + t){\lambda ^3} -\\ 2({s_1}{s_2}{s_3} + {s_1}{s_2}t + {s_1}{s_3}t + {s_2}{s_3}t){\lambda ^2} +
        ({s_1}{s_2}t + {s_1}{s_3}t + {s_2}{s_3}t - 3{s_1}{s_2}{s_3}t)\lambda  + 2{s_1}{s_2}{s_3}t].
    \end{array}$

5. (For the proof of Lemma 2.12) Let $G=({K_1} \cup ({\overline K_s}
\vee {\overline P_3}))  \vee{\overline K_t}, s\ge 1$. Then
$$\chi (G,\lambda ) = {\left| {\begin{array}{*{20}{c}}
                \lambda &0& \cdots &0&0&0&0&{ - 1}& \cdots &{ - 1}\\
                0&\lambda & \cdots &0&{ - 1}&{ - 1}&{ - 1}&{ - 1}& \cdots &{ - 1}\\
                \vdots & \vdots & \ddots & \vdots & \vdots & \vdots & \vdots & \vdots &{}& \vdots \\
                0&0& \cdots &\lambda &{ - 1}&{ - 1}&{ - 1}&{ - 1}& \cdots &{ - 1}\\
                0&{ - 1}& \cdots &{ - 1}&\lambda &0&0&{ - 1}& \cdots &{ - 1}\\
                0&{ - 1}& \cdots &{ - 1}&0&\lambda &{ - 1}&{ - 1}& \cdots &{ - 1}\\
                0&{ - 1}& \cdots &{ - 1}&0&{ - 1}&\lambda &{ - 1}& \cdots &{ - 1}\\
                { - 1}&{ - 1}& \cdots &{ - 1}&{ - 1}&{ - 1}&{ - 1}&\lambda & \cdots &0\\
                \vdots & \vdots &{}& \vdots & \vdots & \vdots & \vdots & \vdots & \ddots & \vdots \\
                { - 1}&{ - 1}& \cdots &{ - 1}&{ - 1}&{ - 1}&{ - 1}&0& \cdots &\lambda
        \end{array}} \right|_{n \times n}}$$

By $C_{2}+\sum\limits_{i=3}^{s+1}C_{i}$, $C_{s+3}+C_{s+4},$
$C_{s+5}+\sum\limits_{i=s+6}^{n}C_{i}$, and then by $R_{i}-R_{2}$
$(3\leq i\leq s+1),$ $R_{s+4}-R_{s+3},$ $R_{i}-R_{s+5}$ $(s+6\leq
i\leq n),$ the determinant becomes
$${\left| {\begin{array}{*{20}{c}}
                \lambda &0&0& \cdots &0&0&0&0&{ - t}&{ - 1}& \cdots &{ - 1}\\
                0&\lambda &0& \cdots &0&{ - 1}&{ - 2}&{ - 1}&{ - t}&{ - 1}& \cdots &{ - 1}\\
                0&0&\lambda & \cdots &0&0&0&0&0&0& \cdots &0\\
                \vdots & \vdots & \vdots & \ddots & \vdots & \vdots & \vdots & \vdots & \vdots & \vdots &{}& \vdots \\
                0&0&0& \cdots &\lambda &0&0&0&0&0& \cdots &0\\
                0&{ - s}&{ - 1}& \cdots &{ - 1}&\lambda &0&0&{ - t}&{ - 1}& \cdots &{ - 1}\\
                0&{ - s}&{ - 1}& \cdots &{ - 1}&0&{\lambda  - 1}&{ - 1}&{ - t}&{ - 1}& \cdots &{ - 1}\\
                0&0&0& \cdots &0&0&0&{\lambda  + 1}&0&0& \cdots &0\\
                { - 1}&{ - s}&{ - 1}& \cdots &{ - 1}&{ - 1}&{ - 2}&{ - 1}&\lambda &0& \cdots &0\\
                0&0&0& \cdots &0&0&0&0&0&\lambda & \cdots &0\\
                \vdots & \vdots & \vdots &{}& \vdots & \vdots & \vdots & \vdots & \vdots & \vdots &\ddots& \vdots \\
                0&0&0& \cdots &0&0&0&0&0&0& \cdots &\lambda
        \end{array}} \right|_{n \times n}}$$

Then by Laplace expansion along Rows $i$ $(3\leq i\leq n,$ $i \neq
s+2,s+3,s+5),$ we obtain

${\lambda ^{s+t-2}}(\lambda  + 1){\left| {\begin{array}{*{20}{c}}
                \lambda &0&0&0&{ - t}\\
                0&\lambda &{ - 1}&{ - 2}&{ - t}\\
                0&{ - s}&\lambda &0&{ - t}\\
                0&{ - s}&0&{\lambda  - 1}&{ - t}\\
                { - 1}&{ - s}&{ - 1}&{ - 2}&\lambda
        \end{array}} \right|}$

$ = {\lambda ^{s + t - 2}}(\lambda  + 1)[{\lambda ^5} - {\lambda ^4}
- (st + 3s + 4t){\lambda ^3} - (5st - s - 2t){\lambda ^2} +
5st\lambda  - st]$.

6. (For the proof of Lemma 2.17) Let $G=({K_1} \cup { K_{{s,t}}})
\vee {\overline K_{{s_2}}} \vee {\overline K_{{s_3}}}\vee\cdots
\vee{\overline K_{{s_k}}}$. Then $\chi (G,\lambda )= $
$$\footnotesize{\left| {\begin{array}{*{20}{c}}
                \lambda &0& \cdots &0&0& \cdots &0&{ - 1}& \cdots &{ - 1}&{ - 1}& \cdots &{ - 1}& \cdots&{ - 1}& \cdots &{ - 1}\\
                0&\lambda & \cdots &0&{ - 1}& \cdots &{ - 1}&{ - 1}& \cdots &{ - 1}&{ - 1}& \cdots &{ - 1}& \cdots&{ - 1}& \cdots &{ - 1}\\
                \vdots & \vdots & \ddots & \vdots & \vdots &{}& \vdots & \vdots &{}& \vdots & \vdots &{}& \vdots & {} & \vdots &{}& \vdots \\
                0&0& \cdots &\lambda &{ - 1}& \cdots &{ - 1}&{ - 1}& \cdots &{ - 1}&{ - 1}& \cdots &{ - 1}& \cdots&{ - 1}& \cdots &{ - 1}\\
                0&{ - 1}& \cdots &{ - 1}&\lambda & \cdots &0&{ - 1}& \cdots &{ - 1}&{ - 1}& \cdots &{ - 1}& \cdots&{ - 1}& \cdots &{ - 1}\\
                \vdots & \vdots &{}& \vdots & \vdots & \ddots & \vdots & \vdots &{}& \vdots & \vdots &{}& \vdots &{} & \vdots &{}& \vdots \\
                0&{ - 1}& \cdots &{ - 1}&0& \cdots &\lambda &{ - 1}& \cdots &{ - 1}&{ - 1}& \cdots &{ - 1}& \cdots&{ - 1}& \cdots &{ - 1}\\
                { - 1}&{ - 1}& \cdots &{ - 1}&{ - 1}& \cdots &{ - 1}&\lambda & \cdots &0&{ - 1}& \cdots &{ - 1}& \cdots&{ - 1}& \cdots &{ - 1}\\
                \vdots & \vdots &{}& \vdots & \vdots &{}& \vdots & \vdots & \ddots & \vdots & \vdots &{}& \vdots & {} & \vdots &{}& \vdots \\
                { - 1}&{ - 1}& \cdots &{ - 1}&{ - 1}& \cdots &{ - 1}&0& \cdots &\lambda &{ - 1}& \cdots &{ - 1}& \cdots&{ - 1}& \cdots &{ - 1}\\
                { - 1}&{ - 1}& \cdots &{ - 1}&{ - 1}& \cdots &{ - 1}&{ - 1}& \cdots &{ - 1}&\lambda & \cdots &0& \cdots&{ - 1}& \cdots &{ - 1}\\
                \vdots & \vdots &{}& \vdots & \vdots &{}& \vdots & \vdots &{}& \vdots & \vdots & \ddots & \vdots & {} & \vdots &{}& \vdots \\
                { - 1}&{ - 1}& \cdots &{ - 1}&{ - 1}& \cdots &{ - 1}&{ - 1}& \cdots &{ - 1}&0& \cdots &\lambda& \cdots&{ - 1}& \cdots &{ - 1}\\
                \vdots & \vdots &{}& \vdots & \vdots &{}& \vdots & \vdots &{}& \vdots & \vdots & {} & \vdots & \ddots & \vdots &{}& \vdots \\
                { - 1}&{ - 1}& \cdots &{ - 1}&{ - 1}& \cdots &{ - 1}&{ - 1}& \cdots &{ - 1}&{ - 1} & \cdots &{ - 1}& \cdots&{\lambda}& \cdots &{ 0}\\
                \vdots & \vdots &{}& \vdots & \vdots &{}& \vdots & \vdots &{}& \vdots & \vdots & {} & \vdots & {} & \vdots &\ddots& \vdots \\
                { - 1}&{ - 1}& \cdots &{ - 1}&{ - 1}& \cdots &{ - 1}&{ - 1}& \cdots &{ - 1}&{ - 1}& \cdots &{ - 1}& \cdots&{0}& \cdots &{ \lambda}
        \end{array}} \right|_{n \times n}}$$

By $C_{2}+\sum\limits_{i=3}^{s+1}C_{i}$,
$C_{s+2}+\sum\limits_{i=s+3}^{s+t+1}C_{i}$,
$C_{s+t+2}+\sum\limits_{i=s+t+3}^{s+t+s_{2}+1}C_{i}$,
$C_{s+t+s_{2}+2}+\sum\limits_{i=s+t+s_{2}+3}^{s+t+s_{2}+s_{3}+1}C_{i}$,
 $\cdots,$
$C_{s+t+s_{2}+\cdots+s_{k-1}+2}+\sum\limits_{i=s+t+3+\sum\limits_{j=2}^{k-1}s_{j}}^{n}C_{i}$
, and then by operations $R_{i}-R_{2}$ $(3\leq i\leq s+1),$
$R_{i}-R_{s+2}$ $(s+3\leq i\leq s+t+1),$ $R_{i}-R_{s+t+2}$
$(s+t+3\leq i\leq s+t+s_{2}+1)$, $\cdots$,
$R_{i}-R_{s+t+s_{2}+\cdots+s_{k-1}+2}$
$(s+t+3+\sum\limits_{j=2}^{k-1}s_{j}\leq i\leq n),$ the determinant
becomes
    $$\footnotesize{\left| {\begin{array}{*{20}{c}}
                \lambda &0&0& \cdots &0&0&0& \cdots &0&{ -s_{2}}&{ - 1}& \cdots &{ - 1}& \cdots &{ -s_{k}} &{ - 1}& \cdots&{ - 1}\\
                0&\lambda &0& \cdots &0&{ - t}&{ - 1}& \cdots &{ - 1}&{ -s_{2}}&{ - 1}& \cdots &{ - 1}& \cdots &{ -s_{k}} &{ - 1}& \cdots&{ - 1}\\
                0&0&\lambda & \cdots &0&0&0& \cdots &0&0&0& \cdots &0& \cdots &0 &0& \cdots &0\\
                \vdots & \vdots & \vdots & \ddots & \vdots & \vdots &\vdots & {} & \vdots & \vdots& \vdots & {} & \vdots & {} & \vdots & \vdots &{}& \vdots \\
                0&0&0& \cdots &\lambda &0&0& \cdots &0&0&0  & \cdots & 0& \cdots &0&0 &\cdots& 0\\
                0&{ -s} &{ - 1}& \cdots &{ - 1}&{ \lambda}&{ 0}& \cdots &{ 0}&{ -s_{2}}&{ - 1}& \cdots &{ - 1}& \cdots &{ -s_{k}} &{ - 1}& \cdots&{ - 1}\\
                0&0&0 & \cdots &0&0 &\lambda & \cdots &0&0&0& \cdots &0& \cdots &0 &0& \cdots &0\\
                \vdots & \vdots & \vdots & {} & \vdots & \vdots &\vdots & \ddots & \vdots & \vdots& \vdots & {} & \vdots & {} & \vdots & \vdots &{}& \vdots \\
                0&0&0& \cdots &0 &0&0& \cdots & \lambda &0&0  & \cdots & 0& \cdots &0&0 &\cdots& 0\\
                { -1}&{ -s} &{ - 1}& \cdots &{ - 1}&{ { -t}}&{- 1}& \cdots &{ - 1}&{ \lambda}&{ 0}& \cdots &{ 0}& \cdots &{ -s_{k}} &{ - 1}& \cdots&{ - 1}\\
                0&0&0 & \cdots &0&0 &0 & \cdots &0&0&\lambda& \cdots &0& \cdots &0 &0& \cdots &0\\
                \vdots & \vdots & \vdots & {} & \vdots & \vdots &\vdots & {} & \vdots & \vdots& \vdots & \ddots & \vdots & {} & \vdots & \vdots &{}& \vdots \\
                0&0&0& \cdots &0 &0&0& \cdots & 0 &0&0& \cdots & \lambda& \cdots &0&0 &\cdots& 0\\
                \vdots & \vdots & \vdots & {} & \vdots & \vdots &\vdots & {} & \vdots & \vdots& \vdots & {} & \vdots & \ddots & \vdots & \vdots &{}& \vdots \\
                { -1}&{ -s} &{ - 1}& \cdots &{ - 1}&{ { -t}}&{- 1}& \cdots &{ - 1}&{ -s_{2}}&{-1}& \cdots &{-1}& \cdots &{ \lambda} &{ 0}& \cdots&{ 0}\\
                0&0&0 & \cdots &0&0 &0 & \cdots &0&0&\lambda& \cdots &0& \cdots &0 &\lambda& \cdots &0\\
                \vdots & \vdots & \vdots & {} & \vdots & \vdots &\vdots & {} & \vdots & \vdots& \vdots & {} & \vdots & {} & \vdots & \vdots &\ddots& \vdots \\
                0&0&0& \cdots &0 &0&0& \cdots & 0 &0&0& \cdots & \lambda& \cdots &0&0 &\cdots& \lambda
        \end{array}} \right|_{n \times n}}$$

Then by Laplace expansion along Rows $i$ $(3\leq i\leq n),$ $i \neq
2+s,2+s+t,2+s+t+s_{2},2+s+t+s_{2}+s_{3},\cdots,
2+s+t+s_{2}+\cdots+s_{k-1}),$ we obtain

$${\left| {\begin{array}{*{20}{c}}
                \lambda &0&0&{ -s_{2}}&\cdots&{ -s_{k}}\\
                0&\lambda &{ - t}&{ - {s_2}}&\cdots&{ -s_{k}}\\
                0&{ - s}&\lambda &{ - {s_2}}&\cdots&{ -s_{k}}\\
                {-1}&{ - s}&{ - t}&\lambda &\cdots&{-s_{k}}\\
                \vdots & \vdots & \vdots & \vdots & \ddots & \vdots\\
                { - 1}&{ - s}&{ -t}&{ - {s_2}}&\cdots&\lambda
        \end{array}} \right|_{(k+2) \times (k+2)}}={\left| {\begin{array}{*{20}{c}}
                1&0 &0&0&0&\cdots&0\\
                1&\lambda &0&0&{ -s_{2}}&\cdots&{ -s_{k}}\\
                1&0&\lambda &{ - t}&{ - {s_2}}&\cdots&{ -s_{k}}\\
                1&0&{ - s}&\lambda &{ - {s_2}}&\cdots&{ -s_{k}}\\
                1&{-1}&{ - s}&{ - t}&\lambda &\cdots&{-s_{k}}\\
                \vdots&\vdots & \vdots & \vdots & \vdots & \ddots & \vdots\\
                1&{ - 1}&{ - s}&{ -t}&{ - {s_2}}&\cdots&\lambda
        \end{array}} \right|_{(k+3) \times (k+3)}}$$

By $C_{2}+C_{1},$ $C_{3}+sC_{1},$
 $C_{4}+tC_{1},$
 $C_{3+i}+s_{i}C_{1}$
$(2\leq i\leq k),$ and then by
$R_{1}-\sum\limits_{i=2}^{k}\frac{s_{i}}{\lambda+s_{i}}R_{i+3}$
$(\lambda\neq -s_{i})$,

$${\left| {\begin{array}{*{20}{c}}
                1-\displaystyle\sum_{i=2}^{k}\frac{s_i}{\lambda+s_i}&1 &s&t&0&\cdots&0\\
                1&{\lambda+1} &s&t&0&\cdots&0\\
                1&1&{\lambda+s} &0&0&\cdots&0\\
                1&1&0&{\lambda+t}&0&\cdots&0\\
                1&0&0&0&{\lambda+s_{2}} &\cdots&0\\
                \vdots&\vdots & \vdots & \vdots & \vdots & \ddots & \vdots\\
                1&0&0&0&0&\cdots&{\lambda+s_{k}}
        \end{array}} \right|_{(k+3) \times (k+3)}}$$

Then by Laplace expansion along Columns $i$ $(5\leq i\leq k+3),$

$\chi (G,\lambda ) =\lambda^{(s+t+s_2+s_3+\cdots+s_k-k-1)}
(\displaystyle\prod_{i = 2}^{k}(\lambda+s_i)){\left|
{\begin{array}{*{20}{c}}
     1-\displaystyle\sum_{i=2}^{k}\frac{s_i}{\lambda+s_i}&1&s&t\\
        1&\lambda+1&s&t\\
        1&1&\lambda+s&0\\
        1&1&0&\lambda+t
            \end{array}} \right|.}$

Let $\delta(\lambda,s,t,s_2,\ldots,s_k)={\left|
{\begin{array}{*{20}{c}}
     1-\displaystyle\sum_{i=2}^{k}\frac{s_i}{\lambda+s_i}&1&s&t\\
        1&\lambda+1&s&t\\
        1&1&\lambda+s&0\\
        1&1&0&\lambda+t
            \end{array}} \right|}$
            $$=\left(1-\sum\limits_{i=2}^k\frac{s_i}{\lambda+s_i}\right)(\lambda^{3}+(s+t+1)\lambda^{2}+st\lambda-st)-(s+t+1)\lambda^{2}-2st\lambda+st.$$

7. (For the proof of Lemma 2.18) Let $G=(K_1\cup K_{2,3})\vee
(K_1\cup K_{1,1})$. Then
$$\chi(G,\lambda)={\left| {\begin{array}{*{20}{c}}
                {\lambda}&0 &0&0&0&0 &{-1}&{-1}&{-1}\\
                0&{\lambda} &0&{-1}&{-1}&{-1}&{-1}&{-1}&{-1}\\
                0& 0&{\lambda}&{-1}&{-1}&{-1}&{-1}&{-1}&{-1}\\
                0& {-1}&{-1}&{\lambda}&0&0&{-1}&{-1}&{-1}\\
                0& {-1}&{-1}&0&{\lambda}&0&{-1}&{-1}&{-1}\\
                0& {-1}&{-1}&0&0&{\lambda}&{-1}&{-1}&{-1}\\
                {-1}& {-1}&{-1}&{-1}&{-1}&{-1}&{\lambda}&{0}&{0}\\
                {-1}& {-1}&{-1}&{-1}&{-1}&{-1}&{0}&{\lambda}&{-1}\\
                {-1}& {-1}&{-1}&{-1}&{-1}&{-1}&{0}&{-1}&{\lambda}
        \end{array}} \right|}.$$
By MATLAB, we have $\lambda_{2}(G)\approx 0.4974<0.5$.

Let $G=(K_1\cup K_{2,2})\vee (K_1\cup K_{1,1})\vee{\overline
K_{s_3}}$. Then

$$\chi(G,\lambda)={\left| {\begin{array}{*{20}{c}}
                {\lambda}&0 &0&0&0&{-1} &{-1}&{-1}&{-1}&\cdots&{-1}\\
                0&{\lambda} &0&{-1}&{-1}&{-1}&{-1}&{-1}&{-1}&\cdots&{-1}\\
                0& 0&{\lambda}&{-1}&{-1}&{-1}&{-1}&{-1}&{-1}&\cdots&{-1}\\
                0& {-1}&{-1}&{\lambda}&0&{-1}&{-1}&{-1}&{-1}&\cdots&{-1}\\
                0& {-1}&{-1}&0&{\lambda}&{-1}&{-1}&{-1}&{-1}&\cdots&{-1}\\
                {-1}& {-1}&{-1}&{-1}&{-1}&{\lambda}&0&0&{-1}&\cdots&{-1}\\
                {-1}& {-1}&{-1}&{-1}&{-1}&0&{\lambda}&{-1}&{-1}&\cdots&{-1}\\
                {-1}& {-1}&{-1}&{-1}&{-1}&0&{-1}&{\lambda}&{-1}&\cdots&{-1}\\
                {-1}&{-1}&{-1}&{-1}&{-1}&{-1}&{-1}&{-1}&{\lambda}&\cdots&{0}\\
                \vdots & \vdots & \vdots & \vdots & \vdots & \vdots & \vdots& \vdots & \vdots & \ddots & \vdots\\
                {-1}& {-1}&{-1}&{-1}&{-1}&{-1}&{-1}&{-1}&{0}&\cdots&{\lambda}
        \end{array}} \right|_{n \times n}}.$$

By $C_{i}+C_{i+1}$ $(i=2,4,7),$ $C_{9}+\sum\limits_{i=10}^{n}C_{i}$
, and then by $R_{i+1}-R_{i}$ $(i=2,4,7),$ $R_{i}-R_{9}$ $(10\leq
i\leq n),$ the determinant becomes

$${\left| {\begin{array}{*{20}{c}}
                {\lambda}&0 &0&0&0&{-1} &{-2}&{-1}&{-s_{3}}&{-1}&\cdots&{-1}\\
                0&{\lambda} &0&{-2}&{-1}&{-1} &{-2}&{-1}&{-s_{3}}&{-1}&\cdots&{-1}\\
                0&0&{\lambda} &{0}&{0}&{0}&{0}&{0}&{0}&{0}&\cdots&{0}\\
                0& {-2}&{-1}&{\lambda}&{0}&{-1}&{-2}&{-1}&{-s_{3}}&{-1}&\cdots&{-1}\\
                0&0&{0}&{0}&{\lambda} &{0}&{0}&{0}&{0}&{0}&\cdots&{0}\\
                {-1}& {-2}&{-1}&{-2}&{-1}&{\lambda}&{0}&{0}&{-s_{3}}&{-1}&\cdots&{-1}\\
                {-1}& {-2}&{-1}&{-2}&{-1}&{0}&{\lambda-1}&{-1}&{-s_{3}}&{-1}&\cdots&{-1}\\
                 0 &0&{0}&{0}&{0}&{0}&{0}&{\lambda+1}&{0}&{0}&\cdots&{0}\\
                {-1}& {-2}&{-1}&{-2}&{-1}&{-1}& {-2}&{-1}&{\lambda}&0&\cdots&{0}\\
                 0 &0&{0}&{0}&{0}&{0}&{0}&{0}&{0}&{\lambda}&\cdots&{0}\\
                \vdots & \vdots & \vdots & \vdots & \vdots & \vdots & \vdots& \vdots & \vdots  & \vdots & \ddots & \vdots\\
                 0 &0&{0}&{0}&{0}&{0}&{0}&{0}&{0}&{0}&\cdots&{\lambda}
        \end{array}} \right|_{n \times n}}.$$

Then by Laplace expansion along Rows $i$ $(3\leq i\leq
n,i\neq4,6,7,9),$

$\chi(G,\lambda)={(\lambda  + 1)\lambda ^{s_{3}+1}}{\left|
{\begin{array}{*{20}{c}}
                \lambda &0&0&{ - 1}&{ -2}&{ -s_{3}}\\
                0&\lambda&{ - 2} &{ - 1}&{ - 2}&{ -s_{3}}\\
                0&{ - 2}&\lambda&{ - 1} &{ - 2}&{ -s_{3}}\\
                { - 1}&{ - 2}&{ - 2}&{\lambda }&0&{-s_{3}}\\
                { - 1}&{ - 2}&{ - 2}&0&{\lambda-1 }&{-s_{3}}\\
               { - 1}&{ - 2}&{ - 2}&{-1 }&{ - 2}&{\lambda}
        \end{array}} \right|}$

8. (For the proof of Lemma 2.19(i)) Let $G=(K_1\cup
K_{1,t})\vee(p\circ(K_1\cup K_{1,1}))\vee{\overline
K_{s_{p+2}}}\vee\cdots\vee{\overline K_{s_{k}}}.$ Then $$\chi
(G,\lambda )={\left| {\begin{array}{*{20}{c}}
                 A_{11}& A_{12} \\
                  A_{21} &  A_{22}
                       \end{array}} \right|}$$

where

\begin{displaymath}
A_{11}=\left(
\begin{array}{ccccccccccccc}
                \lambda &0&0&0& \cdots &0&{ - 1}&{ - 1}&{ - 1}&\cdots&{ - 1}&{ - 1}&{ - 1}\\
                0&\lambda & { - 1}& { - 1}& \cdots & { - 1}&{ - 1}& { - 1} &{ - 1}& \cdots&{ - 1}&{ - 1}&{ - 1}\\
                0& { - 1}&\lambda & {0}& \cdots & {0}&{ - 1}& { - 1} &{ - 1}& \cdots&{ - 1}&{ - 1}&{ - 1}\\
                0& { - 1}& {0}&\lambda & \cdots & {0}&{ - 1}& { - 1} &{ - 1}& \cdots&{ - 1}&{ - 1}&{ - 1}\\
                \vdots & \vdots & \vdots & \vdots& \ddots & \vdots & \vdots & \vdots& \vdots &\vdots& \vdots &\vdots& \vdots \\
                0&{ - 1}&0&0& \cdots &\lambda &{ - 1}& { - 1} &{ - 1}& \cdots&{ - 1}&{ - 1}&{ - 1}\\
                { - 1}& { - 1}&{ - 1}&{ - 1}& \cdots &{ - 1}&\lambda&0&0 & \cdots &{ - 1}&{ - 1}&{ - 1}\\
                { - 1}& { - 1}&{ - 1}&{ - 1}& \cdots &{ - 1}&0&\lambda&{ - 1} & \cdots &{ - 1}&{ - 1}&{ - 1}\\
                { - 1}& { - 1}&{ - 1}&{ - 1}& \cdots &{ - 1}&0&{ - 1}&\lambda & \cdots &{ - 1}&{ - 1}&{ - 1}\\
                \vdots & \vdots & \vdots & \vdots& \vdots & \vdots & \vdots& \vdots & \vdots& \ddots  &\vdots& \vdots &\vdots\\
                 { - 1}& { - 1}&{ - 1}&{ - 1}& \cdots &{ - 1}&{ - 1}&{ - 1}&{ - 1}& \cdots&\lambda&0&0 \\
                { - 1}& { - 1}&{ - 1}&{ - 1}& \cdots &{ - 1}&{ - 1}&{ - 1}&{ - 1}& \cdots &0&\lambda&{ - 1}\\
                { - 1}& { - 1}&{ - 1}&{ - 1}& \cdots &{ - 1}&{ - 1}&{ - 1}&{ - 1}&\cdots &0&{ - 1}&\lambda\\
                 \end{array} \right)_{(t+3p+2) \times (t+3p+2)},
\end{displaymath}

\begin{displaymath}
A_{22}=\left(
\begin{array}{ccccccc}
                \lambda& \cdots &0 & \cdots &{ - 1}& \cdots &{ - 1}\\
                 \vdots & \ddots  &\vdots& {}& \vdots & {} & \vdots\\
               0& \cdots&\lambda & \cdots&{ - 1}& \cdots &{ - 1}\\
                \vdots & {} & \vdots & \ddots  &\vdots& {} & \vdots \\
               { - 1}& \cdots &{ - 1}& \cdots &\lambda& \cdots &0\\
                 \vdots & {} & \vdots &{}& \vdots &  \ddots  & \vdots\\
               { - 1}& \cdots &{ - 1}& \cdots &0 & \cdots&\lambda\\
                 \end{array} \right)_{(n-t-3p-2) \times (n-t-3p-2)},
\end{displaymath}

and $A_{ij}$ $(1\leq i,j\leq 2, i\neq j)$ denotes the matrix each of
whose entries is $-1.$

By $C_{3}+\sum\limits_{i=4}^{t+2}C_{i}$, $C_{t+3i+4}+C_{t+3i+5}$
$(0\leq i\leq p-1),$
$C_{t+3p+3}+\sum\limits_{i=t+3p+4}^{t+3p+s_{p+2}+2}C_{i}$,
 $C_{t+3p+s_{p+2}+3}+\sum\limits_{i=t+3p+s_{p+2}+4}^{t+3p+s_{p+2}+s_{p+3}+2}C_{i}$
, $\cdots,$
$C_{t+3p+s_{p+2}+\cdots+s_{k-1}+3}+\sum\limits_{i=t+3p+4+\sum\limits_{j=p+2}^{k-1}s_{j}}^{n}C_{i}$,
 and then by $R_{i}-R_{3}$ $(4\leq i\leq t+2),$
$R_{t+3i+2}-R_{t+3i+1}$ $(1\leq i\leq  p),$ $R_{i}-R_{t+3p+3}$
$(t+3p+4\leq i\leq t+3p+s_{p+2}+2)$, $\cdots$,
$R_{i}-R_{t+3p+s_{p+2}+\cdots+s_{k-1}+3}$
$(t+3p+4+\sum\limits_{j=p+2}^{k-1}s_{j}\leq i\leq  n),$ we get

$$\chi
(G,\lambda )={\left| {\begin{array}{*{20}{c}}
                 B_{11}& B_{12} \\
                  B_{21} &  B_{22}
                       \end{array}} \right|}$$

where

\begin{displaymath}
\footnotesize B_{11}=\left(
\begin{array}{ccccccccccccc}
                \lambda &0&0&0& \cdots &0&{ - 1}&{ - 2}&{ - 1}&\cdots&{ - 1}&{ - 2}&{ - 1}\\
                0&\lambda & { - t}& { - 1}& \cdots & { - 1}&{ - 1}& { - 2} &{ - 1}& \cdots&{ - 1}&{ - 2}&{ - 1}\\
                0& { - 1}&\lambda & {0}& \cdots & {0}&{ - 1}& { - 2} &{ - 1}& \cdots&{ - 1}&{ - 2}&{ - 1}\\
                0& { 0}& {0}&\lambda & \cdots & {0}&{ 0}& { 0} &{ 0}& \cdots&{ 0}&{ 0}&{ 0}\\
                \vdots & \vdots & \vdots & \vdots& \ddots & \vdots & \vdots & \vdots& \vdots &{}& \vdots &\vdots& \vdots\\
                0&{0}&0&0& \cdots &\lambda &{ 0}& { 0} &{0}& \cdots&{ 0}&{ 0}&{ 0}\\
                { - 1}& { - 1}&{ - t}&{ - 1}& \cdots &{ - 1}&\lambda&0&0 & \cdots &{ - 1}&{ - 2}&{ - 1}\\
                { - 1}& { - 1}&{ - t}&{ - 1}& \cdots &{ - 1}&0&{\lambda-1}&{ - 1} & \cdots &{ - 1}&{ - 2}&{ - 1}\\
                {0}& { 0}&{ 0}&{ 0}& \cdots &{ 0}&0&{ 0}&{\lambda+1} & \cdots &{ 0}&{ 0}&{ 0}\\
                \vdots & \vdots & \vdots & \vdots& {}& \vdots & \vdots& \vdots & \vdots& \ddots  &\vdots& \vdots &\vdots\\
                 { - 1}& { - 1}&{ - t}&{ - 1}& \cdots &{ - 1}&{ - 1}&{ - 2}&{ - 1}& \cdots&\lambda&0&0\\
                { - 1}& { - 1}&{ -  t}&{ - 1}& \cdots &{ - 1}&{ - 1}&{ - 2}&{ - 1}& \cdots &0&{\lambda-1}&{ - 1}\\
                { 0}& { 0}&{ 0}&{ 0}& \cdots &{ 0}&{ 0}&{ 0}&{ 0}&\cdots &0&{ 0}&{\lambda+1}\\
                \end{array} \right)_{(t+3p+2) \times (t+3p+2)},
\end{displaymath}

\begin{displaymath}
B_{21}=\left(
\begin{array}{ccccccccccccc}
                 { - 1}& { - 1}&{ -  t}&{ - 1}& \cdots &{ - 1}&{ - 1}&{ - 2}&{ - 1}& \cdots& { - 1}&{ - 2}&{ - 1}\\
                \vdots & \vdots & \vdots & \vdots& {} & \vdots & \vdots & \vdots& \vdots & {}& \vdots &\vdots& \vdots\\
                { 0}& { 0}&{ 0}&{ 0}& \cdots &{ 0}&{ 0}&{ 0}&{ 0}&\cdots & { 0}&{ 0}&{ 0}\\
                \vdots & \vdots & \vdots & \vdots& {} & \vdots & \vdots & \vdots& \vdots &{}& \vdots &\vdots& \vdots \\
                { - 1}& { - 1}&{ -  t}&{ - 1}& \cdots &{ - 1}&{ - 1}&{ - 2}&{ - 1}& \cdots& { - 1}&{ - 2}&{ - 1}\\
                \vdots & \vdots & \vdots & \vdots& {} & \vdots & \vdots & \vdots& \vdots &{}& \vdots &\vdots& \vdots \\
                { 0}& { 0}&{ 0}&{ 0}& \cdots &{ 0}&{ 0}&{ 0}&{ 0}&\cdots & { 0}&{ 0}&{ 0}\\
                \end{array} \right)_{(n-t-3p-2) \times (t+3p+2)},
\end{displaymath}

\begin{displaymath}
B_{12}=\left(
\begin{array}{ccccccc}
                { - s_{p+2}}&\cdots &{ - 1}&\cdots&{ - s_{k}}&\cdots &{ - 1}\\
                { - s_{p+2}}& \cdots &{ - 1}& \cdots&{- s_{k}}& \cdots &{ -1}\\
               { - s_{p+2}}& \cdots &{ - 1}& \cdots&{- s_{k}}& \cdots &{ -1}\\
               { 0}& \cdots &{0}& \cdots&{0}& \cdots &{ 0}\\
               \vdots &{}& \vdots & {} & \vdots &{}& \vdots \\
              { 0}& \cdots &{0}& \cdots&{ 0}& \cdots &{0}\\
               { -s_{p+2}}& \cdots &{ - 1}& \cdots&{ - s_{k}}& \cdots &{ -1}\\
               {  -s_{p+2}}& \cdots &{ - 1}& \cdots&{ - s_{k}}& \cdots &{ -1}\\
              { 0}& \cdots &{ 0}& \cdots&{ 0}& \cdots &{ 0}\\
                \vdots & {} &\vdots& {} &\vdots & {}& \vdots\\
                {-s_{p+2}}& \cdots &{ - 1}& \cdots&{ - s_{k}}& \cdots &{ -1}\\
                { -s_{p+2}}& \cdots &{ - 1}& \cdots&{ - s_{k}}& \cdots &{ -1}\\
               { 0}& \cdots &{ 0}& \cdots&{ 0}& \cdots &{ 0}\\
                \end{array} \right)_{(t+3p+2) \times (n-t-3p-2)},
\end{displaymath}

\begin{displaymath}
B_{22}=\left(
\begin{array}{ccccccc}
                \lambda& \cdots &0 & \cdots &{ -s_{k}}& \cdots &{ - 1}\\
                 \vdots & \ddots  &\vdots& {} & \vdots & {} & \vdots\\
               0& \cdots&\lambda & \cdots&{ 0}& \cdots &{ 0}\\
                \vdots & {} & \vdots & \ddots  &\vdots& {} & \vdots \\
               { - s_{p+2}}& \cdots &{ - 1}& \cdots &\lambda& \cdots &{ 0}\\
                 \vdots & {} & \vdots &{}& \vdots &  \ddots  & \vdots\\
               { 0}& \cdots &{ 0}& \cdots &0 & \cdots&\lambda\\
                 \end{array} \right)_{(n-t-3p-2) \times (n-t-3p-2)},
\end{displaymath}

Then by Laplace expansion along Rows $i$ $(4\leq i\leq t+3p+2, i\neq
t+3l+3,t+3l+4, 0\leq l\leq p-1)$ and Rows $i$ $(t+3p+4\leq i \leq n,
i\neq t+3p+3, t+3p+s_{p+2}+3,\cdots,
t+3p+\sum\limits_{j=p+2}^{k-1}s_{j}+3),$ we have
 $$\chi(G,\lambda)=\lambda^{\eta +t-1}(\lambda+1)^{p}Q(\lambda),$$
where $\eta=\sum\limits_{i=p+2}^{k} s_i-k+p+1$ and

$Q(\lambda)=$
$${\left| {\begin{array}{*{20}{c}}
                \lambda &0&0&{ - 1}&{ - 2}&{ - 1}&{ - 2}&\cdots&{ - 1}&{ - 2}&{ - s_{p+2}}&\cdots&{ - s_{k}}\\
                0&\lambda & { - t}& { - 1}& { - 2}&{ - 1}& { - 2} & \cdots&{ - 1}&{ - 2}&{ - s_{p+2}}& \cdots&{- s_{k}}\\
                0& { - 1}&\lambda &{ - 1}&{ - 2}&{ - 1}& { - 2}& \cdots&{ - 1}&{ - 2}&{ - s_{p+2}}& \cdots&{- s_{k}}\\
                { - 1}& { - 1}&{ - t}&\lambda&0 &{ - 1}&{ - 2}& \cdots &{ - 1}&{ - 2}&{ -s_{p+2}}& \cdots&{ - s_{k}}\\
                { - 1}& { - 1}&{ - t}&0&{\lambda-1}&{ - 1}&{ - 2} & \cdots &{ - 1}&{ - 2}&{  -s_{p+2}}& \cdots&{ - s_{k}}\\
                { - 1}& { - 1}&{ - t}&{ - 1} &{ - 2}&{ \lambda}&{ 0}& \cdots&{ - 1}&{-2}  &{-s_{p+2}}& \cdots&{ - s_{k}}\\
                { - 1}& { - 1}&{ -  t}&{ - 1} &{ - 2}&{ 0}&{ \lambda-1}& \cdots &{ - 1}&{-2}&{ -s_{p+2}}& \cdots&{ - s_{k}}\\
                \vdots & \vdots & \vdots & \vdots & \vdots & \vdots & \vdots& \ddots& \vdots &\vdots& \vdots & {}  &\vdots \\
                { - 1}& { - 1}&{ -  t}&{ - 1}&{ - 2}&{ - 1}&{ - 2}& \cdots& { \lambda}&{ 0}&{ -s_{p+2}}& \cdots &{ - s_{k}}\\
                { - 1}& { - 1}&{ -  t}&{ - 1} &{ - 2}&{ - 1}&{ - 2}& \cdots& { 0}&{\lambda - 1}&{ -s_{p+2}}& \cdots &{ - s_{k}}\\
                { - 1}& { - 1}&{ -  t}&{ - 1} &{ - 2}&{ - 1}&{ - 2}& \cdots& { - 1}&{ - 2}&{\lambda}& \cdots &{ - s_{k}}\\
                \vdots & \vdots & \vdots & \vdots & \vdots & \vdots & \vdots& {}& \vdots &\vdots& \vdots & \ddots  &\vdots \\
                { - 1}& { - 1}&{ -  t}&{ - 1} &{ -2}&{ - 1}&{ - 2}& \cdots& { - 1}&{ - 2}&{ -s_{p+2}}& \cdots &{\lambda}
        \end{array}} \right|}$$
In fact $Q(\lambda)=$
$${\left| {\begin{array}{*{20}{c}}
                {  1}&0 &0&0&{0}&{ 0}&{ 0}&{ 0}&\cdots&{ 0}&{0}&{ 0}&\cdots&{0}\\
                {  1}&\lambda &0&0&{ - 1}&{ - 2}&{ - 1}&{ - 2}&\cdots&{ - 1}&{ - 2}&{ - s_{p+2}}&\cdots&{ - s_{k}}\\
                {  1}&0&\lambda & { - t}& { - 1}& { - 2}&{ - 1}& { - 2} & \cdots&{ - 1}&{ - 2}&{ - s_{p+2}}& \cdots&{- s_{k}}\\
                {  1}&0& { - 1}&\lambda &{ - 1}&{ - 2}&{ - 1}& { - 2}& \cdots&{ - 1}&{ - 2}&{ - s_{p+2}}& \cdots&{- s_{k}}\\
                {  1}&{ - 1}& { - 1}&{ - t}&\lambda&0 &{ - 1}&{ - 2}& \cdots &{ - 1}&{ - 2}&{ -s_{p+2}}& \cdots&{ - s_{k}}\\
                {  1}&{ - 1}& { - 1}&{ - t}&0&{\lambda-1}&{ - 1}&{ - 2} & \cdots &{ - 1}&{ - 2}&{  -s_{p+2}}& \cdots&{ - s_{k}}\\
                {  1}&{ - 1}& { - 1}&{ - t}&{ - 1} &{ - 2}&{ \lambda}&{ 0}& \cdots&{ - 1}&{-2}  &{-s_{p+2}}& \cdots&{ - s_{k}}\\
                {  1}&{ - 1}& { - 1}&{ -  t}&{ - 1} &{ - 2}&{ 0}&{ \lambda-1}& \cdots &{ - 1}&{-2}&{ -s_{p+2}}& \cdots&{ - s_{k}}\\
                {  \vdots}&\vdots & \vdots & \vdots & \vdots & \vdots & \vdots & \vdots& \ddots& \vdots &\vdots& \vdots & {}  &\vdots \\
                {  1}&{ - 1}& { - 1}&{ -  t}&{ - 1}&{ - 2}&{ - 1}&{ - 2}& \cdots& { \lambda}&{ 0}&{ -s_{p+2}}& \cdots &{ - s_{k}}\\
                {  1}&{ - 1}& { - 1}&{ -  t}&{ - 1} &{ - 2}&{ - 1}&{ - 2}& \cdots& { 0}&{\lambda - 1}&{ -s_{p+2}}& \cdots &{ - s_{k}}\\
                {  1}&{ - 1}& { - 1}&{ -  t}&{ - 1} &{ - 2}&{ - 1}&{ - 2}& \cdots& { - 1}&{ - 2}&{\lambda}& \cdots &{ - s_{k}}\\
                {  \vdots}&\vdots & \vdots & \vdots & \vdots & \vdots & \vdots & \vdots& {}& \vdots &\vdots& \vdots & \ddots  &\vdots \\
                {  1}&{ - 1}& { - 1}&{ -  t}&{ - 1} &{ -2}&{ - 1}&{ - 2}& \cdots& { - 1}&{ - 2}&{ -s_{p+2}}& \cdots &{\lambda}
        \end{array}} \right|}$$

By $C_{i}+C_{1}$ $(i=2,3),$
 $C_{4}+tC_{1},$
 $C_{2l+3}+C_{1}$ $(1\leq l\leq p),$ $C_{2l+4}+2C_{1}$
$(1\leq l\leq p)$ and $C_{i}+s_{i-(p+3)}C_{1}$ $(2p+5\leq i\leq
p+k+3)$, the determinant becomes

$$\footnotesize{\left| {\begin{array}{*{20}{c}}
                {  1}&1 &1&t&{1}&{ 2}&{ 1}&{ 2}&\cdots&{ 1}&{2}&{s_{p+2}}&\cdots&{s_{k}}\\
                {  1}&\lambda+1 &1&t&{ 0}&{ 0}&{ 0}&{ 0}&\cdots&{ 0}&{ 0}&{ 0}&\cdots&{ 0}\\
                {  1}&1&\lambda+1 & { 0}& { 0}& { 0}&{ 0}& { 0} & \cdots&{ 0}&{0}&{ 0}& \cdots&{0}\\
                {  1}&1& { 0}&\lambda+t &{ 0}&{ 0}&{ 0}& { 0}& \cdots&{ 0}&{ 0}&{ 0}& \cdots&{0}\\
                {  1}&{ 0}& {0}&{ 0}&\lambda+1&2 &{ 0}&{ 0}& \cdots &{ 0}&{ 0}&{0}& \cdots&{ 0}\\
                {  1}&{ 0}& { 0}&{0}&1&{\lambda+1}&{ 0}&{ 0} & \cdots &{ 0}&{ 0}&{ 0}& \cdots&{ 0}\\
                {  1}&{ 0}& {0}&{ 0}&{ 0} &{ 0}&{ \lambda+1}&{2}& \cdots&{ 0}&{0}  &{0}& \cdots&{ 0}\\
                {  1}&{ 0}& { 0}&{ 0}&{ 0} &{ 0}&{ 1}&{ \lambda+1}& \cdots &{ 0}&{0}&{ 0}& \cdots&{ 0}\\
                \vdots&\vdots & \vdots & \vdots & \vdots & \vdots & \vdots & \vdots& \ddots& \vdots &\vdots& \vdots & {}  &\vdots \\
                {  1}&{ 0}& { 0}&{ 0}&{ 0}&{ 0}&{ 0}&{ 0}& \cdots& { \lambda+1}&{2}&{ 0}& \cdots &{ 0}\\
                {  1}&{ 0}& {0}&{ 0}&{0} &{ 0}&{ 0}&{ 0}& \cdots& { 1}&{\lambda+ 1}&{ 0}& \cdots &{ 0}\\
                {  1}&{ 0}& {0}&{ 0}&{ 0} &{ 0}&{ 0}&{ 0}& \cdots& { 0}&{ 0}&{\lambda+s_{p+2}}& \cdots &{ 0}\\
                \vdots&\vdots & \vdots & \vdots & \vdots & \vdots & \vdots & \vdots&  {} & \vdots &\vdots& \vdots & \ddots  &\vdots \\
                {  1}&{ 0}& { 0}&{ 0}&{ 0} &{ 0}&{ 0}&{0}& \cdots& { 0}&{ 0}&{ 0}& \cdots &{\lambda+s_{k}}
        \end{array}} \right|}$$

By $C_{5+2l}-C_{5}$ $(1\leq l\leq p-1),$ $C_{6+2l}-C_{6}$ $(1\leq
l\leq p-1),$ row operations $R_{5}+R_{5+2l}$ $(1\leq l\leq p-1),$
$R_{6}+R_{6+2l}$ $(1\leq l\leq p-1),$
$R_{1}-\sum\limits_{i=p+2}^{k}\frac{s_{i}}{\lambda+s_{i}}R_{i+p+3}$
$( \lambda\neq -s_{i})$ and Laplace expansion, we get $Q(\lambda)=$
$$\left| {\begin{array}{*{6}{c}}
                1-\sum\limits_{i=p+2}^k\frac{s_i}{\lambda+s_i} &1&1&t&1&2\\
                1&\lambda+1 &1&t&0&0\\
                1&1&\lambda+1 &0&0&0\\
                1&1&0&\lambda+t &0&0\\
                p&0&0&0&\lambda+1&2\\
                p&0&0&0&1&\lambda+1
        \end{array}} \right|
\left| {\begin{array}{*{2}{c}}
                \lambda+1&2\\
                1&\lambda+1
        \end{array}} \right|^{p-1}\prod_{j=p+2}^k(\lambda+s_j).$$

9. (For the proof of Lemma 2.19(ii)) Let $G=(K_1\cup K_{1,3})\vee
(K_1\cup K_{1,2})\vee{\overline K_{s_3}}\vee\cdots\vee{\overline
K_{s_{k}}}$. Then $\chi(G,\lambda)=$
$$\footnotesize{\left| {\begin{array}{*{20}{c}}
                \lambda &0&0& 0 &0&{ - 1}&{ - 1}& { - 1} &{ - 1}&{ - 1}&{ - 1}& \cdots &{ - 1}& \cdots &{ - 1}&{ - 1}& \cdots&{ - 1}\\
                0&\lambda &{ - 1}& { - 1} &{ - 1}&{ { - 1}}&{ - 1}& { - 1} &{ - 1}&{ - 1}&{ - 1}& \cdots &{ - 1}& \cdots &{ - 1} &{ - 1}& \cdots&{ - 1}\\
                0&{ - 1}&\lambda & 0 &0&{ - 1}&{ - 1}& { - 1} &{ - 1}&{ - 1}&{ - 1}& \cdots &{ - 1}& \cdots &{ - 1} &{ - 1}& \cdots &{ - 1}\\
                0 & { - 1} & 0 & \lambda & 0 & { - 1} & { - 1} &  { - 1}&  { - 1}  &{ - 1}&{ - 1}& \cdots &{ - 1}& \cdots &{ - 1}&{ - 1}& \cdots&{ - 1}\\
                0&{ - 1}&0&0 &\lambda &{ - 1}&{ - 1}& { - 1} &{ - 1}&{ - 1}&{ - 1}& \cdots &{ - 1}& \cdots &{ - 1}&{ - 1}& \cdots&{ - 1}\\
                { - 1}&{ - 1} &{ - 1}& { - 1} &{ - 1}&{ \lambda}&{ 0}& 0 &{ 0}&{ - 1}&{ - 1}& \cdots &{ - 1}& \cdots &{ - 1}&{ - 1}& \cdots&{ - 1}\\
                { - 1}&{ - 1}&{ - 1} & { - 1} &{ - 1}&0 &\lambda & { - 1} &{ - 1}&{ - 1}&{ - 1}& \cdots &{ - 1}& \cdots &{ - 1}&{ - 1}& \cdots&{ - 1}\\
                { - 1} & { - 1} & { - 1} & { - 1}& { - 1} & 0 &{ - 1} & { \lambda} & 0 &{ - 1}&{ - 1}& \cdots &{ - 1}& \cdots &{ - 1}&{ - 1}& \cdots&{ - 1}\\
                { - 1}&{ - 1}&{ - 1}&{ - 1} &{ - 1} &0&{ - 1}& 0 & \lambda &{ - 1}&{ - 1}& \cdots &{ - 1}& \cdots &{ - 1}&{ - 1}& \cdots&{ - 1}\\
                { -1}&{ - 1} &{ - 1}&{ - 1} &{ - 1}&{ - 1}&{- 1}& { - 1} &{ - 1}&{ \lambda}&0& \cdots &{ 0}& \cdots  &{ - 1}&{ - 1} & \cdots  &{ - 1}\\
                { -1}&{ - 1} &{ - 1}&{ - 1} &{ - 1}&{ - 1}&{- 1}& { - 1} &{ - 1}&0&{ \lambda}& \cdots&{ 0} & \cdots &{ - 1}&{ - 1}& \cdots  &{ - 1}\\
                \vdots & \vdots & \vdots & \vdots  & \vdots & \vdots &\vdots & \vdots & \vdots& \vdots & \vdots & \ddots  & \vdots & {} & \vdots & \vdots &{}& \vdots \\
                { -1}&{ -1}&{ -1}& { -1} &{ -1} &{ -1}&{ -1}& { -1} & { -1} &0&0& \cdots & \lambda& \cdots &{ -1}&{ -1}&\cdots&{ -1}\\
                \vdots & \vdots & \vdots & \vdots & \vdots & \vdots &\vdots & \vdots & \vdots & \vdots& \vdots & {} & \vdots & \ddots & \vdots & \vdots &{}& \vdots \\
                { -1}&{ -1} &{ - 1}& { -1} &{ - 1}&{ -1}&{- 1}& { -1} &{ - 1}&{ -1}&{-1}&\cdots &{-1}& \cdots &{ \lambda} &{ 0}& \cdots&{ 0}\\
                { -1}&{ -1}&{ -1}& { -1} &{ -1}&{ -1} &{ -1}& { -1} &{ -1}&{ -1}&{ -1}& \cdots &{ -1}& \cdots &0 &\lambda& \cdots &0\\
                \vdots & \vdots & \vdots & \vdots & \vdots & \vdots &\vdots & \vdots & \vdots & \vdots& \vdots & {} & \vdots & {} & \vdots & \vdots &\ddots& \vdots \\
                { -1}&{ -1}&{ -1}& { -1} &{ -1} &{ -1}&{ -1}& { -1} & { -1} &{ -1}&{ -1}& \cdots & { -1}& \cdots &0&0 &\cdots& \lambda\\
        \end{array}} \right|_{n \times n}}$$

By $C_{3}+\sum\limits_{i=4}^{5}C_{i}$, $C_{8}+C_{9},$
$C_{10}+\sum\limits_{i=11}^{s_{3}+9}C_{i}$,
$C_{s_{3}+10}+\sum\limits_{i=s_{3}+11}^{s_{3}+s_{4}+9}C_{i}$,
 $\cdots,$
$C_{s_{3}+\cdots+s_{k-1}+10}+\sum\limits_{i=11+\sum\limits_{j=3}^{k-1}s_{j}}^{n}C_{i}$,
 $R_{i}-R_{3}$
$(i=4,5),$ $R_{9}-R_{8},$ $R_{i}-R_{10}$ $(11\leq i\leq s_{3}+9)$,
$R_{i}-R_{s_{3}+10}$ $(s_{3}+11\leq i\leq s_{3}+s_{4}+9)$,$\cdots$,
$R_{i}-R_{s_{3}+\cdots+s_{k-1}+10}$
$(11+\sum\limits_{j=3}^{k-1}s_{j}\leq i\leq n)$ and then by Laplace
expansion, we get $\chi(G,\lambda)=\lambda^{\xi+3}R_{1}(\lambda),$
where $\xi=\sum\limits_{i=3}^{k}s_i-k+2$ and
$$R_{1}(\lambda)={\left| {\begin{array}{*{20}{c}}
                \lambda &0&0&{ - 1} &{ - 1}&{ - 2}&{ - s_{3}}&{ -s_{4}}& \cdots&{ -s_{k}}\\
                0&\lambda &{ - 3}& { - 1} &{ - 1}&{ -2}&{ - s_{3}}&{ -s_{4}}& \cdots&{ -s_{k}}\\
                0&{ - 1}&\lambda & { - 1} &{ - 1}&{ -2}&{ - s_{3}}&{ -s_{4}}& \cdots&{ -s_{k}}\\
                { - 1} & { - 1} & { - 3} & \lambda & 0 & { 0} &{ - s_{3}}&{ -s_{4}}& \cdots&{ -s_{k}}\\
                { - 1}&{ - 1}&{ -3}&0 &\lambda &{ - 2}&{ - s_{3}}&{ -s_{4}}& \cdots&{ -s_{k}}\\
                { - 1}&{ - 1} &{ -3}& 0 &{ - 1}&{ \lambda}&{ - s_{3}}&{ -s_{4}}& \cdots&{ -s_{k}}\\
                { - 1}&{ - 1}&{ - 3} & { - 1} &{ - 1}&{ - 2} &{ \lambda}&{ -s_{4}}& \cdots&{ -s_{k}}\\
                { - 1} & { - 1} & { - 3} & { - 1}& { - 1} & { - 2}  &{ - s_{3}}&{ \lambda}& \cdots&{ -s_{k}}\\
                \vdots & \vdots & \vdots & \vdots  & \vdots & \vdots  & \vdots &\vdots&\ddots& \vdots\\
                { -1}&{ - 1} &{ - 3}&{ - 1} &{ - 1}&{ - 2}&{ - s_{3}}&{ -s_{4}}& \cdots&{\lambda}
        \end{array}} \right|_{(k+4) \times (k+4)}}.$$

In fact
$$R_{1}(\lambda)={\left| {\begin{array}{*{20}{c}}
                1 &0&0&0&0 &0&0&0&0& \cdots&0\\
                1 &\lambda &0&0&{ - 1} &{ - 1}&{ - 2}&{ - s_{3}}&{ -s_{4}}& \cdots&{ -s_{k}}\\
                1 &0&\lambda &{ - 3}& { - 1} &{ - 1}&{ -2}&{ - s_{3}}&{ -s_{4}}& \cdots&{ -s_{k}}\\
                1 &0&{ - 1}&\lambda & { - 1} &{ - 1}&{ -2}&{ - s_{3}}&{ -s_{4}}& \cdots&{ -s_{k}}\\
                1 &{ - 1} & { - 1} & { - 3} & \lambda & 0 & { 0} &{ - s_{3}}&{ -s_{4}}& \cdots&{ -s_{k}}\\
                1 &{ - 1}&{ - 1}&{ -3}&0 &\lambda &{ - 2}&{ - s_{3}}&{ -s_{4}}& \cdots&{ -s_{k}}\\
                1 &{ - 1}&{ - 1} &{ -3}& 0 &{ - 1}&{ \lambda}&{ - s_{3}}&{ -s_{4}}& \cdots&{ -s_{k}}\\
                1 &{ - 1}&{ - 1}&{ - 3} & { - 1} &{ - 1}&{ - 2} &{ \lambda}&{ -s_{4}}& \cdots&{ -s_{k}}\\
                1 &{ - 1} & { - 1} & { - 3} & { - 1}& { - 1} & { - 2}  &{ - s_{3}}&{ \lambda}& \cdots&{ -s_{k}}\\
                \vdots &\vdots & \vdots & \vdots & \vdots  & \vdots & \vdots  & \vdots &\vdots&\ddots& \vdots\\
                1 &{ -1}&{ - 1} &{ - 3}&{ - 1} &{ - 1}&{ - 2}&{ - s_{3}}&{ -s_{4}}& \cdots&{\lambda}
        \end{array}} \right|_{(k+5) \times (k+5)}}$$

Similar to Appendix 7, by $C_{i}+C_{1}$
$(2\leq i\leq 6, i\neq 4),$ $C_{4}+3C_{1},$
 $C_{7}+2C_{1},$
 $C_{l+5}+s_{l}C_{1}$ $(3\leq l\leq k),$
 $R_{1}-\sum\limits_{l=3}^{k}\frac{s_{l}}{\lambda+s_{l}}R_{l+5}$
$(\lambda\neq-s_{l})$ and then by Laplace expansion, we get
$$R_{1}(\lambda)=\prod_{i=3}^k(\lambda+s_i)R(\lambda),$$
and

$$R(\lambda)=\left| {\begin{array}{*{7}{c}}
                1-\sum\limits_{i=3}^k\frac{s_i}{\lambda+s_i} &1&1&3&1&1&2\\
                1&\lambda+1 &1&3&0&0&0\\
                1&1&\lambda+1 &0&0&0&0\\
                1&1&0&\lambda+3 &0&0&0\\
                1&0&0&0&\lambda+1&1&2\\
                1&0&0&0&1&\lambda+1&0\\
                1&0&0&0&1&0&\lambda+2
        \end{array}} \right|.$$

Thus
$$\chi(G,\lambda)=\lambda^{\xi+3}\prod_{i=3}^k(\lambda+s_i)R(\lambda),$$
where $\xi=\sum\limits_{i=3}^{k}s_i-k+2.$

10. (For the proof of Lemma 2.19(iii)) Let $G=(K_1\cup K_{1,3})\vee
(K_1\cup K_{1,2})\vee (K_1\cup K_{1,1})\vee{\overline K_{s_4}}.$
Then $\chi(G,\lambda)=$
$${\left| {\begin{array}{*{20}{c}}
                \lambda &0&0& 0 &0&{ - 1}&{ - 1}& { - 1} &{ - 1}&{ - 1}&{ - 1}&{ - 1}&{ - 1}& \cdots &{ - 1}\\
                0&\lambda &{ - 1}& { - 1} &{ - 1}&{ { - 1}}&{ - 1}& { - 1} &{ - 1}&{ - 1}&{ - 1}&{ - 1}&{ - 1}& \cdots &{ - 1}\\
                0&{ - 1}&\lambda & 0 &0&{ - 1}&{ - 1}& { - 1} &{ - 1}&{ - 1}&{ - 1}&{ - 1}&{ - 1}& \cdots &{ - 1}\\
                0 & { - 1} & 0 & \lambda & 0 & { - 1} & { - 1} &  { - 1}&  { - 1}  &{ - 1}&{ - 1}&{ - 1}&{ - 1}& \cdots &{ - 1}\\
                0&{ - 1}&0&0 &\lambda &{ - 1}&{ - 1}& { - 1} &{ - 1}&{ - 1}&{ - 1}&{ - 1}&{ - 1}& \cdots &{ - 1}\\
                { - 1}&{ - 1} &{ - 1}& { - 1} &{ - 1}&{ \lambda}&{ 0}& 0 &{ 0}&{ - 1}&{ - 1}&{ - 1}&{ - 1}& \cdots &{ - 1}\\
                { - 1}&{ - 1}&{ - 1} & { - 1} &{ - 1}&0 &\lambda & { - 1} &{ - 1}&{ - 1}&{ - 1}&{ - 1}&{ - 1}& \cdots &{ - 1}\\
                { - 1} & { - 1} & { - 1} & { - 1}& { - 1} & 0 &{ - 1} & { \lambda} & 0 &{ - 1}&{ - 1}&{ - 1}&{ - 1}& \cdots &{ - 1}\\
                { - 1}&{ - 1}&{ - 1}&{ - 1} &{ - 1} &0&{ - 1}& 0 & \lambda &{ - 1}&{ - 1}&{ - 1}&{ - 1}& \cdots &{ - 1}\\
                { - 1}&{ - 1} &{ - 1}& { - 1} &{ - 1}&{ - 1}&{ - 1}&{ - 1}&{ - 1}&{ \lambda}&{ 0}& 0 &{ -1}& \cdots &{ - 1}\\
                { - 1}&{ - 1}&{ - 1} & { - 1} &{ - 1}&{ - 1}&{ - 1}&{ - 1}&{ - 1}&0 &{\lambda} & { - 1} &{ - 1}& \cdots &{ - 1}\\
                { - 1} & { - 1} & { - 1} & { - 1}& { - 1} &{ - 1}&{ - 1}&{ - 1}&{ - 1}& 0 &{ - 1} & { \lambda} &{ - 1} & \cdots &{ - 1}\\
                { - 1}&{ - 1}&{ - 1}&{ - 1} &{ - 1} &-1&{ - 1}&-1 & { - 1} &{ - 1}&{ - 1}&{ - 1}&{\lambda}& \cdots &{ 0}\\
                \vdots & \vdots & \vdots & \vdots  & \vdots & \vdots &\vdots & \vdots & \vdots & \vdots & \vdots & \vdots & \vdots & \ddots  & \vdots \\
                { -1}&{ -1}&{ -1}& { -1} &{ -1} &{ -1}&{ -1}& { -1} &{ - 1}&{ - 1}& { -1} &{ - 1}&0& \cdots & \lambda\\
                       \end{array}} \right|_{n \times n}}$$

By $C_{3}+\sum\limits_{i=4}^{5}C_{i}$, $C_{8}+C_{9},$
$C_{11}+C_{12},$ $C_{13}+\sum\limits_{i=14}^{n}C_{i}$, $\cdots,$
$R_{i}-R_{3}$ $(i=4,5),$ $R_{9}-R_{8},$ $R_{12}-R_{11}$,
$R_{i}-R_{13}$ $(14\leq i\leq n)$ and Laplace expansion, we have
$$\chi(G,\lambda)=\lambda^{s_{4}+2}(\lambda+1)S(\lambda),$$

where $$S(\lambda)={\left| {\begin{array}{*{20}{c}}
                \lambda &0&0&{ - 1} &{ - 1}&{ - 2}&{ -1}&{ -2}&{ -s_{4}}\\
                0&\lambda &{ - 3}& { - 1} &{ - 1}&{ -2}&{ - 1}&{ -2}&{ -s_{4}}\\
                0&{ - 1}&\lambda & { - 1} &{ - 1}&{ -2}&{ -1}&{ -2}&{ -s_{4}}\\
                { - 1} & { - 1} & { - 3} & \lambda & 0 & { 0} &{ -1}&{ -2}&{ -s_{4}}\\
                { - 1}&{ - 1}&{ -3}&0 &\lambda &{ - 2}&{ -1}&{ -2}&{ -s_{4}}\\
                { - 1}&{ - 1} &{ -3}& 0 &{ - 1}&{ \lambda}&{ - 1}&{ -2}&{ -s_{4}}\\
                { - 1}&{ - 1}&{ - 3} & { - 1} &{ - 1}&{ - 2} &{ \lambda}&{ 0}&{ -s_{4}}\\
                { - 1} & { - 1} & { - 3} & { - 1}& { - 1} & { - 2}  &{ 0}&{ \lambda-1}&{ -s_{4}}\\
                { -1}&{ - 1} &{ - 3}&{ - 1} &{ - 1}&{ - 2}&{ -1}&{ -2}&{\lambda}
        \end{array}} \right|}$$
$$={\left| {\begin{array}{*{20}{c}}
                {  1}&0 &0&0&0 &0&0&0&0&0\\
                {  1}&\lambda &0&0&{ - 1} &{ - 1}&{ - 2}&{ -1}&{ -2}&{ -s_{4}}\\
                {  1}&0&\lambda &{ - 3}& { - 1} &{ - 1}&{ -2}&{ - 1}&{ -2}&{ -s_{4}}\\
                {  1}&0&{ - 1}&\lambda & { - 1} &{ - 1}&{ -2}&{ -1}&{ -2}&{ -s_{4}}\\
                {  1}&{ - 1} & { - 1} & { - 3} & \lambda & 0 & { 0} &{ -1}&{ -2}&{ -s_{4}}\\
                {  1}&{ - 1}&{ - 1}&{ -3}&0 &\lambda &{ - 2}&{ -1}&{ -2}&{ -s_{4}}\\
                {  1}&{ - 1}&{ - 1} &{ -3}& 0 &{ - 1}&{ \lambda}&{ - 1}&{ -2}&{ -s_{4}}\\
                {  1}&{ - 1}&{ - 1}&{ - 3} & { - 1} &{ - 1}&{ - 2} &{ \lambda}&{ 0}&{ -s_{4}}\\
                {  1}&{ - 1} & { - 1} & { - 3} & { - 1}& { - 1} & { - 2}  &{ 0}&{ \lambda-1}&{ -s_{4}}\\
                {  1}&{ -1}&{ - 1} &{ - 3}&{ - 1} &{ - 1}&{ - 2}&{ -1}&{ -2}&{\lambda}
        \end{array}} \right|}$$

$$=\left| {\begin{array}{*{10}{c}}
                1&0&0&0&0&0&0&0&0&s_4\\
                1&\lambda&0&0&-1&-1&-2&-1&-2&0\\
                1&0&\lambda &-3&-1&-1&-2&-1&-2&0\\
                1&0&-1&\lambda &-1&-1&-2&-1&-2&0\\
                1&-1&-1&-3&\lambda&0&0&-1&-2&0\\
                1&-1&-1&-3&0&\lambda&-2&-1&-2&0\\
                1&-1&-1&-3&0&-1&\lambda &-1&-2&0\\
                1&-1&-1&-3&-1&-1&-2&\lambda &0&0\\
                1&-1&-1&-3&-1&-1&-2&0&\lambda-1 &0\\
                1&-1&-1&-3&-1&-1&-2&-1&-2&\lambda+s_4
        \end{array}} \right|.$$


\begin{thebibliography}{9999}
\bibitem{A2} N. Alon, Eigenvalues and expanders, Combinatorica, 6 (1986) 83-96.

\bibitem{CGSS} P.J. Cameron, J.M. Goethals, J.J. Seidel, E.E. Shult, Line graphs, root systems and elliptic geometry, J. Algebra, 43 (1976)
305-327.


\bibitem{CH1} D. Cao, Y. Hong,
Graphs characterized by the second eigenvalue, J. Graph Theory, 17
(3) (1993) 325-331.

\bibitem{CH2} D. Cao, Y. Hong, The distribution of eigenvalues of
graphs, Linear Algebra Appl., 216 (1995) 211-224.


\bibitem{CH3} G. Chen, Y. Hao,
Multiplicity of the second-largest eigenvalue of a planar graph, J.
Graph Theory, 98 (3) (2021) 445-459.

\bibitem{CGK} X. Cheng, G.R.W. Greaves, J.H. Koolen, Graphs with three
eigenvalues and second largest eigenvalue at most $1$, J. Combin.
Theory Ser. B, 129 (2018) 55-78.




\bibitem{C2} D. Cvetkovi\'c, On graphs whose second largest eigenvalue does not exceed $1$, Publ. Inst. Math., (Beograd) 31 (1982)
15-20.



\bibitem{CS} D. Cvetkovi\'c, S. Simi\'c, On graphs whose second largest eigenvalue does not exceed $(\sqrt{5}-1)/2$, Discrete Math., 138 (1995) 213-227.

\bibitem{CS2} D. Cvetkovi\'c, S. Simi\'c, The second largest eigenvalue of a graph (A survey), FILOMAT (Ni\u{s}),
9:3 (1995), Proc. Conf. on Algebra, Logic and Discrete Math.,
Ni\u{s}, 14-16 April 1995, 449-472.

\bibitem{F} S. Fisk, A very short proof of Cauchy's interlace theorem for eigenvalues of Hermitian matrices, Amer. Math. Monthly, 112 (2005)
118.


\bibitem{G} S. Guo, On bicyclic graphs whose second largest eigenvalue does not exceed 1, Linear Algebra Appl., 407 (2005) 201-210.

\bibitem{HY} Y. Hong, Sharp lower bounds on the eigenvalues of trees, Linear Algebra Appl., 113 (1989) 101-105.

\bibitem{H}  L. Howes, On subdominantly bounded graphs-summary of results, in: M. Capobianco, J.B. Frechen
and M. Krolik, eds., Recent Trends in Graph Theory, Proc. of the
First New York City Graph Theory Conf. held on June 11-13, 1970
(Springer, Berlin, 1971) 181-183.

\bibitem{JTYZZ} Z. Jiang, J. Tidor, Y. Yao, S. Zhang, Y. Zhao, Equiangular lines with a fixed angle, Ann. of Math.,
194 (3) (2021) 729-743.

\bibitem{LS} P.W.H. Lemmens, J.J. Seidel, Equiangular lines, J. Algebra, 24 (1973)
494-512.

\bibitem{L}  J. Li, Subdominant eigenvalues of graphs, Thesis Ph.D., University of Manitoba, Canada, 1994.

\bibitem{LY} S. Li, H. Yang, On tricyclic graphs whose second largest eigenvalue does not
exceed 1, Linear Algebra Appl., 434 (2011) 2211-2221.

\bibitem{LCS} M. Liu, C. Chen, Z. Stani\'c, On graphs whose second largest eigenvalue is at most $1$, European J. Combin., 97 (2021) 103385.

\bibitem{NS} A. Neumaier, J.J. Seidel, Discrete hyperbolic geometry,
Combinatorica, 3 (1983) 219-237.


\bibitem{P} M. Petrovi\'c, On graphs with exactly one eigenvalue less than $-1$, J. Combin. Theory Ser. B, 52 (1991) 102-112.


\bibitem{P2}  M. Petrovi\'c, On graphs whose second largest eigenvalue does not exceed $\sqrt{2}-1$, Univ. Beograd. Publ. Elektrotehn. Fak. Ser. Mat., 4 (1993)
73-78.

\bibitem{W} E.S. Wolk,  A note on the comparability graph of a tree, Proc. Amer.
Math. Soc., 16 (1965) 17-20.


\bibitem{X} G. Xu, On unicyclic graphs whose second largest eigenvalue does not exceed 1, Discrete Appl. Math., 136 (2004) 117-124.

\bibitem{ZC} F. Zhang, Z. Chen, Ramsey numbers, graph eigenvalues and a conjecture of Cao and Yuan, Linear Algebra Appl., 458 (2014) 526-533.

\end{thebibliography}
\end{document}